\newcommand{\Z}{\mathbb Z}
\newcommand{\mbb}{\mathbb}
\newcommand{\mc}{\mathcal}
\newcommand{\dual}{\,\check{}}
\newcommand{\RGr}{\mathbb R\mathrm{Gr}}
\newcommand{\Set}{\mathbf{Set}}
\newcommand{\Sm}[1]{\mathbf{Sm}_{#1}}
\newcommand{\Sch}[1]{\mathbf{Sch}_{#1}}
\newcommand{\St}{\mathrm{St}}
\newcommand{\ptr}{\mathrm{ptr}}
\newcommand{\Sp}{\mathbf{Sp}}
\newcommand{\mothom}{\mathbf{H}}
\newcommand{\KR}{\mathbf{KR}^{\mathrm{alg}}}
\newcommand{\BigKR}{\uline{\mathbf{KR}^{\mathrm{alg}}}}
\newcommand{\Quad}{\mathbf{Quad}}
\DeclareMathOperator{\Spec}{Spec}
\DeclareMathOperator*{\colim}{colim}
\DeclareMathOperator*{\im}{im}
\DeclareMathOperator{\Hom}{Hom}
\DeclareMathOperator{\rk}{rk}
\DeclareMathOperator{\can}{can}
\DeclareMathOperator{\iHom}{\mathbf{Hom}}
\DeclareMathOperator{\Ch}{Ch}
\DeclareMathOperator{\Fun}{Fun}
\DeclareMathOperator{\Vect}{Vect}
\DeclareMathOperator{\Cone}{Cone}
\DeclareMathOperator{\sPerf}{sPerf}
\DeclareMathOperator{\KGL}{KGL}
\DeclareMathOperator{\SH}{SH}
\DeclareMathOperator{\HH}{H}
\DeclareMathOperator{\BigSH}{\uline{SH}}
\DeclareMathOperator{\BigHH}{\uline{H}}
\theoremstyle{plain}
\newtheorem{lemma}{Lemma}
\numberwithin{lemma}{section}
\newtheorem{theorem}[lemma]{Theorem}
\theoremstyle{definition}
\newtheorem{corollary}[lemma]{Corollary}
\newtheorem{example}[lemma]{Example}
\newtheorem{definition}[lemma]{Definition}
\newtheorem{notation}[lemma]{Notation}
\newtheorem{remark}[lemma]{Remark}
\newtheorem{porism}[lemma]{Porism}
\title{Cdh Descent for Homotopy Hermitian $K$-Theory of Rings with Involution}
\author{Daniel Carmody}
\begin{document}

\maketitle

\begin{abstract}
We provide a geometric model for the classifying space of 
automorphism groups of Hermitian vector bundles over a ring with
involution $R$ such that $\frac{1}{2} \in R$; this generalizes a
result of Schlichting-Tripathi \cite{SchTri}. We then prove a
periodicity theorem for
Hermitian $K$-theory and use it to construct an $E_\infty$ motivic ring spectrum $\KR$
representing homotopy Hermitian $K$-theory. From these results, we
show that $\KR$ is stable under base change, and cdh descent for
homotopy Hermitian $K$-theory of rings with involution is a formal consequence.
\end{abstract}

\tableofcontents
\section{Introduction}

Algebraic $K$-theory is an algebraic invariant introduced in the 1950s by Alexander
Grothendieck where it served as the cornerstone of his reformulation
of the Riemann-Roch theorem \cite{Gro57}.  Twenty years previously, Ernst Witt
developed the notion of quadratic forms over arbitrary fields and
introduced the Witt ring as an object to encapsulate the nature of
all the quadratic forms over a given field \cite{Witt1937}. Combining
the ideas of Grothendieck and Witt, Hyman Bass introduced a category of quadratic
forms  $\Quad(R)$
with isometries over a ring $R$ and studied $K_1(\Quad(R))$ and $K_0(\Quad(R))$. $K_0(\Quad(R))$ is what we know today as the
Grothendieck-Witt ring, and Bass was able to recover the Witt ring
as a quotient of $K_0(\Quad(R))$ by the image of the hyperbolic
quadratic forms. He went on to show that $K_1(\Quad(R))$ was related to the stable
structure of the automorphisms of hyperbolic modules, which complemented the relationship between $K_1(R)$ and the group $GL(R)$. The $K$-theory
of quadratic forms soon found applications to surgery theory where
the periodic $L$-groups defined by Wall in 1966 \cite{Wall66} served
as obstructions to certain maps being cobordant to homotopy equivalences. When
the means to define the higher algebraic $K$-groups via the $+$ construction was discovered by
Quillen in the 1970s, Karoubi applied it to the orthogonal groups $BO$
in order to define the higher Hermitian $K$-theory of rings with
involution as we know it today\cite{Kar73}.

Fast forward twenty years into the 1990s when Morel and Voevodsky
developed the motivic homotopy category and proved that algebraic
$K$-theory was representable in the stable motivic homotopy category
\cite{MorelVoev}. The development of the stable motivic homotopy
category not only gave a new domain to motivic cohomology, it also
opened the door for applications of topological tools like obstruction
theory to
more algebraic objects. Several
subsequent developments inspire our work here.

The first set of developments relates to Hermitian $K$-theory. In 2005 Hornbostel showed that Hermitian $K$-theory was
representable in the stable motivic homotopy category on schemes
\cite{Hornbos2005}. We note that Hornbostel defined
Hermitian $K$-theory on schemes by extending the definition on rings
using Jouanolou's trick. In 2011 Hu-Kriz-Ormsby showed that Hermitian
$K$-theory on the category of $C_2$-schemes over a field is representable in the
$C_2$-equivariant stable motivic homotopy category \cite{HuKriz}. Here
they used a similar trick to Hornbostel in order to extend Hermitian
$K$-theory from rings with involution to schemes with involution. In
the meantime, Schlichting, building off of work of Thomason,
Karoubi, and Balmer, defined the higher Hermitian $K$-theory of a dg-category
with weak equivalences and duality and proved the analogues of the
fundamental theorems of higher $K$-theory for these groups
\cite{Schder}. Although some of Schlichting's theorems are stated only
for schemes (rather than schemes with $C_2$ action), many of his
proofs require only trivial modification to extend to Grothendieck-Witt
groups of schemes with $C_2$ action. See also \cite{Xie2018ATM} for
the proofs of the equivariant version of some of the theorems together with a new
transfer morphism. Another approach is taken by Hesselholt-Madsen, who
define real algebraic $K$-theory of a category with weak equivalences
and duality as a symmetric spectrum object in the monoidal category of
pointed $C_2$-spaces. Schlichting's higher Grothendieck-Witt groups can be
recovered from the Hesselholt-Madsen construction by taking
homotopy groups of $C_2$-fixed points of deloopings of the real
algebraic $K$-theory spaces with respect to the sign representation spheres. We note 
as well that the Ph.D. thesis of Alejo López-Ávila \cite{LopezHermInfty} shows that 
the motivic spectrum representing Hermitian $K$-theory in the non-equivariant setting 
has an $E_\infty$ structure.

Back in $K$-theory land, Cisinski proved that the six
functor formalism in motivic homotopy theory developed by Ayoub
\cite{Ayoub} together with the fact that the motivic $K$-theory
spectrum $\KGL$ is a cocartesian section of $\SH(-)$ yields a simple
proof of cdh-descent (descent in the completely decomposed h topology) for homotopy $K$-theory \cite{DenRep}. This in
turn yields a short proof of Weibel's vanishing conjecture for homotopy
$K$-theory, and inspired work of Kerz, Strunk, and Tamme who solved
Weibel's conjecture by proving pro-cdh descent for
ordinary $K$-theory \cite{KerzStrunkTamme}. Hoyois in \cite{cdhdesc}
uses Cisinski's approach to show cdh descent for equivariant homotopy $K$-theory.

This paper, inspired by the above developments, shows cdh-descent
for homotopy Hermitian $K$-theory of schemes with $C_2$ action. The
techniques in \cite{cdhdesc} provide our pathway to descent. In order
to show that Hermitian $K$-theory is a cocartesian section of
$\SH^{C_2}(-)$, we need to show that the Hermitian $K$-theory space
$\Omega^\infty GW$ can be represented by a certain Grassmannian, and
we need a periodization theorem in order to pass from the Hermitian
$K$-theory space $\Omega^\infty
GW$ to the homotopy Hermitian $K$-theory motivic spectrum $L_{\mbb
  A^1}\mbb GW$. Schlichting and Tripathi \cite{SchTri} show that
$\Omega^\infty GW$ is representable by a Grassmannian over schemes
with trivial action over a regular base scheme with 2 invertible. Their techniques extend to the equivariant
setting, and with slight modification provide a proof of
representability over non-regular bases. The periodization techniques
in \cite{cdhdesc} extend to Hermitian $K$-theory by
investigating the Hermitian $K$-theory of $T^\rho$, the Thom
space of the regular representation $\mbb A^\rho$.

\subsection{Outline}

Section \ref{chap:Eq_Top} begins with a review of $G$-equivariant motivic
homotopy theory where $G$ is a finite group scheme over a base $S$
which is Noetherian of finite Krull dimension, has an ample
family of line bundles, and has $\frac{1}{2} \in \Gamma(S,\mc O_S)$. First we review the definition of the equivariant
\'etale and Nisnevich topologies, then we introduce the isovariant
\'etale topology and give some examples of covers. For the reader
familiar with non-equivariant motivic homotopy theory, the assumptions
we make on $G$ are strong enough so that structural results are mostly
the same:
\begin{itemize}
\item the equivariant Nisnevich topology is generated by a nice cd-structure,
\item equivariant schemes are locally affine in the equivariant
  Nisnevich topology, and
\item to invert $G$-affine bundles $Y \rightarrow S$ it suffices to
  invert $\mbb A_S^1$.
\end{itemize}

The content in this section is a selection of relevant content from
\cite{GrpSchHell}. We end this section with the definition of the unstable and stable
equivariant motivic $\infty$-categories \ref{sec:cat_def} a la Hoyois
\cite{HoyoisSixOp}.

Section \ref{chap:herm_form_sch} reviews the definitions and results
on Hermitian forms which will be necessary to work with the
Grothendieck-Witt spectrum. Section \ref{sec:herm_def} contains the
basic definitions and examples, while section
\ref{sec:herm_form_semiloc} contains the tools necessary to show that
Hermitian forms are locally determined by rank in the isovariant or equivariant
\'etale topologies. The final section \ref{sec:Higher_GW} reviews the
main definitions of \cite{Schder} to allow us to talk about the
Grothendieck-Witt spectra of schemes with involution. 

Section \ref{chap:herm_grass} is where the background material ends
and the paper begins in earnest. We combine the techniques of \cite{SchTri}
and \cite{cdhdesc} in order to show that classifying spaces of automorphism
groups of Hermitian vectors bundles are representable in the $C_2$-equivariant motivic
homotopy category.

This section
culminates with the representability result, Theorem \ref{thm:RGr_GW_nonreg}, which we note holds
over non-regular base schemes:

\begin{theorem}
Let $S$ be a Noetherian scheme of finite Krull dimension with an ample
family of line bundles and $\frac{1}{2} \in S$. There is an
equivalence of motivic spaces on $\Sm{S,qp}^{C_2}$
\[
L_{\mathrm{mot}}\Z \times \RGr_\bullet \xrightarrow{\sim}
L_{\mathrm{mot}}\Z \times \colim_n B_{isoEt}O(\mbb H^n).
\]
\end{theorem}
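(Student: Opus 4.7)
The plan is to adapt the Schlichting--Tripathi argument to the equivariant setting, with the isovariant étale topology playing the role played by the Zariski topology in the classical proof, and with the semilocal theory of Hermitian forms from Section \ref{sec:herm_form_semiloc} providing the engine for local triviality.

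First, I would construct the comparison map explicitly. The real Grassmannian $\RGr_\bullet$ parametrizes Hermitian direct summands of the standard hyperbolic bundle $\mbb H^N$; a rank-$n$ universal Hermitian subbundle on $\RGr_{n,N}$ has an automorphism-sheaf torsor classified by a map $\RGr_{n,N} \to B_{isoEt}O(\mbb H^n)$, and assembling these as $N \to \infty$ and $n \to \infty$ produces the comparison map of the statement, with the $\Z$-factor on both sides tracking the virtual rank.

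Second, I would check this map is a motivic equivalence by verifying it on stalks. By the structural results of Section \ref{chap:Eq_Top}, smooth $C_2$-schemes are locally affine in the equivariant Nisnevich topology, so it suffices to argue over an affine base $\Spec R$ with involution. The target $\colim_n B_{isoEt}O(\mbb H^n)$ classifies Hermitian bundles that are isovariant étale-locally hyperbolic of some rank, while the source classifies rank-$n$ Hermitian direct summands of $\mbb H^N$. The semilocal input of Section \ref{sec:herm_form_semiloc} guarantees that any Hermitian bundle over a semilocal ring with involution and $\tfrac{1}{2}$ is determined up to isomorphism by its rank, hence isovariant étale-locally hyperbolic, and moreover embeds as an orthogonal summand of a sufficiently large hyperbolic bundle. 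Thus both sides represent the same isovariant étale sheaf of sets of Hermitian isomorphism classes. The remaining content is that the space of Hermitian embeddings of a fixed bundle into $\mbb H^N$ becomes $\mbb A^1$-contractible as $N \to \infty$, which is a Stiefel-type computation that I would carry out using iterated $\mbb A^1$-bundle arguments in the equivariant setting.

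The main obstacle is the extension to non-regular base schemes. In the regular case, Schlichting--Tripathi identify the Grassmannian with a categorical model of $GW$ using resolution of singularities on the Hermitian side; over a non-regular base this identification fails, and I would instead argue entirely at the level of motivic presheaves, before appealing to any categorical $K$-theoretic input. The ample family of line bundles hypothesis is used precisely here: it guarantees enough projectives so that every Hermitian bundle is a summand of a large hyperbolic bundle, making the Grassmannian model classify the correct objects even without regularity. A secondary technical point is checking that the isovariant étale topology has enough semilocal stalks for Section \ref{sec:herm_form_semiloc} to apply; this follows from the description of isovariant étale covers in Section \ref{chap:Eq_Top}.
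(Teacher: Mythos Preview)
Your outline is broadly on the right track and shares its architecture with the paper's argument: reduce to affines, use Corollary~\ref{cor:loc_det_rank} to see that Hermitian bundles are isovariant \'etale locally determined by rank, and deduce that the Grassmannian models the classifying space once a Stiefel presheaf is shown to be $\mbb A^1$-contractible. However, you have glossed over the mechanism that converts Stiefel contractibility into the desired equivalence $\RGr_{|W|}(W\perp\mbb H^\infty)\simeq B_{isoEt}O(W)$. The paper does \emph{not} ``check on stalks'': it invokes the Morel--Voevodsky style torsor criterion (Hoyois, \cite[Lemma~2.1]{cdhdesc}), which says that $L_{isoEt}(U/\Gamma)\to B_{isoEt}\Gamma$ is a motivic equivalence provided the \emph{twisted} sheaves $U_\pi\to X$ are motivic equivalences for every isovariant \'etale $\Gamma$-torsor $\pi$. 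In the present situation this means you must show that for every $O(W)$-torsor $\pi$ the twist $\St(W,\mbb H^\infty)_\pi$ is motivically contractible, not just the untwisted Stiefel presheaf. The paper handles this by identifying the twist with $\St(V,V\perp\mbb H^\infty_X)$ for $V=W_\pi$ and then using the affine hypothesis to embed $V\hookrightarrow\mbb H^m$; your proposal does not address this twisted case, and without it the passage from ``fiber is contractible'' to ``quotient is the classifying space'' is incomplete.

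Your proposed route to Stiefel contractibility via ``iterated $\mbb A^1$-bundle arguments'' is also different from what the paper does, and it is not clear it goes through equivariantly without substantial work. The paper instead proves (Lemma~\ref{lem:incHE}) that the inclusion $O(\mbb H^\infty)(\Delta R)\hookrightarrow O(V\perp\mbb H^\infty)(\Delta R)$ is a homotopy equivalence of simplicial groups by an explicit na\"\i ve $\mbb A^1$-homotopy built from permutation matrices written as products of elementary matrices, and then identifies $\St(V,V\perp\mbb H^\infty)$ with the coset space $O(\mbb H^\infty)\backslash O(V\perp\mbb H^\infty)$. This is elementary and makes no regularity assumption, which is exactly what allows the argument to go through over a non-regular base; your sketch correctly identifies non-regularity as the obstacle but does not supply the replacement for the resolution-based arguments of \cite{SchTri}.
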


With a simple modification to remove the regularity hypothesis, one can follow
\cite{SchTri} to show that
\[
L_{\mathrm{mot}}\Z \times \colim_n B_{isoEt}O(\mbb H^n)
\xrightarrow{\sim} L_{\mathrm{mot}} \Omega^\infty GW
\]
but as this is unnecessary for proving cdh descent, we leave it out of this paper.

Section \ref{chap:Einf} provides a convenient way of passing from the
presheaf of Grothendieck-Witt spectra to an
$E_\infty$-motivic spectrum in $\SH^{C_2}(S)$. The crucial fact is that
the localizing version of Hermitian $K$-theory of rings with involution, denoted $\mbb
GW$, is the periodization of $GW$ with respect to a certain Bott map
derived from projective bundle formulas for $\mbb P^1$ and $\mbb P^\sigma$ (see Corollary \ref{cor:GW_per}). Here
$\mbb P^\sigma$ is a copy of $\mbb P^1$ with action $[x:y] \mapsto
[y:x]$. The fact that the periodization functor is monoidal together
with Schlichting's results on monoidality of $GW$ immediately give
that the motivic spectrum $L_{\mbb A^1}\mbb GW \in \SH^{C_2}(S)$ is an
$E_\infty$ object \ref{thm:GW_Einfty}.

\begin{theorem}
Let $S$ be a Noetherian scheme of finite Krull dimension with an ample
family of line bundles and $\frac{1}{2} \in S$. Then $L_{\mbb A^1}\mbb GW$ lifts to an $E_\infty$ motivic spectrum, denoted
$\KR$, over $\Sm{S,qp}^{C_2}$.
\end{theorem}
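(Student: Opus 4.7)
The plan is to build up the $E_\infty$ structure in three stages: first on the presheaf of Grothendieck--Witt spectra $GW$ itself, next on the Bott-periodization $\mbb GW$, and finally on its motivic localization $L_{\mbb A^1}\mbb GW \in \SH^{C_2}(S)$. The key structural input is that each of the two functors applied (periodization and $\mbb A^1$-localization) is a symmetric monoidal Bousfield localization, hence preserves $E_\infty$-algebras.

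For the first stage, I would cite Schlichting's construction reviewed in Section \ref{sec:Higher_GW}: the Grothendieck--Witt spectrum of a dg-category with weak equivalences and duality is lax symmetric monoidal with respect to the tensor product of Hermitian forms. Packaging this for the presheaf $GW$ on $\Sm{S,qp}^{C_2}$ produces an $E_\infty$-algebra in presheaves of spectra on $\Sm{S,qp}^{C_2}$, with unit given by the rank-one symmetric form.

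For the second stage, Corollary \ref{cor:GW_per} identifies $\mbb GW$ as the periodization of $GW$ with respect to the Bott classes arising from the projective bundle formulas for $\mbb P^1$ and $\mbb P^\sigma$. Since the relevant Bott elements originate from the $E_\infty$-multiplicative structure on the motivic spaces $\mbb P^1$ and $\mbb P^\sigma$, periodization with respect to them is effected by inverting a central element in the homotopy category of $E_\infty$-algebras. The standard machinery for inverting central multiplicative systems in a presentably symmetric monoidal $\infty$-category then endows $\mbb GW$ with a canonical $E_\infty$-structure lifting the one on $GW$. For the third stage, apply motivic localization $L_{\mbb A^1}$: because this is a symmetric monoidal localization of the presheaf $\infty$-category (and, once stabilized, lands in $\SH^{C_2}(S)$), it carries $E_\infty$-algebras to $E_\infty$-algebras. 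The resulting $E_\infty$ motivic spectrum is declared to be $\KR$.

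The main obstacle is the second stage: one must verify that the Bott maps coming from the projective bundle formulas are genuinely implemented by multiplication by a \emph{central} invertible element in an $E_\infty$-sense, rather than merely inducing an equivalence on underlying spectra. Concretely, this amounts to checking that the Bott class is an $E_\infty$-map of $GW$-modules, so that the localization at it is monoidal; this is where the symmetric monoidality of the projective bundle formulas for $\mbb P^1$ and $\mbb P^\sigma$, and the compatibility of Schlichting's tensor structure with the representability statement of Section \ref{chap:herm_grass}, must be used in an essential way. Once this is in hand, the passage to $L_{\mbb A^1}\mbb GW \in \SH^{C_2}(S)$ and the lift of the monoidal structure along the stabilization $\Sp^{C_2}(\Sm{S,qp}^{C_2}) \to \SH^{C_2}(S)$ is formal.
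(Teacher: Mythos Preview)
Your three-stage outline matches the paper's argument: Schlichting's cup product makes $GW$ an $E_\infty$-algebra in presheaves of spectra; Corollary \ref{cor:GW_per} together with \cite[Lemma 3.3]{cdhdesc} identifies $\mbb GW$ with the periodization of $GW$, which inherits an $E_\infty$-structure because periodization is a symmetric monoidal localization; and $L_{\mbb A^1}$ preserves $E_\infty$-algebras. So far so good.

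However, the obstacle you single out in stage two is misdiagnosed. In an $E_\infty$-ring every homotopy class is automatically central, and multiplication by any class $\beta \in \pi_*GW$ is already a map of $GW$-modules; there is nothing to check there. What does require care is that the sequential colimit $Q_\gamma GW$ computes the genuine symmetric monoidal localization $GW[\gamma^{-1}]$, and this is exactly what Corollary \ref{cor:GW_per} together with \cite[Lemma 3.3]{cdhdesc} supplies.

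The more serious gap is in your stage three. You write that ``the lift of the monoidal structure along the stabilization $\Sp^{C_2}(\Sm{S,qp}^{C_2}) \to \SH^{C_2}(S)$ is formal,'' but this is precisely the step that is \emph{not} formal. After $\mbb A^1$-localization, $L_{\mbb A^1}\mbb GW$ is an $E_\infty$-algebra in $\Sp(\HH^{C_2}(S))$, the category of homotopy invariant Nisnevich sheaves of spectra; this is not yet $\SH^{C_2}(S)$, which is obtained from $\Sp(\HH^{C_2}(S))$ by further inverting $T^\rho$. To lift the $E_\infty$-structure through that stabilization, the paper invokes \cite[Proposition 3.2]{cdhdesc}, whose hypothesis is that $L_{\mbb A^1}\mbb GW$ is Bott periodic, i.e.\ that $T^\rho$ already acts invertibly on its module category. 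This in turn rests on identifying $T^\rho \simeq (\mbb P^1,[1:1]) \wedge (\mbb P^\sigma,[1:1])$ in $\HH^{C_2}(S)$ and on the projective bundle formulas of Section~\ref{chap:Einf}; without that, there is no reason for an $E_\infty$-algebra in $\Sp(\HH^{C_2}(S))$ to descend to one in $\SH^{C_2}(S)$. Your proposal uses the Bott elements only to build $\mbb GW$ from $GW$, and never to bridge $\Sp(\HH^{C_2}(S))$ and $\SH^{C_2}(S)$, so the last step as written does not go through.
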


The final section \ref{chap:cdh} follows the recipe given by Cisinski
and summarized in \cite{cdhdesc} to prove cdh descent for equivariant
homotopy Hermitian $K$-theory on the category of quasi-projective $S$-schemes. After reviewing the $K$-theory case,
the section culminates in theorem \ref{thm:cdh_desc}.

\begin{theorem}
Let $S$ be a Noetherian scheme of finite Krull dimension with an ample
family of line bundles and $\frac{1}{2} \in S$. Then the
homotopy Hermitian $K$-theory spectrum of rings with involution $L_{\mbb A^1}\mbb
GW$ satisfies descent for the equivariant cdh topology on $\Sch{S,qp}^{C_2}$.
\end{theorem}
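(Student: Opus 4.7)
The plan is to follow the recipe Cisinski introduced for $\KGL$, adapted to the $C_2$-equivariant setting by Hoyois in \cite{cdhdesc}. In that recipe, cdh descent for a motivic spectrum $E \in \SH^{C_2}(S)$ is a formal consequence of two inputs: (a) $E$ defines a \emph{cocartesian section} of $\SH^{C_2}(-)$ over $\Sch{S,qp}^{C_2}$, i.e., it is stable under arbitrary base change; and (b) the six functor formalism for $\SH^{C_2}(-)$, in particular localization and proper base change. Input (b) is already available from Hoyois's work, so the real task is to verify (a) for $\KR$, after which the descent statement follows by essentially the same abstract argument that handles $\KGL$.

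\textbf{Step 1: Base change stability of $\KR$.} For every morphism $f : T \to S$ in $\Sch{S,qp}^{C_2}$, I would show $f^* \KR_S \simeq \KR_T$ in $\SH^{C_2}(T)$. Each ingredient assembled in the previous sections is manifestly geometric and commutes with pullback: the real Grassmannians $\RGr_\bullet$ are functors of points and pull back to the real Grassmannians on $T$; the representability equivalence of Theorem \ref{thm:RGr_GW_nonreg} is stated uniformly for any base satisfying our standing hypotheses; and the Bott element used to periodize $GW$ is built from the projective bundle formulas for $\mbb P^1$ and $\mbb P^\sigma$ furnished by Corollary \ref{cor:GW_per}, whose defining classes are absolute. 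Since $f^*$ is symmetric monoidal and commutes with filtered colimits, $\mbb A^1$-localization, and Nisnevich sheafification, the periodization $L_{\mbb A^1}\mbb GW$ that defines $\KR$ is preserved by $f^*$.

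\textbf{Step 2: Descent from base change.} The equivariant cdh topology on $\Sch{S,qp}^{C_2}$ is generated by the equivariant Nisnevich topology together with abstract blow-up squares. Nisnevich descent is built into $\SH^{C_2}(-)$, so it suffices to treat a Cartesian abstract blow-up square
\[
\xymatrix{
E \ar[r] \ar[d] & Y \ar[d]^{p} \\
Z \ar[r]^{i} & X
}
\]
with $p$ proper and restricting to an isomorphism over $X \setminus Z$. The localization cofiber sequence attached to the closed $C_2$-immersion $i$, combined with proper base change for $p$, expresses the unit $\mathbf{1}_X$ in $\SH^{C_2}(X)$ as a pushout involving the $!$-pushforwards from $Z$, $Y$, $E$. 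Mapping this presentation into $\KR_X$ and invoking Step 1 at every vertex converts it into the homotopy pullback square expressing cdh descent for the presheaf $U \mapsto \Hom_{\SH^{C_2}(U)}(\mathbf{1}_U, \KR_U)$.

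\textbf{Main obstacle.} The delicate part is Step 1, specifically that the Bott element constructed from the $\mbb P^\sigma$-projective bundle formula really pulls back correctly. The $\mbb P^1$-direction is standard in motivic homotopy theory, but the $\mbb P^\sigma$-direction is particular to the $C_2$-equivariant Hermitian setup and depends on the Hermitian projective bundle formula (and hence Corollary \ref{cor:GW_per}) persisting under arbitrary base change in $\Sch{S,qp}^{C_2}$. Once that verification is in hand, Schlichting's monoidality together with the absoluteness of the periodization class shows that $\KR$ is a cocartesian section, and Step 2 closes the argument.
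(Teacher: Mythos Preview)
Your overall strategy is correct and matches the paper's: reduce cdh descent to showing that $\KR$ is a cocartesian section of $\SH^{C_2}(-)$ (your Step 1), then invoke the six-functor machinery (your Step 2, which in the paper is packaged as \cite[Corollary 6.25]{HoyoisSixOp}).

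However, you have mislocated the main obstacle. The Bott element $\beta^{1+\sigma}$ is \emph{not} the delicate point: it is constructed once and for all over $\Spec \mbb Z[\tfrac12]$ (see Definition \ref{def:T_Tsigma} and equation \eqref{eq:bott_element}) and then pulled back, so its compatibility with $f^*$ is automatic by naturality. The projective bundle formula for $\mbb P^\sigma$ is used to \emph{produce} this absolute class, not to re-verify anything after base change. What actually requires work is base change for the \emph{unstable} input $GW_{\geq 0}$, i.e.\ showing that the canonical map $f^*(\Omega^\infty GW|_{\Sm{X}^{C_2}}) \to \Omega^\infty GW|_{\Sm{D}^{C_2}}$ is a motivic equivalence. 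The paper does this by the chain
\[
\Omega^\infty GW \;\simeq\; \mathrm{Herm}^+ \;\simeq\; \Bigl(\coprod_{n\geq 0} B_{isoEt}O(\langle 1\rangle^{\perp n})\Bigr)^+
\]
of motivic equivalences, then uses that $f^*$ commutes with group completion (\cite[Lemma 5.5]{cdhdesc}) and, crucially, that $f^*B_{isoEt}\Gamma \simeq B_{isoEt}f^*\Gamma$ for ind-representable sheaves of groups $\Gamma$ (\cite[Proposition 2.9]{cdhdesc}). It is precisely here that the Grassmannian representability of Section \ref{chap:herm_grass} enters: it supplies the ind-representability needed to apply Hoyois's proposition. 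Your sketch gestures at Theorem \ref{thm:RGr_GW_nonreg} ``holding uniformly,'' but that alone does not give $f^*$-compatibility of the classifying-space construction; you need the explicit route through $\mathrm{Herm}^+$ and \cite[Proposition 2.9]{cdhdesc}. Once that is in place, compatibility of $QL_{\mathrm{mot}}$ with $f^*$ and Porism \ref{por:GW_per} finish Step 1 exactly as you suggest.
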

{\bf Acknowledgements} 

This paper is a condensed and cleaned up version of my doctoral thesis. 
I owe a debt of gratitude to my advisor Jeremiah Heller for his smooth and continuous support 
throughout my time in graduate school. I would also like to thank my academic siblings Tsutomu Okano and Brian Shin
for the many helpful conversations about motivic homotopy theory. Finally, I want to thank Elden Elmanto
for his help and willingness to answer even my most inane questions.

This paper is much better off because of the detailed reading and helpful comments of an anonymous reviewer, and 
I would like to thank them for their work.

\section{Equivariant Topologies and the Equivariant Motivic Homotopy Category}\label{chap:Eq_Top}

This section reviews the foundations of equivariant motivic homotopy
theory. The key definitions are those of the equivariant \'etale and
Nisnevich topologies -- two topologies that play a crucial role in
defining the equivariant motivic infinity category
$\HH^{G}(S)$ over a Noetherian base scheme $S$ with finite Krull
dimension, with an ample family of line bundles, and with $\frac{1}{2}
\in \Gamma(S,\mc O_S)$. Throughout we'll work with two categories of schemes.
Let $\Sch{S,qp}^{C_2}$ denote the category of quasi-projective $C_2$-schemes which are 
separated and finite type over $S$, and let 
 $\Sm{S,qp}^{C_2}$ be the full subcategory of schemes smooth over $S$.

\begin{notation}
Throughout this section, $G$ will be either a finite group or the
group scheme over $S$ associated to a finite group. Recall that to pass between
finite groups and group schemes over $S$, we form the scheme
$\coprod_G S$ with multiplication (using that fiber products commute
with coproducts in $\Sch{S,qp}$):
\[
\xymatrix{\coprod_G S \times_S \coprod_G S \ar[r]^-\sim &\coprod_{(g_1,g_2)\in G \times
  G} S \ar[r]^-{\mu}& \coprod_G S}
\]

Whenever we write down a pullback square involving schemes, we'll
tacitly be thinking of $G$ as a group scheme, and $X \times Y$ will
really mean $X \times_S Y$.
\end{notation}

We introduce the background definitions from \cite{GrpSchHell} which
will allow us to define the isovariant \'etale topology. This is a topology
which is slightly coarser than the equivariant \'etale topology,
but whose points are still nice enough so that Hermitian vector
bundles are locally determined by rank. 

\begin{definition}
For a $G$-scheme $X$, the isotropy group scheme is a group scheme
$G_X$ over $X$ defined by the cartesian square
\[
\xymatrix{G_X \ar[r] \ar[d] & G \times X \ar[d]^{(\mu_X,id_X)} \\ X
  \ar[r]^{\Delta_X} & X \times X}
\]
\end{definition}

\begin{definition}
Let $X$ be a $G$-scheme. The scheme-theoretic stabilizer of a point
$x$ in $X$ is the pullback
\[
\xymatrix{G_x \ar[r] \ar[d] & G_X \ar[d]\\ \Spec k(x) \ar[r] & X.}
\]

By the pasting lemma, this is the same as the pullback
\[
\xymatrix{G_x \ar[r] \ar[d] & G \times X \ar[d]\\ \Spec k(x) \ar[r] &
  X \times X}
\]
\end{definition}

\begin{definition}
Let $X$ be a $G$-scheme, and define the \emph{set-theoretic}
stabilizer $S_x$ of $x \in X$ to be $\{g \in G
\mid gx = x\}$. 
\end{definition}

\begin{remark}
With notation as above, the underlying set of the scheme-theoretic
stabilizer $G_x$ can be described as
\[
G_x = \{g \in S_x \mid \text{ the induced morphism $g : k(x)
  \rightarrow k(x)$ equals } id_{k(x)}\}.
\]
\end{remark}

The example below shows that set-theoretic and scheme-theoretic
stabilizers need not agree.

\begin{example} (Herrmann \cite{GrpSchHerr})
Let $k$ be a field, and consider the $k$-scheme given by a finite
Galois extension $k \hookrightarrow L$. Let $G = Gal(L/k)$ be the
Galois group. The set-theoretic stabilizer of the unique point in
$\Spec L$ is $G$ itself, while the scheme-theoretic stabilzer is
$\{e\} \subset G$. 
\end{example}

\begin{remark}
Recall that if $Z \rightarrow X$ is a monomorphism of schemes, then
the forgetful functor from schemes to sets preserves any pullback $Z
\times_X Y$. The forgetful functor $\Sch{S,qp}^G \rightarrow \Sch{S,qp}$
is a right adjoint, hence preserves pullbacks.

Since the inclusion of a point $\Spec k(x) \hookrightarrow X \times_S X$
will be a monomorphism, the difference
between the set-theoretic and scheme-theoretic stabilizers is due to
the fact that the underlying space of $X \times_S X$ is not
necessarily the fiber product of the underlying spaces. Indeed, in the
example above, $\Spec L \times_k \Spec L \cong \coprod_{g \in G} \Spec
k$, whereas the pullback in spaces is just a single point. 
\end{remark}

\subsection{The Equivariant and Isovariant \'Etale Topologies}

\begin{notation}
Let $S$ be a  $G$-scheme. The equivariant \'etale topology on $\Sch{S,qp}^{G}$
will denote the site whose covers are \'etale covers whose component
morphisms are equivariant.
\end{notation}

\begin{definition} (Thomason)
An equivariant map $f : Y \rightarrow X$ is said to be
\emph{isovariant} if it induces an isomorphism on isotropy groups $G_Y \cong G_X \times_X
Y$. A collection $\{f_i : X_i \rightarrow X\}_{i \in I}$ of
equivariant maps called an isovariant \'etale cover if it is an
equivariant \'etale cover such that each $f_i$ is isovariant. 
\end{definition}

\begin{remark}
The isovariant topology is equivalent to the topology whose covers are
equivariant, stabilizer preserving, \'etale maps. We'll use this
notion more often in computations. 
\end{remark}

\begin{remark}
The points in the isovariant \'etale topology are schemes of the form
$G \times^{G_x} \Spec(\mc O^{sh}_{X,\overline x})$ where $\overline x
\rightarrow x \rightarrow X$ is a geometric point, and $(-)^{sh}$ denotes
strict henselization. See \cite{GrpSchHell} for a proof.
\end{remark}

The fact that the points in the isovariant \'etale topology are either
strictly henselian local or hyperbolic rings will be crucial when we want to describe
the isovariant \'etale sheafification of the category of Hermitian
vector bundles. Fortunately Hermitian vector bundles over such rings
are well understood, and we'll in fact show that Hermitian vector bundles
are up to isometry determined by rank locally in the isovariant
\'etale topology. 

\begin{remark}
If $G = C_2$, then $G_x = \{e\}$ or $G_x = C_2$ for all $x \in X$. If
$G_x = \{e\}$, then $G \times^{G_x} \Spec(\mc O^{sh}_{X,\overline x})
\cong C_2 \times \Spec(\mc O^{sh}_{X,\overline x}) \cong \Spec(\mc
O^{sh}_{X,\overline x})\coprod \Spec(\mc O^{sh}_{X,\overline x})$ with a
free action. If $G_x = C_2$, then $G \times^{G_x} \Spec(\mc
O^{sh}_{X,\overline x}) = \Spec(\mc
O^{sh}_{X,\overline x})$ where the induced action on the residue field is trivial.
\end{remark}

\subsection{The Equivariant Nisnevich Topology}

Similarly to the non-equivariant case, the equivariant Nisnevich
topology is defined by a particularly nice cd-structure. While there are
a few different definitions of this topology in the literature which
can give non Quillen equivalent model structures, we use the
definition from \cite{GrpSchHell}.

\begin{definition}
A distinguished equivariant Nisnevich square is a cartesian square in
$\Sch{S,qp}^G$
\[
\xymatrix{B \ar[r]\ar[d] & Y \ar[d]^p \\ A \ar@{^{(}->}[r]^i & X}
\]
where $i$ is an open immersion, $p$ is \'etale, and $p$ restricts to
an isomorphism
$(Y-B)_{\mathrm{red}} \rightarrow (X-A)_{\mathrm{red}}$. 
\end{definition}

\begin{definition}
The equivariant Nisnevich $cd$-structure on $\Sch{S,qp}^G$ is the
collection of distinguished equivariant Nisnevich squares in $\Sch{S,qp}^G$.
\end{definition}

The next remark has the important consequence that to prove a map is
an equivariant motivic equivalence, it suffices to check that it's an
equivalence on affine $G$-schemes. 

\begin{remark}
By Lemma 2.20 in \cite{GrpSchHell}, for finite groups $G$, any separated $G$-scheme of
finite type over $S$ is Nisnevich-locally
affine. 
\end{remark}

\subsection{The Equivariant cdh Topology}

The completely decomposed h (cdh) topology is, roughly speaking, the coarsest topology
satisfying Nisnevich excision and which allows for a theory of
cohomology with compact support. Like the Nisnevich topology (and
unlike the \'etale topology) it can be generated by a cd-structure,
which gives a convenient way to check whether or not a presheaf is a
cdh sheaf. 

\begin{definition}\label{def:cdh_top}
An abstract blow-up square is a cartesian square in $\Sch{S,qp}^G$
\[
\xymatrix{\widetilde Z \ar[r]\ar[d] & \widetilde X \ar[d]^p \\ Z \ar@{^{(}->}[r]^i & X}
\]
where $i$ is a closed immersion and $p$ is a proper map which induces
an isomorphism $(\widetilde X - \widetilde Z) \cong (X-Z)$.
\end{definition}

\begin{definition}
The cdh topology is the topology generated by the cd-structure whose
distinguished squares are
\begin{itemize}
\item the equivariant Nisnevich distinguished squares;
\item the abstract blowup squares.
\end{itemize}
\end{definition}

One canonical example of a cdh cover is the map $X_{red} \rightarrow
X$ for an equivariant scheme $X \rightarrow S$. Another example is given by resolution of singularities: given a
proper birational map $p : X \rightarrow Y$, it's an isomorphism over
some open set $U$ in $Y$, so letting $Z = Y-U$ and $\widetilde Z = X -
p^{-1}(U)$ we get an abstract blowup square
\[
\xymatrix{\widetilde Z \ar[r] \ar[d] & X \ar[d] \\ Z \ar[r] & Y}
\]

\subsection{The Equivariant Motivic Homotopy Category}\label{sec:cat_def}

In this paper, we'll work with a Noetherian scheme of finite Krull
dimension and a finite group scheme $G$ over $S$. Equivariant motivic
homotopy theory is developed in somewhat more generality by Hoyois in
\cite{HoyoisSixOp}, though there's a price to be paid for allowing
more general group schemes in that the motivic localization functor becomes more
complicated.  

\begin{definition}
A presheaf $F$ on $\Sch{S,qp}^G$ is called \emph{homotopy invariant} if
the projection $\mbb A^1_S \rightarrow S$ induces an equivalence $F(X)
\simeq F(X \times \mbb A^1)$ for each $X$ in $\Sch{S,qp}^G$. Denote by $\mc
P_{\mathrm{htp}}(\Sch{S,qp}^{G}) \subset \mc P(\Sch{S,qp}^G)$ the full
subcategory spanned by the homotopy invariant presheaves. Denote by
$L_{\mathrm{htp}}$ the corresponding localization endofunctor of $\mc P(\Sch{S,qp}^G)$.
When restricting to $\Sm{S,qp}^G$, we'll abuse notation and similarly let $L_{\mathrm{htp}}$
denote the corresponding localization endofunctor.
\end{definition}

Now we give the usual definition of excision, the condition that
guarantees that a presheaf is a Nisnevich sheaf.

\begin{definition}
A presheaf $F$ on $\Sch{S,qp}^G$ (or $\Sm{S,qp}^G$) is called \emph{Nisnevich excisive} if:
\begin{itemize}
\item $F(\emptyset)$ is contractible;
\item for every equivariant Nisnevich square $Q$ in $\Sch{S,qp}^G$ (or $\Sm{S,qp}^G$), $F(Q)$ is
  cartesian. 
\end{itemize}

Denote by $\mc P_{\mathrm{Nis}}(\Sm{S,qp}^G) \subset \mc P(\Sm{S,qp}^G)$ the
full subcategory of Nisnevich excisive presheaves. Denote by
$L_{\mathrm{Nis}}$ the corresponding localization endofunctor. 
\end{definition}

Finally we come to the definition of a motivic $G$-space, namely a
presheaf that is both Nisnevich excisive and homotopy invariant.

\begin{definition}
Let $S$ be a $G$-scheme. A \emph{motivic $G$-space} over $S$ is a
presheaf on $\Sm{S,qp}^G$ that is homotopy invariant and Nisnevich
excisive. Denote by $\HH^G(S) \subset \mc P(\Sm{S,qp}^G)$ the full
subcategory of motivic $G$-spaces over $S$.

Let 
\[
L_{{\mathrm{mot}}} = \colim_{n \rightarrow \infty} (L_{\mathrm{htp}}
\circ L_{\mathrm{Nis}})^n(F)
\]
denote the motivic localization functor, where the colimit is in the $\infty$-category
of presheaves.
\end{definition}

In order to form the stable equivariant motivic homotopy category, we
also need to discuss pointed motivic $G$-spaces.

\begin{definition}
Let $S$ be a $G$-scheme. A \emph{pointed motivic $G$-space} over $S$
is a motivic $G$-space $X$ over $S$ equipped with a global section $S
\rightarrow X$. Denote by $\HH_\bullet^G(S)$ the $\infty$-category of
pointed motivic $G$-spaces. 
\end{definition}

The definition of stabilization can in general be complicated. With
our assumptions however, we need only invert the Thom space of the
regular representation $T^\rho$.

\begin{definition}
Let $S$ be a $G$-scheme. The symmetric monoidal $\infty$-category of
\emph{motivic $G$-spectra} over $S$ is defined by 
\[
\SH^G(S) = \HH_\bullet^G(S)[( T^{\rho}_S)^{-1}] = \colim
\left(\HH_\bullet^G \xrightarrow{-\otimes T^\rho} \HH_\bullet^G
  \xrightarrow{-\otimes T^\rho} \cdots \right)
\]
where $T^\rho$ is the Thom space of the regular
representation $\mbb A^\rho/\mbb A^\rho - 0$ of $G$. The colimit is 
taken in the $\infty$-category of presentable $\infty$-categories.
\end{definition}

\subsection{Computations with Equivariant Spheres}

Because we'll be using equivariant spheres to index our spectra, we'll
record some of their basic properties here. These computations will be important when we investigate periodicity of $GW$ in section \ref{chap:Einf}. Though there are exotic
elements of the Picard group even in non-equivariant stable motivic
homotopy theory, we'll be concerned with the four building blocks
$S^1, S^\sigma = \colim (\ast \leftarrow (C_2)_+ \rightarrow S^0), \mbb G_m, \mbb
G_m^{\sigma}$. Here $\mbb G_m^\sigma$ is the $C_2$ scheme
corresponding to $S[T,T^{-1}]$ with action $T \mapsto T^{-1}$. 

\begin{lemma}
Let $\mbb P^\sigma$ denote $\mbb P^1$ with the action defined by $[x:y] \mapsto
[y:x]$. There is an equivariant Nisnevich square
\[
\xymatrix{C_2\times \mbb G_m^\sigma \ar[r]\ar[d]^{\pi_2} & \mbb P^1 -
  \{0\} \coprod \mbb P^1 - \{\infty\} \ar[d]^f \\ \mbb G_m^\sigma \ar[r]^i & \mbb P^\sigma}
\]
\end{lemma}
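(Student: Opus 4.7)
The plan is to compute the pullback of $f$ along $i$ directly and then verify the three conditions of a distinguished equivariant Nisnevich square. Since all four $C_2$-schemes are built from the standard open charts on $\mbb P^1$, the whole argument is essentially a coordinate bookkeeping exercise.

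First, I would write everything in homogeneous coordinates. Put $\mbb P^\sigma = \mathrm{Proj}\, S[x,y]$ with $\sigma \cdot [x:y] = [y:x]$. The map $i : \mbb G_m^\sigma \hookrightarrow \mbb P^\sigma$ is the usual open immersion $\mbb P^1 - \{[0:1], [1:0]\} \hookrightarrow \mbb P^1$, and it is equivariant because the $\sigma$-action swaps the two omitted points $0 = [0:1]$ and $\infty = [1:0]$, inducing $t \mapsto t^{-1}$ on the coordinate $t = y/x$; this matches the defining action on $\mbb G_m^\sigma$. The map $f$ is the disjoint union of the two standard open immersions $\mbb P^1 - \{0\} = \Spec S[y/x]$ and $\mbb P^1 - \{\infty\} = \Spec S[x/y]$, equipped with the $C_2$-action that swaps the two components via $[a:b] \mapsto [b:a]$; equivariance of $f$ is then immediate.

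Next, forgetting the $C_2$-action, the pullback is the disjoint union $(\mbb G_m^\sigma \cap (\mbb P^1 - \{0\})) \sqcup (\mbb G_m^\sigma \cap (\mbb P^1 - \{\infty\})) = \mbb G_m \sqcup \mbb G_m$, with $\pi_2$ the identity on each factor. I would identify this with $C_2 \times \mbb G_m^\sigma$ as a $C_2$-scheme by parametrizing each copy of $\mbb G_m$ by $t$ so that a point reads $(t, [t:1])$ in the appropriate factor. A quick check shows the inherited $C_2$-action sends $(t, [t:1]_{\text{first}})$ to $(t^{-1}, [1:t]_{\text{second}})$, i.e., it swaps the two copies of $\mbb G_m$ and inverts $t$; this is exactly the diagonal action on $C_2 \times \mbb G_m^\sigma$ coming from left translation on $C_2$ and inversion on $\mbb G_m^\sigma$. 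Under this identification $\pi_2$ matches the projection to $\mbb G_m^\sigma$.

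Finally, I would verify the three hypotheses of an equivariant Nisnevich distinguished square from the preceding subsection: $i$ is an open immersion (immediate); $f$ is étale (being a disjoint union of open immersions); and $f$ restricts to an isomorphism $(Y-B)_{\mathrm{red}} \to (X-A)_{\mathrm{red}}$. Here $X - A = \{0, \infty\}$ while $Y - B = \{\infty\}_{\text{first}} \sqcup \{0\}_{\text{second}}$, and $f$ sends $\infty_{\text{first}} \mapsto \infty$ and $0_{\text{second}} \mapsto 0$; this is a bijection of reduced points with matching residue fields, hence an isomorphism. The only genuinely tricky part of the whole argument is tracking the inversion on $\mbb G_m$ that appears after the component swap, which ultimately comes from the fact that the involution $[x:y] \mapsto [y:x]$ interchanges the two affine charts by sending $y/x$ to its inverse; everything else is formal.
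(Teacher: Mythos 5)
Your proof is correct and follows the same approach as the paper: identify $\mbb G_m^\sigma$ with $\mbb P^\sigma - \{0,\infty\}$, observe $i$ is an open immersion, note $f$ is \'etale as a disjoint union of open immersions, and check $f$ restricts to an isomorphism on the reduced complements $\{\infty\}\sqcup\{0\} \to \{0,\infty\}$. Your write-up is more detailed than the paper's — the paper takes the identification of the pullback with $C_2 \times \mbb G_m^\sigma$ (and the equivariance bookkeeping) for granted, whereas you verify it explicitly — but the substance and route are the same.
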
 

\begin{proof}
Here, we identify $\mbb G_m^\sigma$ with $\mbb P^\sigma -
\{0,\infty\}$. The map $i$ is clearly an open immersion. Its
complement is $\mbb \{0,\infty\}$, and $f$ maps
$f^{-1}(\{0,\infty\})$ isomorphically onto
$\{0,\infty\}$. Furthermore, $f$ is a disjoint union of open
immersions, and hence is (in particular) \'etale. 
\end{proof}

\begin{lemma}
There is a homotopy pushout square, where $f$ can be taken to map $C_2$ to $\{[1:1]\}$:

\[
\xymatrix{(C_2)_+\wedge  (\mbb G_m^\sigma)_+ \ar[r]\ar[d]^{\pi_2} &
  (C_2)_+ \ar[d]^f \\ (\mbb G_m^\sigma)_+ \ar[r]^i & \mbb P_+^\sigma}
\]
\end{lemma}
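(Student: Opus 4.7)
The plan is to deduce this pushout square from the equivariant Nisnevich distinguished square of the previous lemma, using Nisnevich excision to pass to a homotopy pushout in $\HH_\bullet^{C_2}(S)$ and then $\mbb A^1$-invariance to simplify the top-right corner.

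First I would apply the plus construction to the previous lemma's Nisnevich square. Since distinguished Nisnevich squares become homotopy pushouts after motivic localization, this yields a homotopy pushout
\[
\xymatrix{(C_2 \times \mbb G_m^\sigma)_+ \ar[r]\ar[d]^{\pi_2} & (\mbb P^1-\{0\})_+ \vee (\mbb P^1-\{\infty\})_+ \ar[d]^{f} \\ (\mbb G_m^\sigma)_+ \ar[r]^{i} & \mbb P_+^\sigma}
\]
in $\HH_\bullet^{C_2}(S)$, and the natural identity $(C_2 \times \mbb G_m^\sigma)_+ \cong (C_2)_+ \wedge (\mbb G_m^\sigma)_+$ rewrites the upper-left corner as desired.

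Next I would produce an explicit $C_2$-equivariant isomorphism $(\mbb P^1-\{0\}) \sqcup (\mbb P^1-\{\infty\}) \cong C_2 \times \mbb A^1$, where $C_2$ acts by swapping the two copies (this follows from the fact that the $C_2$-action on $\mbb P^\sigma$ by $[x:y] \mapsto [y:x]$ swaps $0$ and $\infty$ and hence swaps the two open charts isomorphically). Then $\mbb A^1$-invariance provides an equivalence $((\mbb P^1-\{0\}) \sqcup (\mbb P^1-\{\infty\}))_+ \simeq (C_2)_+$ in $\HH_\bullet^{C_2}(S)$, realized by the $C_2$-equivariant section picking out the point $[1:1]$ in each of the two open charts. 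This section is $C_2$-equivariant precisely because $[1:1] \in \mbb P^\sigma$ is fixed by the involution, so the two copies of $[1:1]$ are swapped by the $C_2$-action on the disjoint union.

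Finally I would check that, under this replacement, the map $f$ becomes the map $(C_2)_+ \to \mbb P_+^\sigma$ sending $C_2$ to $[1:1]$. This is immediate from the definition of $f$ as the coproduct of the two open immersions into $\mbb P^\sigma$: each of the selected $[1:1]$ sections maps to the same point $[1:1] \in \mbb P^\sigma$. I expect no real obstacle here -- the only subtle step is ensuring that the $\mbb A^1$-contraction is performed equivariantly, and the existence of the $C_2$-fixed point $[1:1]$ makes this a matter of bookkeeping rather than a substantive difficulty.
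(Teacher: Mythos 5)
Your proposal is correct and follows essentially the same route as the paper's own proof: apply the previous lemma's equivariant Nisnevich square (which becomes a homotopy pushout after motivic localization), rewrite the top-right corner as $(C_2 \times \mbb A^1)_+ \cong (C_2)_+ \wedge \mbb A^1_+$ using that $(-)_+$ is monoidal, and then use $\mbb A^1$-invariance via the $C_2$-fixed section $[1:1]$ to contract $\mbb A^1_+$. You are somewhat more explicit than the paper about why the contraction is equivariant and why $f$ ends up landing at $[1:1]$, but the underlying argument is the same.
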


\begin{proof}
The above square is equivalent to the square

\[
\xymatrix{(C_2)_+\wedge  (\mbb G_m^\sigma)_+ \ar[r]\ar[d]^{\pi_2} &
  (C_2)_+ \wedge \mbb A^1_+ \ar[d]^f \\ (\mbb G_m^\sigma)_+ \ar[r]^i & \mbb P_+^\sigma}.
\] 

By the lemma above, 

\[
\xymatrix{(C_2\times \mbb G_m^\sigma)_+ \ar[r]\ar[d]^{\pi_2} & (C_2
  \times \mbb A^1)_+\ar[d]^f \\ (G_m^\sigma)_+ \ar[r]^i & \mbb P_+^\sigma}
\]
is a homotopy pushout square. But adding a disjoint basepoint is a
monoidal functor, so $X_+ \wedge Y_+ \cong (X \times Y)_+$ and this
square is equivalent to the desired square.
\end{proof}

\begin{lemma}
$\mbb P^\sigma \approx S^\sigma \wedge \mbb G_m^\sigma$.
\end{lemma}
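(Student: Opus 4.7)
The plan is to derive the equivalence as a stable identification in $\SH^{C_2}(S)$, by applying the smash-product cofiber formula to the pushout of the previous lemma and comparing two computations of the resulting total cofiber. First I would reinterpret that pushout as the three-corner portion of a $2 \times 2$ smash square: since $(C_2)_+ \wedge S^0 = (C_2)_+$ and $S^0 \wedge (\mbb G_m^\sigma)_+ = (\mbb G_m^\sigma)_+$, the two structure maps out of $(C_2)_+ \wedge (\mbb G_m^\sigma)_+$ are $\mathrm{id}_{(C_2)_+} \wedge \eta'$ and $\eta \wedge \mathrm{id}_{(\mbb G_m^\sigma)_+}$, where $\eta \colon (C_2)_+ \to S^0$ and $\eta' \colon (\mbb G_m^\sigma)_+ \to S^0$ are the canonical collapses. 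Completing the square with bottom-right corner $S^0 \wedge S^0 = S^0$ produces a canonical map $\mbb P_+^\sigma \to S^0$ via the universal property of the pushout.

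I would then compute the cofiber of this map in two independent ways. On the one hand, the smash-product cofiber formula (the total cofiber of a $2 \times 2$ smash square is the smash of the cofibers of its two factor maps, valid in any pointed presentably symmetric monoidal $\infty$-category and in particular in pointed motivic $C_2$-spaces) identifies the total cofiber with $\mathrm{cof}(\eta) \wedge \mathrm{cof}(\eta') \simeq S^\sigma \wedge \Sigma \mbb G_m^\sigma \simeq \Sigma(S^\sigma \wedge \mbb G_m^\sigma)$; here $S^\sigma = \mathrm{cof}(\eta)$ by the definition recalled at the start of this subsection, and I use the standard identification $\mathrm{cof}(X_+ \to S^0) \simeq \Sigma X$ applied to $X = \mbb G_m^\sigma$ pointed at $1$. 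On the other hand, the previous lemma specifies that $f \colon (C_2)_+ \to \mbb P_+^\sigma$ can be chosen to land at the $C_2$-fixed point $[1:1]$; tracing through the universal property then shows the induced map $\mbb P_+^\sigma \to S^0$ is the canonical projection collapsing $\mbb P^\sigma$ to the non-basepoint, which is split by the inclusion $S^0 \hookrightarrow \mbb P_+^\sigma$ sending $\mathrm{pt} \mapsto [1:1]$. The splitting gives $\mbb P_+^\sigma \simeq \mbb P^\sigma \vee S^0$ in pointed $C_2$-motivic spaces (with $\mbb P^\sigma$ pointed at $[1:1]$), so the cofiber of the collapse is $\Sigma \mbb P^\sigma$.

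Comparing the two descriptions yields $\Sigma \mbb P^\sigma \simeq \Sigma(S^\sigma \wedge \mbb G_m^\sigma)$, and since $S^1$-suspension is invertible in $\SH^{C_2}(S)$ this desuspends to the claimed $\mbb P^\sigma \approx S^\sigma \wedge \mbb G_m^\sigma$. The main bookkeeping hurdle is verifying that the map $\mbb P_+^\sigma \to S^0$ produced by the universal property of the smash-square pushout genuinely agrees with the collapse map coming from the splitting at $[1:1]$; this is precisely why the previous lemma insists that $f$ be chosen to land at $[1:1]$, and once this compatibility is in place the remainder is a formal manipulation with cofiber sequences.
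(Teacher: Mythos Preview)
Your argument is correct and is really a repackaging of the paper's own cofiber analysis: both proofs extract the identification from the same pushout square, computing an iterated cofiber in two ways. You organize it as a single ``total cofiber'' of the completed smash square $(C_2)_+ \wedge (\mbb G_m^\sigma)_+ \to S^0$, invoking the formula $\mathrm{cof}(f)\wedge\mathrm{cof}(g)$ and the identification $\mathrm{cof}(X_+\to S^0)\simeq \Sigma X$; the paper instead takes column cofibers first to get $Q\simeq \widetilde Q$, and then uses a $3\times 3$ diagram built from the basepoint \emph{inclusion} $S^0 \hookrightarrow \mbb P^\sigma_+$ (rather than your collapse $\mbb P^\sigma_+ \to S^0$) to read off $\mathrm{cof}(S^\sigma \to \widetilde Q)\simeq \mbb P^\sigma$ directly.

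The one substantive difference is the strength of the conclusion. Because you pass through the collapse map, you land on $\Sigma\mbb P^\sigma \simeq \Sigma(S^\sigma\wedge\mbb G_m^\sigma)$ and must desuspend in $\SH^{C_2}(S)$, so your equivalence is only stable. The paper's route, using the section $S^0\hookrightarrow \mbb P^\sigma_+$ instead of the retraction, avoids the extra suspension and yields the equivalence already in $\HH^{C_2}_\bullet(S)$. For the applications downstream (the periodicity arguments in Section~\ref{chap:Einf}, which live in $\SH^{C_2}$) your stable version is entirely sufficient; but if the lemma is meant as an unstable statement, you would want to reorganize along the paper's lines.
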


\begin{proof}
Let $Q$ denote the homotopy cofiber of $(C_2 \times \mbb G_m^\sigma)_+
\rightarrow (\mbb G_m^\sigma)_+$, and $\widetilde Q$ denote the homotopy
cofiber of $(C_2 \times \mbb A^1)_+ \rightarrow \mbb P^\sigma_+$. Then the
lemma above implies that $Q \approx \widetilde Q$. 

$Q$ is the homotopy cofiber of $
(C_2)_+ \wedge \mbb (G_m^\sigma)_+ \rightarrow S^0 \wedge \mbb
(G_m^\sigma)_+$, which is just $S^\sigma \wedge \mbb
(G_m^\sigma)_+$. Recall that $\colim(* \leftarrow X \rightarrow X
\wedge Y_+) \cong X \wedge Y$ since this is $X \wedge \colim(*
\leftarrow S^0 \rightarrow Y_+)$. Thus the cofiber of $S^\sigma
\rightarrow Q$ is $S^\sigma \wedge \mbb G_m^\sigma$.

The diagram below in which the horizontal rows are cofiber sequences
\[
\xymatrix{(C_2)_+ \ar[r]\ar[d]^{id} & S^0 \ar[d] \ar[r]& S^\sigma
  \ar[d] \\(C_2)_+ \ar[r]\ar[d] & \ar[r]\ar[d] \mbb P^\sigma_+ & \widetilde
  Q \ar[d] \\ \star \ar[r] & \mbb P^\sigma \ar[r] & T }
\]

implies that the cofiber of $S^\sigma \rightarrow \widetilde Q$ is
$\mbb P^\sigma$. 

The result now follows from the commutativity of the following diagram
and homotopy invariance of homotopy cofiber:
\[
\xymatrix{S^\sigma \ar[r]^{id}\ar[d] & S^\sigma \ar[d] \\ Q \ar[r]^{\sim} \ar[d]&
\widetilde Q \ar[d]\\ S^\sigma \wedge \mbb G_m^\sigma \ar[r] & \mbb P^\sigma}
\]

\end{proof}

\section{Hermitian Forms on Schemes}\label{chap:herm_form_sch}

This section reviews the definitions and properties of Hermitian forms over schemes
with involution from\cite{Xie2018ATM}. After defining the proper
notion of the dual of a quasi-coherent module over a scheme with involution, the definition of a Hermitian vector bundle finally
appears in Definition \ref{def:herm_form_sch} as a locally free $\mc
O_X$-module with a well-behaved map to the dual module. Once the
definitions are in place, we discuss in section
\ref{sec:herm_form_semiloc} the structure of Hermitian forms over
semilocal rings as this is the fundamental tool for showing that Hermitian forms
are locally trivial in the isovariant \'etale
topology. We prove this particular statement in Corollary
\ref{cor:loc_det_rank}. We end this section by recalling Schlichting's
definition of a dg categoy with weak equivalence and duality and the
Grothendieck-Witt groups of such an object.  

\subsection{Definitions}\label{sec:herm_def}

\begin{definition}
Let $R$ be a ring with involution $- : R \rightarrow R^{op}$. A \emph{Hermitian
module over $R$} is a finitely generated projective right $R$-module, $M$, together
with a map
\[
b : M \otimes_{\Z} M \rightarrow R
\]  

such that, for all $a \in R$,
\begin{enumerate}
\item $b(xa,y) = \overline a b(x,y)$,
\item $b(x,ya) = b(x,y) a$,
\item $b(x,y) = \overline{b(y,x)}$.
\end{enumerate}
\end{definition}

\begin{definition}
Let $R$ be a ring with involution $-$. Given a right $R$-module $M$, define
a left $R$-module, denoted $\overline M$ as follows: $\overline M$ has the same underlying abelian
group as $M$, and the action is given by $r \cdot m = m \cdot \overline r$. If $R$ is
commutative, we can promote $\overline M$ to an $R$-bimodule by introducing the right action
 $m \cdot r =
m \overline r$. 
\end{definition}

\begin{remark}
Let $R$ be a commutative ring so that $R = R^{op}$. Given an involution $\sigma : R \rightarrow R$ and a right $R$-module
$M$, we can identify $\overline M$ with $\sigma_*M$, where $\sigma_*M$ is 
the module $M$ with $R$-action restricted through $\sigma$. Typically the pushforward
would just take the right $R$-module $M$ to another right $R$-module. Since we really view 
$\sigma$ as landing in $R^{op}$, we use commutativity of $R$ and 
 the canonical identification of right $R^{op}$ modules 
with left $R$-modules to think of $\sigma_*M$ as a left module.
Indeed,
$\sigma_*M$ is a left $R$-module via the rule $r \cdot m =
m \cdot \sigma(r) $. 
\end{remark}

\begin{remark}
Another way to define a Hermitian form over a ring $R$ with involution
$\sigma$ is to give a finitely generated projective right $R$-module $M$ together
with an $R-R$-bimodule map  
\[
b :  M \otimes_{\Z}  M \rightarrow R
\]
where we view $R$ as a
bimodule over itself by $r_1 \cdot r \cdot r_2 = r_1rr_2$, $M$ as
a left $R$-module via the involution, and such that $b(x,y) = \sigma(b(y,x))$. If we remove the final
condition, we obtain a sesquilinear form. 
\end{remark}

By the usual duality, we have a third definition:

\begin{definition}
A
Hermitian module over a ring $R$ with involution is a finitely
generated projective right $R$-module $M$ together with an $R$-linear map $b : M
\rightarrow \overline{M}\dual = M^*$ such that $b = b^*can_M$, where
$b^* : M^{**} \rightarrow M^*$ is given by $(b^*(f))(m) = f(b(m))$. Here 
$can_M : M \rightarrow M^{**}$, $can_M(m)(f) = \overline{f(m)}$ is the canonical 
double dual isomorphism.
\end{definition}

Now, we generalize the above definitions to schemes.

\begin{definition}
Let $X$ be a scheme, and $M$ a quasi-coherent $\mc O_X$-module. Define $\mc O_X\dual
= \iHom_{mod-\mc O_X}(M,\mc O_X)$.
\end{definition}

\begin{definition}
Let $X$ be a scheme with involution $\sigma$, and $M$ a right $\mc
O_X$-module. Note that there's an induced map $\sigma^{\#} : \mc O_X
\rightarrow \sigma_*\mc O_X$. Define the right (note that we're
working with sheaves of commutative rings, so we can do this) $\mc O_X$-module $\overline
M$ to be $\sigma_*M$ with $\mc O_X$ action induced by the map
$\sigma^{\#}$. That is, if $m \in \sigma_*M(U)$, and $c \in \mc
O_X(U)$, then $m \cdot c = m \cdot \sigma^{\#}(c)$. Note that this
last product is defined, because $m \in \sigma_*M(U) =
M(\sigma^{-1}(U))$, $c \in \sigma_*\mc O_X(U) = \mc
O_X(\sigma^{-1}(U))$, and $M$ is a right $\mc O_X$-module. 
\end{definition}

\begin{remark}
We have two choices for the definition of the dual $M^*$. We can
either define 
\[
M^* = \iHom_{mod-\mc O_X}(\sigma_*M,\mc O_X),
\] 
or we can
define $M^* = \sigma_* \iHom_{mod-\mc O_X}(M,\mc O_X)$. We claim that
these two choices of dual are naturally isomorphic. 
\end{remark}

\begin{proof}
Let $f : \sigma_*M|_U \rightarrow \mc O_X|_U$ be a map of right $\mc
O_X|_U$-modules. Post-composing with the map $\mc O_X|_U \rightarrow
\sigma_* \mc O_X|_U$ yields a map $\overline f : \sigma_*M|_U
\rightarrow \sigma_*\mc O_X|_U$, a.k.a. a map $M|_{\sigma^{-1}U}
\rightarrow \mc O_X|_{\sigma^{-1}U}$. Note that \\$\sigma_*\iHom_{mod-\mc
  O_X}(M,\mc O_X)(U) = \iHom_{mod-\mc
  O_X}(M,\mc O_X)(\sigma^{-1}U)$, so that $\overline f \in \sigma_*\iHom_{mod-\mc
  O_X}(M,\mc O_X)(U)$.

On the other hand, given $g \in \sigma_*\iHom_{mod-\mc O_X}(M,\mc
O_X)(U)$, so that $g : \sigma_*M|_U
\rightarrow \sigma_*\mc O_X|_U$, we can postcompose with
$\sigma_*(\sigma^\#)$ to get a map $\widetilde g : \sigma_*M|_*
\rightarrow \sigma_*\sigma_*\mc O_X|_U = \mc O_X|_U$. Since $\sigma^2
= id$, this is clearly the inverse to the map above. 

It's clear that these assignments are natural, since they're just
postcomposition with a natural transformation. 
\end{proof}

\begin{definition}
Define the adjoint module $M^*$ to be $\iHom_{mod-\mc O_X}(\sigma_*M,\mc
O_X)$. By the remark above, it doesn't really matter which of the two
possible definitions we choose here. From this point, we will also use $\iHom_{mod-\mc O_X}$
synonymously with $\iHom_{\mc O_X}$.
\end{definition}

\begin{definition}\label{def:double_dual_iso}
Given a right $\mc O_X$-module $M$, we define the double dual
isomorphism $\can_M : M \rightarrow M^{**}$ as follows: given an open $U
\subseteq X$, we define a map
\[
M(U) \rightarrow \Hom_{\mc O_U}(\sigma_* \iHom_{\mc O_X}(\sigma_* M,\mc O_X)|_U, \mc O_U)
\]
\[
  = \Hom_{\mc O_U}(\iHom_{\mc O_X}(\sigma_* M|_{\sigma(U)}, \mc O_X|_{\sigma(U)}),\mc O_U)
\]
by $u \mapsto \eta_u$, where for an open $V \subseteq U$,
\[
(\eta_u)_V(\gamma) = (\sigma^\#)_{V}^{-1}(\gamma_{\sigma(V)}(u|_V)).
\]

Here $\gamma \in \Hom_{\mc O_{\sigma(U)}}(\sigma_* M|_{\sigma(U)}, \mc
O_{\sigma(U)})$ and $\sigma^\#$ is the morphism of sheaves
$\sigma^\# : \mc O_X \rightarrow \sigma_* \mc O_X$. Note that $\gamma_{\sigma(V)}(u|_V)$ makes sense
because $\sigma_*M(\sigma(V)) = M(V)$.

More globally, there's an evaluation map
\[
ev_\sigma : M \otimes \sigma_* \iHom_{\mc O_X}(\sigma_*M,\mc O_X) \rightarrow \mc O_X
\]
defined by the composition
\[
M \otimes \sigma_* \iHom_{\mc O_X}(\sigma_*M,\mc O_X) \cong M \otimes
\iHom_{\mc O_X}(M,\sigma_*\mc O_X) \xrightarrow{ ev} \sigma_* \mc O_X
\xrightarrow{(\sigma^\#)^{-1}} \mc O_X
\]
which under adjunction yields the above map. 
\end{definition}

\begin{definition}\label{def:herm_form_sch}
Let $X$ be a scheme with involution $- : X \rightarrow X$. Let
$\can_X$ be the double dual isomorphism of Definition \ref{def:double_dual_iso}. A
\emph{Hermitian vector bundle over $X$} is a locally free right $\mc
O_X$-module $V$ with an $\mc O_X$-module map $\phi : V \rightarrow
V^*$ such that $\phi = \phi^*\can_V$. A Hermitian vector bundle is \emph{non-degenerate}
if $\phi$ is an isomorphism.
\end{definition}

\begin{remark}
  Recall that there's an equivalence of categories between locally
  free coherent sheaves on $X$ and geometric vector bundles given by
  $M \mapsto \mathbf{Spec}(\mathrm{Sym}(M\dual))$ in one direction and the sheaf
  of sections in the other. For locally free sheaves, we have $M\dual
  \otimes N\dual \cong (M \otimes N)\dual$ so that the functor is
  monoidal. We will use this to think of a Hermitian
  form as a map of schemes $V \otimes V \rightarrow \mbb A^1$.
\end{remark}

Below we give the key example of a Hermitian vector bundle.

\begin{example}\label{ex:hyp_space}

Define (diagonal) hyperbolic $n$-space over a scheme $(S,-)$ with involution to be $\mbb A^{2n}_S$
with the Hermitian form $(x_1,\dots,x_{2n},y_1,\dots,y_{2n}) \mapsto
\sum_{i=1}^n  \overline x_{2i-1} y_{2i-1} -  \overline x_{2i}
y_{2i}$. Denote this Hermitian form by $h_{\mathrm{diag}}$.

As defined this way, the matrix of this Hermitian form is
\[
\begin{bmatrix}
1 & 0 & \cdots \\
0 & -1 & 0 & \cdots \\
\vdots & \vdots & \vdots \\
0 & \cdots & \cdots & \cdots & -1
\end{bmatrix}
\]
the diagonal matrix diag$(1,-1,1,\dots,-1)$. For this definition to
give a Hermitian space isometric to other standard definitions of the
hyperbolic form, it's crucial that 2 be invertible. 

The isometries of $\mbb H_{\mbb R}$ (where we give it the hyperbolic
form above) have the form
\[
\begin{bmatrix}
a & b\\
\pm b & \pm a
\end{bmatrix}
\]
with $a = \pm\sqrt{1 + b^2}, b \in \mbb R$ (or $a^2 - b^2 = 1$). The
usual identification with $\mbb R^\times \rtimes C_2$ follows by
considering the decomposition $a^2 - b^2 = 1 \iff (a+b)(a-b) = 1$.
\end{example}

\begin{example}
Similarly to above, we can define a hyperbolic form $h$ by the matrix
\[
\begin{bmatrix}
0 & I\\
I & 0
\end{bmatrix}.
\]
This form is isometric to the above form, and we'll use both forms below.
\end{example}

\subsection{Properties}

We record two unsurprising structural results which will be useful when we define
the Hermitian Grassmannian in section \ref{chap:herm_grass}.
\begin{lemma}
Given a map of schemes with involution $f : (Y,i_Y) \rightarrow (X,i_X)$
and a (non-degenerate) Hermitian vector bundle $(V,\omega)$ on $X$, $f^*(V)$ is
a (non-degenerate) Hermitian
vector bundle on $Y$.
\end{lemma}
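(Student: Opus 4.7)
The plan is to take the Hermitian form $\omega : V \to V^*$ on $X$, apply $f^*$, and then use naturality of pullback, pushforward by an involution, and internal Hom to identify $f^*(V^*)$ with $(f^*V)^*$. Since $f^*$ is a functor, one immediately gets $f^*\omega : f^*V \to f^*(V^*)$; the real work is producing a canonical isomorphism $\alpha : f^*(V^*) \xrightarrow{\sim} (f^*V)^*$ and then verifying that $\omega' := \alpha \circ f^*\omega$ satisfies the symmetry condition of Definition \ref{def:herm_form_sch}.

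For the identification $\alpha$, I would use that $f$ is equivariant, i.e.\ $f \circ i_Y = i_X \circ f$, which yields a natural isomorphism $f^* \circ (i_X)_* \cong (i_Y)_* \circ f^*$ (both $i_X$ and $i_Y$ are involutions, so their pushforward and pullback agree). Combined with the standard isomorphism $f^*\iHom_{\mc O_X}(M,\mc O_X) \cong \iHom_{\mc O_Y}(f^*M, \mc O_Y)$ for $M$ locally free, this gives
\[
f^*(V^*) = f^*\iHom_{\mc O_X}((i_X)_* V,\mc O_X) \cong \iHom_{\mc O_Y}((i_Y)_* f^*V,\mc O_Y) = (f^*V)^*.
\]

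Next I would verify $\omega' = (\omega')^* \can_{f^*V}$. Applying $f^*$ to the defining equation $\omega = \omega^*\can_V$ on $X$ produces an identity of morphisms $f^*V \to f^*V^{**}$. Under iterated application of $\alpha$, the codomain is identified with $(f^*V)^{**}$, and I would check that $f^*(\omega^*)$ transports to $(\omega')^*$ and, crucially, that $f^*(\can_V)$ transports to $\can_{f^*V}$. The latter is the main obstacle: it requires unpacking the local formula in Definition \ref{def:double_dual_iso} and observing that each ingredient --- the evaluation pairing and the structure map $\sigma^\#$ --- is natural with respect to equivariant pullback. Once the naturality of $\sigma^\#$ is spelled out (which follows from the equality $f \circ i_Y = i_X \circ f$ applied to the structure sheaves), the compatibility becomes a diagram chase.

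Finally, the non-degenerate case is automatic: if $\omega$ is an isomorphism, then so is $f^*\omega$, and since $\alpha$ is an isomorphism by construction, $\omega' = \alpha \circ f^*\omega$ is an isomorphism as well. So the entire argument reduces to bookkeeping about the symmetric monoidal naturality of the dual construction with respect to equivariant morphisms, with the delicate point being naturality of $\can$.
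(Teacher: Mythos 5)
Your proposal is correct and follows essentially the same route as the paper's proof: both reduce to producing a canonical isomorphism $f^*(V^*) \cong (f^*V)^*$ by commuting $f^*$ with the locally-free dual and then checking that the involution-twist $\overline{(-)}$ (equivalently, pushforward along the involution) commutes with $f^*$, which rests on the equivariance $f \circ i_Y = i_X \circ f$. You go somewhat further than the paper by explicitly verifying that the symmetry condition $\omega = \omega^*\can_V$ transports to $\omega' = (\omega')^*\can_{f^*V}$ via naturality of $\can$, a step the paper's proof leaves implicit.
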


\begin{proof}
The pullback of a locally free $\mc O_X$-module is a locally free $\mc
O_Y$-module, so we just need to check that it's Hermitian. Given the
map $\omega : V \rightarrow V^*$, we get an induced map $f^*V
\rightarrow f^*(V^*)$ which is an isomorphism if $\omega$ is. Thus we
just need to check that $f^*(V^*) \cong (f^*V)^*$. But pullback
commutes with sheaf dual for locally free sheaves of finite rank, so
we just need to check that changing the module structure via the
involution commutes with pullback; that is, we need to check that
$f^*(\overline V) = \overline{f^*(V)}$. However, this is clear since
the structure map on $f^*(\overline V)$ is given by

\[
O_Y \times f^*V \cong f^* \mc O_X \times f^*V \xrightarrow{f^*(-) \times id} f^*\mc(O_X) \times f^*(V)
\rightarrow f^*(V).
\]
\end{proof}

\begin{lemma}\label{lem:perp_pullback}
Let $(V,\phi)$ be a non-degenerate Hermitian vector bundle over a
scheme with trivial involution $X$, and let $(M,\phi|_M)$ be a (possibly degenerate)
sub-bundle. Given a map of schemes $g : Y \rightarrow X$, there is a
canonical isomorphism
$g^*(M^\perp) \cong (g^*M)^\perp$.
\end{lemma}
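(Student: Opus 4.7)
The plan is to identify $M^\perp$ as the kernel in a locally split short exact sequence, and then to invoke the compatibility of pullback with duality established in the previous lemma.

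First I would write $M^\perp$ as the kernel of the composite
\[
\psi : V \xrightarrow{\phi} V^* \xrightarrow{i^*} M^*,
\]
where $i : M \hookrightarrow V$ is the inclusion. Because $M$ is a sub-bundle of $V$, the inclusion $i$ is locally a direct summand, so $i^*$ is locally split surjective. Composing with the isomorphism $\phi$ yields a locally split short exact sequence
\[
0 \to M^\perp \to V \xrightarrow{\psi} M^* \to 0
\]
of $\mc O_X$-modules, in which $M^*$ is locally free.

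Next I would apply $g^*$. Since the sequence is locally split (equivalently, since $M^*$ is flat), pullback preserves exactness to give a short exact sequence
\[
0 \to g^* M^\perp \to g^* V \to g^* M^* \to 0.
\]
The previous lemma supplies canonical isomorphisms $g^*(M^*) \cong (g^*M)^*$ and $g^*(V^*) \cong (g^*V)^*$, and identifies $g^*\phi$ with the induced Hermitian structure on $g^*V$. Under these identifications, the map $g^*\psi$ is the composite $g^*V \to (g^*V)^* \to (g^*M)^*$ whose kernel is by definition $(g^*M)^\perp$. Uniqueness of kernels then yields the desired canonical isomorphism $g^*(M^\perp) \cong (g^*M)^\perp$.

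The main bookkeeping obstacle is to check that the dualization-pullback identification is natural with respect to the restriction map $i^*$; that is, that applying $g^*$ to $i^* : V^* \to M^*$ and then transporting via $g^*(-)^* \cong (g^*-)^*$ recovers $(g^*i)^* : (g^*V)^* \to (g^*M)^*$. This is a naturality check for the isomorphism of the previous lemma, and it is straightforward given that $X$ has trivial involution: the twists by $\sigma^\#$ appearing in the abstract definition of the dual reduce to the identity on $X$ and pull back compatibly to $Y$, so the relevant square commutes on the nose.
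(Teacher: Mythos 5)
Your proof is correct and takes essentially the same approach as the paper: both identify $M^\perp$ as the kernel of $V \to M^*$, observe that the resulting short exact sequence (locally) splits since $M^*$ is locally free, and deduce that $g^*$ preserves exactness so the pulled-back kernel agrees with $(g^*M)^\perp$. The paper phrases the splitting argument via a stalkwise check rather than invoking local splitness globally, but this is a cosmetic difference.
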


\begin{proof}
Recall that, by definition, $M^\perp = \ker(V \xrightarrow{\phi} V^*
\rightarrow M^*)$. Equivalently, $M^\perp$
is defined by the exact sequence
\[
0 \rightarrow M^\perp \rightarrow V \rightarrow M^* \rightarrow 0.
\] 

It follows that the composite map $g^*(M^\perp) \rightarrow g^*V
\rightarrow g^*(M^*)$ is zero, and hence by universal property of kernel there's a canonical map
\[
g^*(M^\perp) \rightarrow \ker(g^*V \rightarrow g^*(M^*) \cong (g^*(M))^*) = (g^*(M))^\perp
\]
 where we've used the canonical isomorphism $g^*(M^*) \cong
 (g^*(M))^*$ for locally free sheaves.

We claim that this map is an isomorphism. It suffices to check on
stalks, where the map can be identified with a map 
\[
M^\perp_{g(y)} \otimes \mc O_{Y,y} \rightarrow \ker(V_{g(y)} \otimes
\mc O_{Y,y} \rightarrow M^*_{g(y)} \otimes \mc O_{Y,y}).
\]

But $V_{g(y)} \cong M^\perp_{g(y)} \oplus M^*_{g(y)}$, so the sequence
\[
0 \rightarrow M^\perp_{g(y)} \otimes \mc O_{Y,y}\rightarrow V_{g(y)}
\otimes \mc O_{Y,y} \rightarrow M^*_{g(y)} \otimes \mc O_{Y,y}
\rightarrow 0
\]
is split exact, and the canonical map is an isomorphism. 
\end{proof}

We record two incredibly useful results for working with Hermitian forms. The first implies that
Hermitian forms over fields can be written as an orthogonal sum of rank 1 Hermitian forms, while the second
gives a useful characterization of non-degenerate submodules of a Hermitian module.

\begin{theorem}\label{thm:field_diag} (Knus \cite{HermKnus} 6.2.4)
Let $(M,b)$ be a non-degenerate Hermitian vector bundle over a division ring
$D$. Then $(M,b)$ has an orthogonal basis in the following cases:
\begin{enumerate}
\item the involution of $D$ is not trivial
\item the involution of $D$ is trivial and
  char $D \neq 2$.
\end{enumerate}
\end{theorem}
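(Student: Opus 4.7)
The plan is to induct on the rank of $M$. The inductive step reduces to a single geometric claim: if $(M,b)$ is non-degenerate and nonzero, then there exists $v \in M$ with $b(v,v) \neq 0$. Given such a $v$, the rank-one sub-bundle $\langle v \rangle$ is non-degenerate, so $M = \langle v \rangle \oplus \langle v \rangle^\perp$; the restriction of $b$ to $\langle v \rangle^\perp$ is again non-degenerate (by the same argument used to prove Lemma \ref{lem:perp_pullback}, applied over the field $D$), and induction yields the orthogonal basis.

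So the whole content is to rule out $b(v,v) = 0$ for every $v \in M$. In case (2), with trivial involution and $\mathrm{char}(D) \neq 2$, the standard polarization identity
\[
0 = b(u+v,u+v) = b(u,u) + b(u,v) + b(v,u) + b(v,v) = 2\,b(u,v)
\]
forces $b(u,v) = 0$ for all $u,v$, contradicting non-degeneracy. This half is essentially immediate.

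The main obstacle is case (1), where the involution is nontrivial but the characteristic is arbitrary. Assuming $b(v,v) = 0$ identically, polarization still gives $b(u,v) = -\overline{b(u,v)}$. Picking $u,v$ with $\alpha := b(u,v) \neq 0$ (possible by non-degeneracy) and expanding $b(u\lambda + v, u\lambda + v) = 0$ yields the identity $\overline{\lambda}\,\alpha + \overline{\alpha}\,\lambda = 0$ for every $\lambda \in D$, which exhibits $\lambda \mapsto \overline{\lambda}$ as an inner twist by $\alpha$. Plugging in $\lambda = 1$ gives $\overline{\alpha} = -\alpha$, and then the relation becomes $\overline{\lambda} = \alpha\lambda\alpha^{-1}$ for all $\lambda$. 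Applying this to $\lambda = \alpha$ gives $\overline{\alpha} = \alpha$, so $\alpha = -\alpha$, i.e.\ $2\alpha = 0$; in characteristic $\neq 2$ this already contradicts $\alpha \neq 0$.

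The delicate subcase is characteristic $2$ with nontrivial involution, where the last step collapses. Here one must use the nontriviality of the involution directly: pick $\mu \in D$ with $\overline{\mu} \neq \mu$ and substitute $v \mapsto v + u\mu$ in order to modify $\alpha$ by a noncentral element, then iterate to force $\alpha$ into a forbidden region of the fixed ring $\{d \in D : \overline{d} = d\}$. The argument ultimately uses that any element of $D$ is a sum of traces $\lambda + \overline{\lambda}$ times units, contradicting the assumption that $\alpha \neq 0$ while $\overline{\alpha} = \alpha$ and $\lambda \mapsto \alpha\lambda\alpha^{-1}$ is the involution. This is exactly where we invoke Knus \cite{HermKnus}, 6.2.4, rather than grinding through the case division ourselves.
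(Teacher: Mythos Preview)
The paper does not prove this theorem at all; it is stated with the attribution ``(Knus \cite{HermKnus} 6.2.4)'' and used as a black box. So there is no proof in the paper to compare against, and your proposal already goes well beyond what the paper does.

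Your overall strategy is the standard one and is correct: induct on rank, and reduce to finding an anisotropic vector. Case (2) is handled cleanly. In case (1), however, you work harder than necessary and then unnecessarily defer the characteristic $2$ subcase back to Knus. Your own relation $\overline{\lambda} = \alpha\lambda\alpha^{-1}$ already finishes the job in every characteristic: the left-hand side is an anti-automorphism of $D$ (this is what ``involution'' means), while the right-hand side is an inner automorphism. Equating them forces $\overline{\mu}\,\overline{\lambda} = \overline{\lambda}\,\overline{\mu}$ for all $\lambda,\mu$, so $D$ is commutative; but then $\alpha\lambda\alpha^{-1} = \lambda$ and the involution is trivial, contradicting the hypothesis of case (1). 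No separate characteristic $2$ argument, and no appeal to Knus, is needed.
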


\begin{lemma} (Knus)
Let $(M,b)$ be a Hermitian module, and $(U,b|_U)$ be a non-degenerate
finitely generated projective Hermitian submodule. Then $M = U \oplus U^\perp$.
\end{lemma}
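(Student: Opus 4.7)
My plan is to produce the splitting by writing down an explicit projection onto $U$ coming from the non-degeneracy hypothesis, and then identifying its kernel with $U^\perp$.

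First I would set up the adjoint picture from Section~\ref{sec:herm_def}. The form $b$ corresponds to an $R$-linear map $\phi_M : M \to M^*$, and the restriction $b|_U$ corresponds to $\phi_U : U \to U^*$. Non-degeneracy of $(U,b|_U)$ says precisely that $\phi_U$ is an isomorphism. Let $\iota : U \hookrightarrow M$ denote the inclusion; dualizing gives $\iota^* : M^* \to U^*$, and the square $\iota^* \circ \phi_M \circ \iota = \phi_U$ commutes by definition of $b|_U$.

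Next I would define the candidate projection
\[
\pi := \phi_U^{-1} \circ \iota^* \circ \phi_M : M \longrightarrow U.
\]
Since $\phi_U$ is an isomorphism of right $R$-modules (the bar twist has been absorbed in passing from sesquilinear forms to adjoint maps), $\pi$ is a well-defined $R$-linear map. The commuting square above gives $\pi \circ \iota = \phi_U^{-1} \circ \phi_U = \mathrm{id}_U$, so $\pi$ is a retraction of $\iota$, yielding the short exact sequence
\[
0 \longrightarrow \ker \pi \longrightarrow M \xrightarrow{\pi} U \longrightarrow 0
\]
which splits, giving $M \cong U \oplus \ker \pi$ internally via $m \mapsto (\pi(m), m - \iota\pi(m))$.

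It remains to identify $\ker \pi = U^\perp$, which is almost tautological: $\pi(m) = 0$ iff $\iota^*(\phi_M(m)) = 0$ (using that $\phi_U^{-1}$ is invertible), iff $b(m,u) = 0$ for every $u \in U$, iff $m \in U^\perp$. Combined with $U \cap U^\perp = 0$ (immediate from non-degeneracy of $b|_U$ applied to an element of the intersection), this proves $M = U \oplus U^\perp$ as Hermitian submodules.

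I do not expect any serious obstacle here. The only thing to watch is the interaction of the involution with the identifications $U^* = \iHom(\overline U, R)$, but because we work entirely through the adjoint map $\phi_U$ (which is an $R$-linear map of right modules), the bar twists are handled uniformly and no compatibility has to be checked by hand.
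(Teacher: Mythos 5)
Your proof is correct and is essentially the paper's argument, just packaged at the level of maps rather than elements: the paper picks, for each $m \in M$, the unique $u \in U$ with $b(m,-)|_U = b(u,-)|_U$, which is precisely $\pi(m) = \phi_U^{-1}\iota^*\phi_M(m)$ in your notation, and then observes $m - u \in U^\perp$ and $U \cap U^\perp = 0$. No substantive difference.
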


\begin{proof}
Since $b|_U : U \rightarrow U^*$ is an isomorphism, given an $m \in
M$, there exists $u \in U$ such that $b(m,-)|_U = b(u,-)|_U$. But then
$b(m-u,-)|_U = 0$, so that $m-u \in U^\perp$, and $m = u + m-u$. Thus
$M = U + U^\perp$. Since $\phi|_U$ is non-degenerate, $U \cap U^\perp
= 0$, so we're done. 
\end{proof}

\subsection{Hermitian Forms on Semilocal Rings}\label{sec:herm_form_semiloc}

From here on out, all rings are assumed to be commutative. Many of the results of this 
section can be deduced from \cite{first}, though we include proofs in an effort to 
make the document self contained.

The following lemma is a slight generalization of a result 
from \cite{Baeza} which will allow us to conclude that
Hermitian forms diagonalize over semilocal rings with involution.

\begin{lemma}\label{lem:semiloc_quot_form}
Let $(R,\sigma)$ be a commutative ring with involution, and let $E$ be a Hermitian module over
$R$. Let $I \subset Jac(R)$ be an ideal fixed by the involution. For every orthogonal decomposition $\overline E = \overline F
\perp \overline G$ of $\overline E = E/IE$ over $R/I$, where $\overline
F$ is a free non-degenerate subspace of $\overline E$, there exists an
orthogonal decomposition $E = F \perp G$ of $E$ with $F$ free and
non-degenerate, and $F/IF = \overline F, G/IG = \overline G$. Here $R/I$ has the 
induced involution $\sigma(x + I) = \sigma(x) + \sigma(I) = \sigma(x) + I$.
\end{lemma}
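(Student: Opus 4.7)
The plan is to lift a basis of $\overline F$ to $E$, build $F$ as the submodule they span, and then take $G = F^\perp$ using the Knus decomposition lemma already recorded in this section.

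First I would lift. Choose a basis $\overline e_1,\dots,\overline e_n$ of the free module $\overline F$ and pick arbitrary lifts $e_1,\dots,e_n\in E$ to the quotient $E\twoheadrightarrow E/IE$. Let $F\subseteq E$ be the $R$-submodule generated by $e_1,\dots,e_n$, and consider the Gram matrix $B=(b(e_i,e_j))\in M_n(R)$. Its reduction $\overline B$ is the Gram matrix of the orthogonal basis of $\overline F$, which is invertible because $\overline F$ is non-degenerate. Since $I\subseteq\mathrm{Jac}(R)$, an element of $R$ whose image in $R/I$ is a unit is itself a unit; thus $\det B$ is a unit in $R$, so $B$ is invertible.

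Next I would deduce freeness and non-degeneracy of $F$. Invertibility of $B$ forces the $e_i$ to be $R$-linearly independent: if $\sum e_i a_i=0$, then $b(e_j,\sum e_ia_i)=\sum b(e_j,e_i)a_i=0$ for all $j$, so $B$ annihilates the column vector $(a_i)$, hence $(a_i)=0$. Therefore $F$ is free on $e_1,\dots,e_n$, $F/IF=\overline F$, and the restricted form $b|_F$ has invertible Gram matrix and is non-degenerate.

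Now I would construct $G$. Applying the decomposition lemma for non-degenerate submodules (the Knus lemma stated just before this statement) to the non-degenerate submodule $F\subseteq E$ gives $E=F\oplus F^\perp$. Set $G=F^\perp$. Reducing modulo $I$ gives $E/IE=F/IF\oplus F^\perp/IF^\perp=\overline F\oplus G/IG$. On the other hand, by hypothesis $E/IE=\overline F\perp\overline G$, and since $\overline F$ is non-degenerate, the orthogonal complement lemma applied over $R/I$ identifies $\overline G$ with $\overline F^\perp$. Any element $\overline x\in G/IG$ satisfies $\overline b(\overline e_i,\overline x)=0$ for all $i$, so $G/IG\subseteq\overline F^\perp=\overline G$; combined with the fact that both $G/IG$ and $\overline G$ are complements of $\overline F$ in $E/IE$, I conclude $G/IG=\overline G$.

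The main obstacle is the lifting step, specifically the passage from invertibility of $\overline B$ to invertibility of $B$; this is exactly where the hypothesis $I\subseteq\mathrm{Jac}(R)$ enters via the standard fact that units lift through the Jacobson radical. The only other slightly delicate point is verifying $G/IG=\overline G$, which I handle by combining the direct-sum identity $E=F\oplus F^\perp$ with non-degeneracy of $\overline F$ to pin down both summands uniquely in $E/IE$; the hypothesis that $I$ is fixed by $\sigma$ is what makes the induced involution on $R/I$ well defined and compatible with the reductions of $b$ and $F^\perp$.
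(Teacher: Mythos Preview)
Your proof is correct and follows essentially the same approach as the paper's: lift a basis of $\overline F$, use $I\subseteq\mathrm{Jac}(R)$ to see the lifted Gram matrix is invertible, deduce freeness and non-degeneracy of $F$, then invoke the Knus decomposition lemma to split off $G=F^\perp$ and identify $G/IG$ with $\overline G$ via orthogonal complements. The only cosmetic difference is that the paper starts from an orthogonal basis of $\overline F$ while you take an arbitrary one, and you spell out the final identification $G/IG=\overline G$ in slightly more detail.
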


\begin{proof}
Write $\overline F = \langle \overline x_1 \rangle \oplus \dots \oplus
\langle \overline x_n \rangle$ with $\overline x_i \in \overline F$
and $\det(\overline b(\overline x_i,\overline x_j)) \in (R/I)^\times$. Choose
representatives $x_i \in E$ of $\overline x_i$, and let $F = Rx_1 +
\dots + Rx_n$. We claim that the $x_i$ are independent, so that $F$ is
free: indeed, if $\lambda_1x_1 + \dots + \lambda_nx_n = 0$, then we
get $n$ equations $\lambda_1b(x_1,x_i) + \dots + \lambda_nb(x_n,x_i) =
0$. We claim that $\det(b(x_i,x_j)) = t\in R^\times$. To wit, since $1-st \in I$
for some $s$ by assumption (because the determinant is a unit mod the
ideal $I$), then $st$ cannot be contained in any
maximal ideal, so $st \in R^\times
\implies t \in R^\times$. It follows that the $\lambda_i$ are zero
(otherwise we would have a non-zero vector in the kernel of an
invertible matrix), so that
the $x_i$ are independent as desired. The determinant fact also shows
that $F$ is non-degenerate, so by the lemma above, it has an orthogonal
summand $G$. By construction $F/I = \overline F$, so that $\overline G
= (\overline
F)^{\perp} = (F/I)^{\perp} = F^{\perp}/I = G/I$.
\end{proof}

\begin{lemma}\label{lem:prod_herm}
Hermitian forms over $R_1 \times R_2$ (with trivial involution) are in bijection with $Herm(R_1)
\times Herm(R_2)$.
\end{lemma}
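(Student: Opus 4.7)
The plan is to exploit the orthogonal idempotents $e_1 = (1,0)$ and $e_2 = (0,1)$ in $R_1 \times R_2$ to split both the underlying module and the form. Since the involution is trivial, the three defining identities for a Hermitian form reduce to $R$-bilinearity together with symmetry $b(x,y) = b(y,x)$.

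First I would construct the forward map. Given a Hermitian module $(M,b)$ over $R = R_1 \times R_2$, set $M_i = e_i M$. Standard idempotent decomposition gives $M = M_1 \oplus M_2$ as an $R$-module, with $M_i$ naturally an $R_i$-module, and this decomposition preserves finite generation and projectivity. For the form, observe that for any $x,y \in M$,
\[
b(e_i x, e_j y) = e_i \, b(x,y) \, e_j,
\]
using $b(xa,y) = a b(x,y)$ and $b(x,ya) = b(x,y) a$ together with triviality of the involution. For $i \neq j$ this is zero since $e_1 e_2 = 0$, so the decomposition is orthogonal. For $i = j$ we get $b(e_i x, e_i y) = e_i b(x,y) \in e_i R \cong R_i$, so $b$ restricts to an $R_i$-bilinear form $b_i$ on $M_i$, and symmetry is inherited. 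Thus $(M,b) \mapsto ((M_1,b_1), (M_2,b_2))$ gives a well-defined assignment.

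Next I would construct the inverse: given $(M_1, b_1) \in \mathrm{Herm}(R_1)$ and $(M_2, b_2) \in \mathrm{Herm}(R_2)$, equip $M = M_1 \oplus M_2$ with its natural $R_1 \times R_2$-module structure, and define the form componentwise by $b((x_1,x_2),(y_1,y_2)) = (b_1(x_1,y_1), b_2(x_2,y_2)) \in R_1 \times R_2$. Checking that this is projective, finitely generated, $R$-bilinear, and symmetric is routine from the componentwise definition. Finally I would verify that these two constructions are mutually inverse, which is immediate at the module level from the standard idempotent splitting, and at the form level from the formulas above.

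There is no substantive obstacle; the only point requiring care is the verification that the off-diagonal terms $b(e_1 x, e_2 y)$ vanish, which hinges crucially on the involution being trivial (so that the sesquilinearity $b(xa,y) = \bar a b(x,y)$ becomes $R$-linearity in the first slot) and on $e_1 e_2 = 0$. Once this orthogonality is in hand, everything else is bookkeeping.
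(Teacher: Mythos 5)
Your proof is correct and uses essentially the same argument as the paper: decompose $M$ via the orthogonal idempotents $e_1, e_2$, observe that the cross terms $b(e_1x,e_2y)$ vanish by (sesqui)linearity, and check that the diagonal pieces land in the respective factors $R_i$. The only difference is cosmetic — you package the two vanishing checks into the single identity $b(e_ix,e_jy) = e_i\,b(x,y)\,e_j$ and spell out the inverse construction, which the paper leaves implicit.
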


\begin{proof}
First, recall that modules over $R_1 \times R_2$ correspond to a
module over $R_1$ and a module over $R_2$. Indeed, consider the
standard idempotents $(1,0) = e_1, (0,1) = e_2$. Fix a module $M$ over
$R_1 \times R_2$. Then $M = e_1M \oplus e_2M$. Now any $m \in M$
can be written as $e_1m + e_2m = (e_1+e_2)m = m$. Furthermore, if
$e_1m_1 = e_2m_2$, then $e_2e_1m_1=e_2e_2m_2 \implies 0 = e_2m_2$. 

A Hermitian form $M \otimes M \rightarrow R_1 \times R_2$ is
determined by two maps $M \otimes M \rightarrow R_1$ and $M \otimes M
\rightarrow R_2$. Writing $M = e_1M \oplus e_2M$, we note that, by
linearity, it must be the case that $e_1M \otimes e_2M \rightarrow R_1
\times R_2$ is the zero map; to wit, $b(e_1m_1,e_2m_2) =
e_1e_2b(m_1,m_2) = 0$. Thus this Hermitian form is determined
completely by the maps $e_1M \otimes e_1M \rightarrow R_1 \times R_2$
and $e_2M \otimes e_2M \rightarrow R_1 \times R_2$. Finally, note
that, again by linearity, we see that $e_1M \otimes e_1M \rightarrow
R_2$ is the zero map: $b(e_1m_1,e_1m_2) = b(e_1^2m_1,e_1m_2) =
e_1b(e_1m_1,e_1m_2)$, and $e_1R_2 = 0$. Similarly for the other
map. Hence the Hermitian form is completely
determined by the maps $e_1M \otimes e_1M \rightarrow R_1$ and $e_2M
\otimes e_2M \rightarrow R_2$. 
\end{proof}

\begin{corollary}\label{cor:ConstRankTrivInv}
Hermitian modules of constant rank diagonalize over commutative rings $R$ with finitely many maximal
ideals $m_1,\dots,m_n$ (semi-local rings) and with $\frac{1}{2} \in R$.
\end{corollary}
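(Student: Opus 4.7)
The plan is to reduce to Lemma \ref{lem:semiloc_quot_form} using the trivial involution case of Lemma \ref{lem:prod_herm}, and then to iterate the lifting of orthogonal splittings from the residue quotient.

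First I would set $I = \mathrm{Jac}(R)$, which is fixed by the (trivial) involution. By the Chinese Remainder Theorem for semilocal rings, $R/I \cong k_1 \times \cdots \times k_n$ where $k_i = R/m_i$ are fields of characteristic not equal to $2$ (since $\tfrac{1}{2} \in R$). Let $(E,b)$ be a Hermitian $R$-module of constant rank $r$. Reducing modulo $I$, we obtain a Hermitian module $\overline{E}$ over $R/I$.

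Next I would apply Lemma \ref{lem:prod_herm} to decompose $\overline E$ as $\overline{E}_1 \perp \cdots \perp \overline{E}_n$ where each $\overline{E}_i$ is a Hermitian module over $k_i$. Constant rank guarantees that $\dim_{k_i} \overline{E}_i = r$ for every $i$. By Theorem \ref{thm:field_diag} (trivial involution, characteristic $\neq 2$), each $\overline{E}_i$ admits an orthogonal basis $\{\overline{x}_{i,1}, \ldots, \overline{x}_{i,r}\}$. Collecting these basis vectors across all $i$, define $\overline{y}_j = (\overline{x}_{1,j}, \ldots, \overline{x}_{n,j}) \in \overline E$; the constant rank condition is exactly what permits this assembly. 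The resulting decomposition $\overline{E} = \langle \overline{y}_1 \rangle \perp \cdots \perp \langle \overline{y}_r \rangle$ is an orthogonal sum of free rank $1$ non-degenerate Hermitian submodules, because each $b(\overline{y}_j, \overline{y}_j) = (b(\overline{x}_{1,j},\overline{x}_{1,j}),\ldots,b(\overline{x}_{n,j},\overline{x}_{n,j}))$ is a unit in $R/I$ (all components are units).

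Finally I would iteratively apply Lemma \ref{lem:semiloc_quot_form}. Taking $\overline{F} = \langle \overline{y}_1 \rangle$ and $\overline{G} = \langle \overline{y}_2 \rangle \perp \cdots \perp \langle \overline{y}_r \rangle$, the lemma produces an orthogonal decomposition $E = F_1 \perp G_1$ with $F_1$ a free non-degenerate rank $1$ Hermitian submodule reducing to $\langle \overline{y}_1 \rangle$, and $G_1$ a Hermitian module of constant rank $r-1$ whose reduction modulo $I$ is $\overline{G}$. Repeating the argument on $G_1$, and inducting on $r$, we obtain the desired orthogonal decomposition $E = F_1 \perp F_2 \perp \cdots \perp F_r$ into free rank $1$ non-degenerate Hermitian summands, i.e.\ a diagonalization of $(E,b)$.

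The main obstacle is ensuring that the rank $1$ summands pieced together over the residue ring $R/I \cong \prod k_i$ are actually \emph{free} and \emph{non-degenerate} over $R/I$, rather than merely free over each factor $k_i$. This is exactly where the constant rank hypothesis is essential: without it the orthogonal bases produced over each $k_i$ would have varying lengths and could not be bundled into elements of $\overline E$ that generate free rank $1$ submodules, so Lemma \ref{lem:semiloc_quot_form} could not be applied directly.
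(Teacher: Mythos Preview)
Your proposal is correct and follows essentially the same approach as the paper: reduce modulo the Jacobson radical via the Chinese Remainder Theorem, use Lemma~\ref{lem:prod_herm} and Theorem~\ref{thm:field_diag} to diagonalize over the product of residue fields (with the constant-rank hypothesis allowing the orthogonal bases over each $k_i$ to be assembled into rank-one summands over $R/I$), and then lift via Lemma~\ref{lem:semiloc_quot_form}. The paper's proof is terser---it simply asserts that the result ``will follow from Lemma~\ref{lem:semiloc_quot_form}'' once diagonalization over $R/I$ is established---whereas you spell out the necessary iteration, peeling off one free rank-one summand at a time; this is the right way to fill in that step, since a single application of the lemma only yields a free (not necessarily diagonal) $F$.
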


\begin{proof}
By the Chinese Remainder Theorem, $R/(m_1 \cap \dots \cap m_n) \cong
R/m_1 \times \cdots \times R/m_n = F_1 \times \dots \times F_n$. We
claim that Hermitian forms over finite products of fields
diagonalize, and then the result will follow from Lemma \ref{lem:semiloc_quot_form}. By induction and Lemma
\ref{lem:prod_herm}, a Hermitian module $M$
is determined by Hermitian modules $M_i$ over $F_i$, $i =
1,\dots,n$ as $M = M_1 \oplus M_2 \oplus \cdots \oplus M_n$ with
action $(f_1,\dots,f_n) \cdot (m_1,\cdots,m_n) =
(f_1m_1,\dots,f_nm_n)$. By Theorem \ref{thm:field_diag}, each $M_i$ can be diagonalized into $M_i = \langle a_{1,i}
\rangle \perp \dots \perp \langle a_{m,i}\rangle$ (it's important to
note here that the rank of each $M_i$ is the same by assumption). Thus a
diagonalization of $M$ is given by $\langle (a_{1,1}, \dots  ,a_{1,n})\rangle
 \perp \dots \perp \langle (a_{m,1},
\dots ,a_{m,n})\rangle$. 
\end{proof}

\begin{corollary}
Let $R$ be a local ring with trivial involution and with $\frac{1}{2} \in R$. Then any Hermitian module (which is necessarily free)
over $R$ diagonalizes. 
\end{corollary}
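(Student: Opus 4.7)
The plan is to obtain this corollary immediately as a special case of Corollary \ref{cor:ConstRankTrivInv}. First I would observe that a local ring, having a single maximal ideal, is a fortiori semi-local in the sense required there. Next I would invoke the standard fact that every finitely generated projective module over a local ring is free (this explains the parenthetical remark in the statement), so that in particular every Hermitian module over $R$ is of constant rank. With these observations in place, the corollary is a direct application of Corollary \ref{cor:ConstRankTrivInv}.

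If one wished to see the argument unpacked in this special case, the proof simplifies considerably. The Chinese Remainder Theorem step is vacuous since there is only one maximal ideal $m$, and because $\tfrac{1}{2} \in R$ forces $2 \notin m$, the residue field $R/m$ has characteristic different from $2$. One then applies Theorem \ref{thm:field_diag} to obtain an orthogonal diagonalization of the reduction $\overline{E} = E/mE$ over $R/m$, and lifts this orthogonal decomposition back to $E$ by iterating Lemma \ref{lem:semiloc_quot_form} with $I = m$, which sits inside $\mathrm{Jac}(R) = m$ trivially and is fixed by the (here trivial) involution. There is no real obstacle: the corollary is included essentially for convenient reference in the common situation of a local base, which is what appears when testing whether a map of presheaves is a local equivalence on isovariant étale stalks.
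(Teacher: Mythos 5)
Your proposal matches the paper's intended argument: the corollary is stated without proof, as an immediate specialization of Corollary \ref{cor:ConstRankTrivInv}, using that a local ring is semi-local and that finitely generated projective modules over a local ring are free (hence of constant rank). Your unpacking via Theorem \ref{thm:field_diag} and Lemma \ref{lem:semiloc_quot_form} with $I = m$ is a correct expansion of that same argument, not a different route.
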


\begin{lemma}\label{lem:hyp_switch}
Let $R$ be a ring, and consider the ring $R \times R$ with the
involution that switches factors. Then any module $M$ can be
written as $e_1M \oplus e_2M$ as in the proof of Lemma \ref{lem:prod_herm}. A non-degenerate Hermitian form on this
module is determined by a map $e_1M \otimes e_2M \rightarrow R \times
R$. In other words, the matrix representing the map $e_1M \oplus e_2M
\rightarrow e_1M^* \oplus e_2M^*$ has the form
\[
\begin{bmatrix}
0 & A \\
\overline{A}^t & 0
\end{bmatrix}.
\]

where $A$ is invertible. 
\end{lemma}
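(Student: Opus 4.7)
The plan is to exploit the orthogonal idempotents $e_1 = (1,0)$ and $e_2 = (0,1)$ of $R \times R$, noting crucially that the swap involution sends $\overline{e_1} = e_2$ and $\overline{e_2} = e_1$. The decomposition $M = e_1 M \oplus e_2 M$ is already supplied by the proof of Lemma \ref{lem:prod_herm}; what remains is to use the sesquilinearity of $b$ to read off its block structure.

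First I would compute $b$ on each pair of summands. Applying the relations $b(xa,y) = \overline{a}\, b(x,y)$ and $b(x,ya) = b(x,y)\, a$ from the definition of a Hermitian form, one obtains
\[
b(m_1 e_1,\, m_2 e_1) \;=\; \overline{e_1}\, b(m_1,m_2)\, e_1 \;=\; e_2 e_1\, b(m_1,m_2) \;=\; 0,
\]
and by the same calculation $b(m_1 e_2, m_2 e_2) = 0$. The surviving pieces live in complementary factors: $b(m_1 e_1, m_2 e_2) \in e_2 (R \times R)$ and $b(m_1 e_2, m_2 e_1) \in e_1 (R \times R)$, and Hermitian symmetry $b(x,y) = \overline{b(y,x)}$ swaps the two. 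Hence $b$ is determined by the single $\mathbb{Z}$-bilinear pairing $e_1 M \otimes e_2 M \to R \times R$, which is the first assertion of the lemma.

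Next I would translate this into the block form of the adjoint $\phi : M \to M^*$. The decomposition $M = e_1 M \oplus e_2 M$ induces a decomposition $M^* = e_1 M^* \oplus e_2 M^*$, where taking duals (via $\sigma_* M$) swaps the roles of $e_1$ and $e_2$. The vanishing computations above force the diagonal blocks of $\phi$ to be zero, so
\[
\phi = \begin{bmatrix} 0 & A \\ B & 0 \end{bmatrix}
\]
for some $A : e_2 M \to e_1 M^*$ and $B : e_1 M \to e_2 M^*$. The Hermitian identity $\phi = \phi^* \can_M$ expands block-wise into the single relation $B = \overline{A}^t$, where the bar denotes the entrywise swap and the transpose comes from the dual pairing. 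Non-degeneracy is invertibility of $\phi$, and for a block-antidiagonal $\phi$ this reduces to invertibility of $A$ alone, since invertibility of $\overline{A}^t$ follows automatically.

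The main bookkeeping obstacle is carefully verifying that the Hermitian condition $\phi = \phi^* \can_M$ forces exactly $B = \overline{A}^t$; this requires tracking the twist by the swap involution in the definition of $M^*$ as $\Hom_R(\sigma_* M, R)$, and in the double-dual identification of Definition \ref{def:double_dual_iso}. Once that identification is made explicit the remaining content of the lemma is purely formal linear algebra over $R \times R$.
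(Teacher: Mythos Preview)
Your proposal is correct and follows essentially the same approach as the paper: both use $\overline{e_1}=e_2$ and the orthogonality $e_1e_2=0$ to kill the diagonal blocks via $b(e_1x,e_1y)=\overline{e_1}e_1\,b(x,y)=0$, then read off the antidiagonal block form from the direct sum decomposition. Your write-up is in fact more thorough than the paper's, which dispatches the matrix statement in a single sentence without spelling out the $B=\overline{A}^t$ verification you carry out.
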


\begin{proof}
The first claim is just that $b(e_1x,e_1y) = 0 = b(e_2x,e_2y)$ for any
$x,y \in M$. This follows because $b(e_1x,e_1y) = b(e_1^2x,e_1^2y) =
\overline{e_1}e_1b(e_1x,e_1y) = e_2e_1b(e_1x,e_1y) = 0$. Similarly for
$b(e_2x,e_2y)$. The statement about the matrix follows by identifying
the map $M \otimes \overline M \rightarrow R \times R$ with an isomorphism $M
\rightarrow \overline M^*$ and using the direct sum decomposition. 
\end{proof}

\begin{corollary} \label{cor:HermFormsOverSwitch} 
Let $R,M$ be as in lemma \ref{lem:hyp_switch} and such that $\frac{1}{2} \in R$. 
Then $M \cong H(e_1M)$, where $H$ denotes
the hyperbolic module functor. 
\end{corollary}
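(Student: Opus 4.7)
The plan is to convert the matrix form provided by Lemma \ref{lem:hyp_switch} into an explicit isometry $M \xrightarrow{\sim} H(e_1M)$, with the invertible block $A$ doing all the real work. First I would reinterpret $A$: the expression
\[
\phi = \begin{bmatrix}0 & A\\ \overline{A}^t & 0\end{bmatrix}
\]
for the adjoint $\phi:M \to M^*$ of the form says precisely that $A$ is an isomorphism $A:e_2M \xrightarrow{\sim} (e_1M)^*$ of $R\times R$-modules, characterized by $A(y)(x) = b(x,y)$ for $x \in e_1M$ and $y \in e_2M$.

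Next I would fix the concrete model $H(e_1M) = e_1M \oplus (e_1M)^*$ equipped with the hyperbolic form whose adjoint matrix is $\begin{bmatrix}0 & I\\ I & 0\end{bmatrix}$; explicitly, the form pairs $(x_1,f_1)$ with $(x_2,f_2)$ by $f_2(x_1) + \overline{f_1(x_2)}$. Observe that the $R\times R$-module structure on $H(e_1M)$ splits along the idempotents $e_1,e_2$ with $e_1M$ in the $e_1$ component and $(e_1M)^*$ in the $e_2$ component, matching the splitting $M = e_1M \oplus e_2M$. Then I would define $\Phi:M \to H(e_1M)$ by $(x,y)\mapsto (x,A(y))$; this is an $R\times R$-module isomorphism because $A$ is.

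The last step is a direct computation verifying that $\Phi$ is an isometry:
\[
h(\Phi(x_1,y_1),\Phi(x_2,y_2)) = A(y_2)(x_1) + \overline{A(y_1)(x_2)} = b(x_1,y_2) + \overline{b(x_2,y_1)} = b(x_1,y_2) + b(y_1,x_2),
\]
and this agrees with $b((x_1,y_1),(x_2,y_2))$ because the diagonal cross-terms $b(x_1,x_2)$ and $b(y_1,y_2)$ vanish by Lemma \ref{lem:hyp_switch}.

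The main obstacle is really just bookkeeping: keeping straight the several equivalent conventions for the dual module in the presence of an involution and for the matrix representation of a Hermitian form. Once conventions are pinned down, the whole argument amounts to a single change of basis converting $\begin{bmatrix}0 & A\\ \overline{A}^t & 0\end{bmatrix}$ into $\begin{bmatrix}0 & I\\ I & 0\end{bmatrix}$. The hypothesis $\frac{1}{2}\in R$ is not used directly here; it is inherited as a global standing assumption for the theory of Hermitian forms.
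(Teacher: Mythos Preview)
Your argument is correct. You directly construct the isometry $\Phi:(x,y)\mapsto(x,Ay)$ and verify it intertwines the given form with the standard hyperbolic form on $e_1M\oplus(e_1M)^*$; the computation is sound and the bookkeeping about module structures on the two summands checks out.

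The paper takes a different route: it observes that $e_1M$ is a Lagrangian, i.e.\ $e_1M=(e_1M)^\perp$, and then invokes \cite[Corollary 3.7.3]{HermKnus}, a general result saying that a nondegenerate form admitting a Lagrangian direct summand is hyperbolic on it. To apply Knus's statement as phrased, the paper first notes that $\frac{1}{2}\in R$ forces $M$ to be an \emph{even} Hermitian space. Your explicit change-of-basis argument bypasses this citation entirely and, as you correctly remark, does not actually use the invertibility of $2$. So your approach is more elementary and slightly sharper in its hypotheses, while the paper's approach situates the result within Knus's general framework for metabolic and hyperbolic forms. Under the hood, of course, the proof of Knus's corollary is essentially the same matrix manipulation you carry out.
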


\begin{proof}
The assumption that 2 is invertible implies that $M$ is an even
Hermitian space in the notation of Knus. Now by Lemma \ref{lem:hyp_switch}
$b|_{e_1M} = 0$, so $M$ has direct summands $e_1M, e_2M$ such that $e_1M =
e_1M^{\perp}$ and $M = e_1 M \oplus e_2M$. Now \cite[Corollary 3.7.3]{HermKnus} 
applies to finish the proof.
\end{proof}

\begin{corollary}\label{cor:herm_diag}
Let $R$ be a semi-local ring with involution and with 2 invertible. Then any Hermitian
module of constant rank over $R$ diagonalizes.
\end{corollary}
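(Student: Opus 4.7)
The plan is to reduce modulo the Jacobson radical $J = \mathrm{Jac}(R)$, diagonalize over $R/J$, and then lift using Lemma \ref{lem:semiloc_quot_form}. Since $\sigma$ permutes the finitely many maximal ideals of $R$, it stabilizes $J$ and descends to an involution on $R/J$. By the Chinese Remainder Theorem, $R/J \cong \prod_i F_i$ is a finite product of residue fields, on which $\sigma$ acts by permuting the factors. Regrouping fixed factors separately from swapped pairs, I can rewrite
\[
R/J \;\cong\; \prod_\alpha F_\alpha \;\times\; \prod_\beta (F_\beta \times F_\beta),
\]
where each $F_\alpha$ carries an involution (possibly trivial) and each $F_\beta \times F_\beta$ carries the switch involution.

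The idempotents underlying this coarser decomposition are fixed by $\sigma$, so the proof of Lemma \ref{lem:prod_herm} applies verbatim to split any constant-rank-$n$ Hermitian module $\overline M$ over $R/J$ as an orthogonal sum of rank-$n$ Hermitian modules, one over each factor. On each $F_\alpha$-piece, Theorem \ref{thm:field_diag} furnishes an orthogonal basis; the hypothesis $1/2 \in R$ ensures that both cases of that theorem (trivial involution with $\mathrm{char} \neq 2$, and non-trivial involution) are available. On each $F_\beta \times F_\beta$-piece with switch involution, Corollary \ref{cor:HermFormsOverSwitch} shows that every Hermitian module is hyperbolic, and since $H$ commutes with direct sums it suffices to diagonalize the rank-one hyperbolic $H(F_\beta)$. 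This is automatic: a rank-one Hermitian module over any commutative ring with involution is determined by a single symmetric unit and is therefore already in diagonal form. Hence $\overline M = \langle \overline{r_1} \rangle \perp \cdots \perp \langle \overline{r_n} \rangle$ for symmetric units $\overline{r_i} \in (R/J)^\times$.

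To lift the decomposition, I induct on the rank. Given $\overline M = \langle \overline{r_1} \rangle \perp \overline{M'}$, Lemma \ref{lem:semiloc_quot_form} applied with $I = J$ produces a free non-degenerate rank-one summand $\langle r_1 \rangle \subset M$ reducing to $\langle \overline{r_1} \rangle$; here $r_1 = b(x_1, x_1)$ for any lift $x_1$ of a basis vector is automatically symmetric since $b$ is Hermitian, and is a unit in $R$ because its image in $R/J$ is a unit and $J$ is the Jacobson radical. The orthogonal complement $M_1 \subset M$ has constant rank $n-1$ over $R$ and reduces modulo $J M_1$ to $\overline{M'}$, so the inductive hypothesis completes the diagonalization.

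The main subtlety is the bookkeeping around the Jacobson radical and the swapped maximal ideals: one has to check that $J$ is involution-stable, that the product decomposition of $R/J$ can be regrouped so that the involution factors through the pieces, and that the rank-one hyperbolic modules occurring on the swapped-pair factors are genuinely diagonal (as noted above, this is immediate). After these verifications the argument is a direct extension of the proof of Corollary \ref{cor:ConstRankTrivInv}.
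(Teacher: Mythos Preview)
Your proof is correct and follows essentially the same route as the paper's: reduce modulo the involution-stable Jacobson radical, regroup the resulting product of fields into factors fixed setwise by $\sigma$ and swapped pairs, diagonalize each piece via Theorem~\ref{thm:field_diag} and Corollary~\ref{cor:HermFormsOverSwitch}, and lift via Lemma~\ref{lem:semiloc_quot_form}. You are simply more explicit than the paper in two places---spelling out why the hyperbolic pieces are diagonal (reducing to the rank-one case) and making the lifting step an induction on rank rather than a single application of the lemma---but the architecture is the same.
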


\begin{proof}
Using Lemma \ref{lem:semiloc_quot_form} and reducing modulo the Jacobson radical
(which is always stable under the involution), it
suffices to prove the corollary for $R$ a finite product of
fields. Then $R = F_1 \times \dots \times F_n$ is semi-simple, and
hence we can index the fields in a particularly nice way (proof is by
considering idempotents), writing $R = A_1 \times \dots \times A_m \times B_1 \times
\dots B_{n-m}$ such that $A_i$ is fixed set-wise by the involution, and
$\sigma(B_{2i}) = B_{2i+1}$, $\sigma(B_{2i+1}) = B_{2i}$. Now, any
finitely generated module $M$ can be written as a direct sum $M =
\bigoplus_{i=1}^m M_i \bigoplus_{i=1}^{\frac{n-m}{2}} N_{2i} \oplus
N_{2i-1}$. By Theorem \ref{thm:field_diag} and Corollary \ref{cor:HermFormsOverSwitch}, the form when restricted to each
$M_i$ or $N_{2i} \oplus N_{2i-1}$ is diagonalizable, so the form is
diagonalizable (see the proof of Corollary \ref{cor:ConstRankTrivInv}).   
\end{proof}

\begin{lemma}\label{lem:semiloc_herm_form_rank}
Non-degenerate Hermitian vector bundles are determined by rank over strictly
henselian local rings $(R,m)$ with $\frac{1}{2} \in R$ such that the residue field $R/m$ has
trivial involution.
\end{lemma}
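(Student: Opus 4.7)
The plan is to show any non-degenerate Hermitian bundle $(V,\phi)$ of rank $n$ over such $R$ is isometric to $\langle 1 \rangle^{\perp n}$, from which the result follows. Since $R$ is local, $V$ is free. Since $R$ is in particular semi-local and $\frac{1}{2} \in R$, Corollary \ref{cor:herm_diag} diagonalizes the form as $\langle a_1 \rangle \perp \cdots \perp \langle a_n \rangle$ with each $a_i \in R^\times$ and $\overline{a_i} = a_i$ (Hermitian symmetry applied to $b(x_i,x_i)$). So it would suffice to show that every rank-one form $\langle a \rangle$ with $\overline{a} = a$ is isometric to $\langle 1 \rangle$.

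Multiplication by a unit $b \in R^\times$ furnishes an isometry $\langle a \rangle \cong \langle a\, b \overline{b} \rangle$, so $\langle a \rangle \cong \langle 1 \rangle$ exactly when $a$ is a norm $u \overline{u}$ for some $u \in R^\times$. I would in fact show the stronger statement that $a$ is the square of an involution-fixed unit. Since $\frac{1}{2} \in R$, the polynomial $f(x) = x^2 - a \in R[x]$ has discriminant $-4a \in R^\times$, so it is separable. The residue field $k = R/m$ is separably closed (this is built into strict henselization), so $f$ has a root $\bar{u} \in k^\times$, and Hensel's lemma lifts $\bar{u}$ uniquely to $u \in R$ with $u^2 = a$ and $u \equiv \bar{u} \pmod{m}$.

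It then remains to verify $\overline{u} = u$. Applying the involution to $u^2 = a$ gives $\overline{u}^2 = \overline{a} = a$, so $\overline{u}$ is also a root of $f$ in $R$. Since $f$ has exactly two roots $\pm \bar{u}$ in $k$ (distinct because $\bar{u} \in k^\times$ and $\mathrm{char}\, k \neq 2$), Hensel's lemma produces exactly two roots $\pm u$ in $R$, so $\overline{u} \in \{u, -u\}$. Because the induced involution on $k$ is trivial, $\overline{u} \equiv \bar{u} \pmod{m}$, while $-u \equiv -\bar{u} \not\equiv \bar{u} \pmod{m}$. Hence $\overline{u} = u$, so $a = u \overline{u}$ is a norm, and $\langle a \rangle \cong \langle 1 \rangle$.

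The main obstacle is organizing the three hypotheses so that Hensel's lemma pins down a fixed-point root rather than an arbitrary square root: $\frac{1}{2} \in R$ both makes $x^2 - a$ separable (so Hensel applies) and distinguishes $u$ from $-u$ modulo $m$; strict henselization supplies the separably closed residue field needed for the initial root; and the trivial residue involution selects the unique Hensel lift that lies in the involution-fixed subring. Everything else is a reduction handled by the semi-local diagonalization already established.
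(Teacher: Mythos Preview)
Your proof is correct and follows essentially the same approach as the paper: diagonalize via Corollary~\ref{cor:herm_diag}, reduce to rank one, extract a square root using strict henselianness, and show it is involution-fixed. The only cosmetic difference is in that last step: the paper argues directly that $(c+\overline{c})(c-\overline{c}) = c^2 - \overline{c}^2 = 0$ with $c+\overline{c}$ a unit, while you phrase the same dichotomy via the uniqueness clause of Hensel's lemma.
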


\begin{proof}
By Corollary \ref{cor:herm_diag}, any Hermitian vector bundle 
 over $R$
diagonalizes. Thus it suffices to prove that any two non-degenerate
Hermitian vector bundles of rank 1 are isometric. 

A non-degenerate rank 1 Hermitian vector bundle corresponds to a unit
$x \in R^\times$ such that $x = \overline x$ (a one dimensional
Hermitian matrix). Because $R$ is strictly henselian, there is a
square root $c$ of $x^{-1}$. We claim that $c = \overline c$. Assume
not. Then because the involution on $R/m$ is trivial, $c - \overline
c \in m$. Since 2 is invertible, we have $c = \frac{c + \overline
  c}{2} + \frac{c-\overline c}{2}$. It follows that $\frac{c+\overline
  c}{2}$ is a unit. Otherwise it would be contained in $m$ which would
imply that the unit $c$ was contained in $m$. 

However, we calculate $(c + \overline c)(c - \overline c) = c^2 -
\overline c^2$. But $(\overline c)^2 = \overline{(c^2)} = \overline x^{-1}
= x^{-1}$, so that $(c + \overline c)(c - \overline c) = 0$. Because $c +
\overline c$ is a unit, it follows that $c - \overline c = 0$. 

This shows that given any one dimensional Hermitian matrix $x$,
there's a unit $c$ such that $c x \overline c = 1$ so that all one
dimensional Hermitian forms are isometric to the form $\langle 1 \rangle$.
\end{proof}

\begin{corollary}\label{cor:loc_det_rank}
Non-degenerate Hermitian vector bundles are locally determined by rank in the isovariant \'etale topology.
\end{corollary}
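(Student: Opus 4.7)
The plan is to reduce the question to a stalk-level assertion using the description of points in the isovariant étale topology recalled earlier in the paper: a point is of the form $G \times^{G_x} \Spec(\mathcal{O}^{sh}_{X,\overline{x}})$ for a geometric point $\overline{x} \to X$. Two non-degenerate Hermitian vector bundles $(V,\phi)$ and $(V',\phi')$ of the same rank over a $C_2$-scheme $X$ are isometric isovariant-étale locally if and only if the presheaf $\underline{\mathrm{Isom}}((V,\phi),(V',\phi'))$ has nonempty sections at each such stalk, so it suffices to show that after restriction to $G \times^{G_x} \Spec(\mathcal{O}^{sh}_{X,\overline{x}})$ any two non-degenerate Hermitian vector bundles of the same rank become isometric.

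For $G = C_2$ there are exactly two cases, corresponding to the two possible isotropy groups, and I would handle them separately. If $G_x = C_2$, then the stalk is $\Spec(R)$ for $R = \mathcal{O}^{sh}_{X,\overline{x}}$, which is a strictly henselian local ring with $\tfrac{1}{2} \in R$ and with trivial involution on the residue field. Lemma \ref{lem:semiloc_herm_form_rank} applies directly and shows that any non-degenerate Hermitian vector bundle of rank $n$ is isometric to the standard diagonal form $\langle 1, \ldots, 1\rangle$, so any two such forms of the same rank are isometric over this stalk.

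If instead $G_x = \{e\}$, then the stalk is $\Spec(R \times R)$ with the involution swapping factors, where again $R = \mathcal{O}^{sh}_{X,\overline{x}}$ is strictly henselian local with $\tfrac{1}{2} \in R$. Corollary \ref{cor:HermFormsOverSwitch} implies that any non-degenerate Hermitian module $M$ of rank $n$ in this situation is isometric to $H(e_1 M)$, and since $R$ is local the finitely generated projective module $e_1 M$ is free of rank $n$. Hence every non-degenerate Hermitian vector bundle of rank $n$ is isometric to the standard hyperbolic form $H(R^n)$, and again any two such bundles of the same rank are isometric at the stalk.

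Combining the two cases, at every point of the isovariant étale topos the sheaf of isometries between $(V,\phi)$ and $(V',\phi')$ is nonempty, so the claim follows by unwinding what it means to be locally determined up to isometry. The only mildly subtle step is making sure that the local triviality at stalks really does upgrade to the existence of an isovariant étale cover trivializing any chosen pair of non-degenerate bundles, but this is a standard sheafification argument once the stalk calculations above are in place; both cases are reductions to the already established Lemma \ref{lem:semiloc_herm_form_rank} and Corollary \ref{cor:HermFormsOverSwitch}, which do the real work.
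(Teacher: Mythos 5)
Your proof follows essentially the same route as the paper's: both split on the two possible isotropy groups $G_x = C_2$ and $G_x = \{e\}$, apply Lemma \ref{lem:semiloc_herm_form_rank} in the strictly henselian local case, and apply Corollary \ref{cor:HermFormsOverSwitch} (plus freeness of projectives over a local ring) in the hyperbolic case. The one small point the paper makes explicit that you gloss over is that $\mathcal O^{sh}_{X,\overline x}\times \mathcal O^{sh}_{X,\overline x}$ with the twisted involution $(x,y)\mapsto(i(y),i(x))$ must first be identified with a standard hyperbolic ring via $(x,y)\mapsto(x,i(y))$ before Corollary \ref{cor:HermFormsOverSwitch} applies, but this is a cosmetic difference.
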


\begin{proof}
The points in the isovariant \'etale topology are either strictly
henselian local rings whose residue field has
trivial involution or a ring of the form $\mc O_{X,x}^{sh} \times \mc
O_{X,x}^{sh}$ with involution $(x,y) \mapsto (i(y),i(x))$. Via the map
$(x,y) \mapsto (x,i(y))$, such rings are isomorphic to hyperbolic
rings. 

If the ring is a stricty henselian local ring whose residue field has
trivial involution, Lemma \ref{lem:semiloc_herm_form_rank} shows that
non-degenerate Hermitian forms are determined by rank. If the
ring is hyperbolic, then by Corollary \ref{cor:HermFormsOverSwitch}  all non-degenerate Hermitian forms over the
ring are hyperbolic forms of projective modules over a local
ring. Since projective modules over a local ring are determined by
rank, the corresponding hyperbolic forms are determined by rank. 
\end{proof}

\subsection{Higher Grothendieck-Witt Groups}\label{sec:Higher_GW}

In \cite{Xie2018ATM}, the author works with coherent Grothendieck-Witt
groups on a scheme. Because the negative $K$-theory of the category of
bounded complexes of quasi-coherent $\mc O_X$-modules with coherent
cohomology vanishes (together with the pullback square relating the
homotopy fixed points of $K$-theory to Grothendieck-Witt theory), there is no difference between the additive and
localizing versions of Grothendieck-Witt spectra in this
setting. 

Therefore, we work instead with Grothendieck-Witt spectra of
$\sPerf(X) = \mathrm{Ch}^b\Vect(X)$, the dg category of strictly
perfect complexes on $X$. We review the relevant definitions from
\cite{Schder} now.

\begin{definition}
A \emph{pointed dg category with duality} is a triple $(\mc
A,\vee,\can)$ where $\mc A$ is a pointed dg category, $\vee : \mc
A^{op} \rightarrow \mc A$ is a dg functor called the duality functor,
and $\can : 1 \rightarrow \vee \circ \vee^{op}$ is a natural
transformation of dg functors called the double dual identification
such that $\can_A^\vee \circ \can_{A^\vee} = 1_{A^\vee}$ for all
objects $A$ in $\mc A$. 
\end{definition}

\begin{remark}
A dg category with duality has an underlying
exact category with duality $(Z^0\mc
A^{\ptr},\vee,\can)$, where $Z^0\mc A^{\ptr}$ has the same objects
as $\mc A^{\ptr}$ but the morphism sets are the zero cycles in the morphism
complexes of $\mc A^\ptr$. Here $\mc A^\ptr$ is the pretriangulated hull
of $\mc A$ (see \cite{Schder} definition 1.7).
\end{remark}

\begin{definition}
A \emph{dg category with weak equivalences} is a pair $(\mc A,w)$
where $\mc A$ is a pointed dg category and $w \subseteq Z^0 \mc
A^\ptr$ is a set of morphisms which saturated in $\mc A$. A map $f$ in
$w$ is called a weak equivalence.
\end{definition}

\begin{definition}
Given a pointed dg category with duality $(\mc A,\vee,\can)$, a
Hermitian object in $\mc A$ is a pair $(X,\phi)$ where $\phi : X
\rightarrow X^{\vee}$ is a morphism in $\mc A$ satisfying
$\phi^{\vee}\can_X = \phi$. 
\end{definition}

\begin{definition}
A \emph{dg category with weak equivalences and duality} is a quadruple
$\mathscr A = (\mc A, w, \vee, \can)$ where $(\mc A,w)$ is a dg
category with weak equivalences and $(\mc A,\vee,\can)$ is a dg
category with duality such that the dg subcategory $\mc A^w \subset
\mc A$ of $w$-acyclic objects is closed under the duality functor
$\vee$ and $\can_A : A \rightarrow A^{\vee\vee}$ is a weak equivalence
for all objects $A$ of $\mc A$. 
\end{definition}

\begin{definition}
Let $\mathscr A = (\mc A,w,\vee,\can)$ be a dg category with weak
equivalences and dualiy. A symmetric space in $\mathscr A^{\mathrm{ptr}}$ 
is a Hermitian object $A$ whose dual map $\phi : A \rightarrow A^\vee$ is
a weak equivalence in $\mathscr A^{\mathrm{ptr}}$. 
The Grothendieck-Witt group $GW_0(\mathscr
A)$ of $\mathscr A$ is the abelian group generated by symmetric
spaces
$[X,\phi]$ in the underlying category with weak equivalences and
duality $(Z^0\mc A^{ptr},w,\vee,\can)$, subject to the following
relations:
\begin{enumerate}
\item $[X,\phi] + [Y,\psi] = [X \oplus Y,\phi \oplus \psi]$
\item if $g : X \rightarrow Y$ is a weak equivalence, then $[Y,\psi] =
  [X,g^\vee \psi g]$, and
\item if $(E_\bullet,\phi_\bullet)$ is a symmetric space in the
  category of exact sequences in $Z^0\mc A^\ptr$, that is, a map
\[
\xymatrix{E_\bullet :  \ar[d]_\sim^{\phi_\bullet} \\ E^\vee_\bullet :}
\qquad \xymatrix{E_{-1} \, \ar@{>->}[r]^i \ar[d]_\sim^{\phi_{-1}} &
  E_0 \ar@{->>}[r]^p \ar[d]_\sim^{\phi_0} & E_1 \ar[d]_\sim^{\phi_1}
  \\
E_{1}^\vee \, \ar@{>->}[r]^{p^\vee} &
  E_0^\vee \ar@{->>}[r]^{i^\vee} & E_1^\vee }
\]
of exact sequences with $(\phi_{-1},\phi_0,\phi_1) = (\phi_1^\vee
\can, \phi_0^\vee \can, \phi^\vee_{-1}\can)$ a weak equivalence, then
\[
[E_0,\phi_0] = \left[E_{-1} \oplus E_1, \begin{pmatrix}
0 & \phi_1\\
\phi_{-1} & 0
\end{pmatrix} \right].
\]
\end{enumerate}
\end{definition}

\begin{definition}\label{def:GW_spectra}
Given a dg-category with weak equivalences and duality $\mathscr A = (\mc A,w,\vee,\can)$, Schlichting
defines \cite[Section 4.1]{Schder}  a functorial monoidal symmetric spectrum
$GW(\mathscr A)$ using a modified version of the Waldhausen $\mc
S_\bullet$ construction. For the sake of brevity, we don't reproduce
his construction here. 

Noting in general that $GW$ doesn't sit in a localization sequence,
Schlichting defines a localizing variant, $\mbb GW$ in \cite[Section
8.1]{Schder} as a bispectrum. The reason Schlichting defines
$\mbb GW$ as an object in bispectra rather than spectra is to get a monoidal structure
on $\mbb GW$. We provide an alternative approach to producing $\mbb
GW$ via periodization in section \ref{chap:Einf}. 
\end{definition}

\begin{definition}
Let $X$ be a Noetherian scheme of finite Krull dimension with an ample
family of line bundles, and let
$\sigma : X \rightarrow X$ be an involution on $X$. Let $\sPerf(X)$
denote the category of strictly perfect complexes on $X$ with the weak
equivalences being the quasi-isomorphisms. Define a family of dualities on
$\sPerf(X)$ indexed by $i \in \mbb N$ by 
\[
\ast^i : E \mapsto \iHom_{\sPerf(X)}(\sigma_*E,\mc O_X[i]).
\]
Note that because $\sigma$ is an involution, $\sigma_*E$ is a
strictly perfect complex. Define the canonical isomorphim $\can$ as in Definition
\ref{def:double_dual_iso} as the adjoint of the evaluation map
\[
ev : E \otimes \sigma_*\iHom_{\sPerf(X)}(\sigma_* E, \mc O_X[i])
\rightarrow \mc O_X[i].
\]
Combining all this data we get a collection of dg categories with weak equivalences
and duality 
\[(\sPerf(X),\text{q. iso}, \ast^i,\can).
\]

The $i$th shifted Grothendieck-Witt spectrum of $(X,\sigma)$ is
defined as
\[
GW^{[i]}(X,\sigma) = GW (\sPerf(X),\text{q. iso}, \ast^i,\can).
\]

If $Z$ is an invariant closed subset of $X$, then the duality on
$\sPerf(X)$ restricts to a duality on the subcategory of complexes
supported on $Z$, $\sPerf_Z(X)$. We define 
\[
GW^{[i]}(X \text{ on } Z) = GW (\sPerf_Z(X),\text{q. iso}, \ast^i,\can).
\]
\end{definition}

\section{Representability of Automorphism Groups of Hermitian Forms}\label{chap:herm_grass}

Representability of $K$-theory in the stable motivic homotopy category
allows one to check that $K$-theory pulls back nicely. In particular,
given $f : X \rightarrow S$ a map of schemes over $S$, one can use
ind-representability of $\KGL$ to show that
$f^*(\KGL_S) = \KGL_X$. Together with the formalism of six operations
in motivic homotopy theory, one obtains rather formally cdh descent for algebraic
$K$-theory, see \cite{DenRep}.

The goal of
this section is to define a sheaf on $\Sm{S,qp}^{C_2}$, denoted $\RGr$, which represents
Hermitian $K$-theory in the motivic homotopy category $\mathcal
\mothom_S^{C_2}$. We first check that over a regular base $S$ with 2 invertible
(e.g. $\Z[\frac{1}{2}]$), Hermitian $K$-theory is representable in the
category of $C_2$-schemes over $S$, $\Sm{S,qp}^{C_2}$. To
extend this result to non-regular bases $S$, we utilize the
Morel-Voevodsky approach to classifying spaces and obtain
representability of homotopy Hermitian $K$-theory in the motivic homotopy category $\mathcal
\mothom_S^{C_2}$.

By analogy with the $K$ theory case, the equivariant scheme
representing Hermitian $K$-theory on $\Sm{S,qp}^{C_2}$ will be a colimit of schemes
which parametrize non-degenerate Hermitian sub-bundles of a given
Hermitian vector bundle $V$. The new results here are mostly the
definitions, as the proofs in this section are either
minor modifications or identical to the proofs in \cite{SchTri}. The
main difference which might cause concern is that stalks in the
isovariant \'etale topology are now semi-local (rather than
local) rings. 

We combine the techniques of
\cite{SchTri} with a Morel-Voevodsky style argument to compare $\RGr_{2d}(\mbb H^\infty)$ to the isovariant \'etale
classifying space $B_{isoEt}O(\mbb H^{d})$ of the group of automorphisms of
hyperbolic $d$-space. The key to the comparison is that locally in the
isovariant \'etale topology, Hermitian vector bundles are determined
by rank. This will utilize some of the analysis of
Hermitian forms over semi-local rings from section
\ref{sec:herm_form_semiloc}. Note that this is a key difference from
the $K$-theory case where one must pass only to local (rather than
strictly henselian local) rings in order for $K$-theory to be
determined by rank. 

A straightforward generalization of the techniques in \cite{SchTri} allows one to compare $\colim_n B_{isoEt}O(\mbb H^n)(\Delta R)$ to
the Grothendieck-Witt space defined in section \ref{sec:Higher_GW} by viewing them both as group completions
and comparing their homology. This approach is inspired by the
Karoubi-Villamayor definition of higher algebraic $K$-theory. We don't carry out this comparison here as it is 
unnecessary for proving cdh descent. 

\subsection{The definition of the Hermitian Grassmannian $\RGr$}

The definition here describes the sections of the underlying scheme of
$\RGr$ over a scheme $X \rightarrow S$. We advise the hurried reader to
skip to section \ref{subsec:representability}.

\begin{lemma}\label{lem:action_presheaf}
Let $\mc F$ be a presheaf on $\Sm{S,qp}$ and let $a : \mc F \implies
\mc F$ be a natural transformation such that $a \circ a = id_{\mc F}$. Then
there's an associated presheaf on $\Sm{S,qp}^{C_2}$ defined by the formula
$(X,\sigma : X \rightarrow X) \mapsto \mc F(X)^{C_2}$ where the action
of $C_2$ on $\mc F(X)$ is defined by $f \mapsto  a_X\mc F(\sigma)(f)$.
\end{lemma}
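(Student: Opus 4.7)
The plan is to verify, in turn, (i) that for each $C_2$-scheme $(X,\sigma)$ the prescription $\tau_X(f) := a_X \mathcal F(\sigma)(f)$ is actually an involution of $\mathcal F(X)$, so that the fixed-point set $\mathcal F(X)^{C_2}$ is well-defined, and (ii) that every equivariant morphism in $\Sm{S,qp}^{C_2}$ induces a map on these fixed-point sets in a functorial way. Both verifications are straightforward formal consequences of the naturality of $a$, the hypothesis $a \circ a = \mathrm{id}$, and the involutivity of $\sigma$.

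For (i), compute
\[
\tau_X \tau_X(f) = a_X \mathcal F(\sigma) a_X \mathcal F(\sigma)(f).
\]
Naturality of $a$ applied to $\sigma : X \to X$ gives $a_X \mathcal F(\sigma) = \mathcal F(\sigma) a_X$, so this becomes $a_X a_X \mathcal F(\sigma)\mathcal F(\sigma)(f) = \mathcal F(\sigma^2)(f) = f$, using the two hypotheses in turn.

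For (ii), given an equivariant morphism $g : (Y,\tau_Y) \to (X,\sigma_X)$, meaning $g \circ \tau_Y = \sigma_X \circ g$, I need to check that the restriction of $\mathcal F(g) : \mathcal F(X) \to \mathcal F(Y)$ to the fixed-point sets lands in $\mathcal F(Y)^{C_2}$. For $f \in \mathcal F(X)^{C_2}$,
\[
a_Y \mathcal F(\tau_Y)\mathcal F(g)(f) = a_Y \mathcal F(g\tau_Y)(f) = a_Y \mathcal F(\sigma_X g)(f) = a_Y \mathcal F(g)\mathcal F(\sigma_X)(f) = \mathcal F(g) a_X \mathcal F(\sigma_X)(f) = \mathcal F(g)(f),
\]
where the penultimate equality uses naturality of $a$ applied to $g$ and the last uses that $f$ is fixed. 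Functoriality (respect for composition and identities) is then immediate from the corresponding properties of $\mathcal F$, since the action of a morphism on the fixed-point set is just the restriction of the action on the ambient set.

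There is no real obstacle here; the lemma is essentially the observation that a natural self-involution of a presheaf of sets (on the source category) equips its values with a $C_2$-action that is compatible with pullback along any equivariant map, and hence the fixed-point assignment is a well-defined presheaf on the equivariant category. I would present the proof as two short computations as above, noting that one can rephrase the whole lemma categorically as the statement that $(\mathcal F, a)$ defines a presheaf on the $2$-category obtained by formally adjoining a $C_2$-action, and restriction along the canonical functor $\Sm{S,qp}^{C_2} \to \Sm{S,qp}^{BC_2}$ followed by taking fixed points yields a presheaf on $\Sm{S,qp}^{C_2}$; but the elementary verification above suffices for later use.
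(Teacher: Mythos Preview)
Your proof is correct and follows the same elementary verification as the paper's own proof, which only checks part (i)—that the prescription is an involution—via the same naturality argument. You go further by also writing out part (ii), the compatibility with equivariant morphisms, which the paper leaves implicit; your version is strictly more complete but methodologically identical.
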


\begin{proof}
Note that this is indeed a $C_2$-action, since $a_X \mc F(\sigma)(
a_X\mc F(\sigma)(f)) = \mc F(\sigma) a_X(a_X \mc F(\sigma)(f)) = \mc
F(\sigma)(\mc F(\sigma)(f)) = f$ using naturality. 
\end{proof}

Fix a (possibly degenerate) Hermitian vector bundle $(V,\phi)$ over a
base scheme $S$ with 2 invertible and with trivial
involution. The canonical example of such a base scheme is $S = \Spec \Z[\frac{1}{2}]$.

We'll define a presheaf $\RGr : (\Sm{S,qp}^{C_2})^{op} \rightarrow \Set$
by first defining a presheaf on $\Sm{S,qp}$, showing that it's
representable, equipping with an action, then taking the corresponding
representable functor on $\Sm{S,qp}^{C_2}$. We can then extend to an
arbitrary equivariant base $T$ with 2 invertible by pulling back along the unique map $T
\rightarrow \Z[\frac{1}{2}]$.

\begin{itemize}
\item On objects, $\RGr(V)(f: X \rightarrow S)$ for an $S$-scheme $f :
  X \rightarrow S$ is a split surjection $(p,s)$
\[
\xymatrix{f^*V \ar@{>>}[r]_p & \ar@/_1.0pc/@{-->}[l]_sW},
\]

where $W$ is locally free. 

Here by an isomorphism of split surjections we mean a diagram
\[
\xymatrix{f^*V \ar@{>>}[r]_p \ar@{=}[d]& \ar@/_1.0pc/@{-->}[l]_sW \ar[d]^\phi\\
f^*V \ar@{>>}[r]_{p'} & \ar@/_1.0pc/@{-->}[l]_{s'}W'}
\]

such that $\phi$ is an isomorphism satisfying $\phi \circ p = p'$ and $s = s'\circ \phi$.

\item Given a morphism 
\[
\xymatrix{Y\ar[dr]_h \ar[rr]^g &&X\ar[dl]^f\\& S&}
\]
over $S$, define
\[
\RGr_V(g)(\xymatrix{f^*V \ar@{>>}[r]_p & \ar@/_1.0pc/@{-->}[l]^sW}) = 
\xymatrix{h^*V \ar[r]^{can} & g^*f^*V \ar@{>>}[r]_{g^*p} &
  \ar@/_1.0pc/@{-->}[l]_{g^*s} g^*W}.
\]
\end{itemize}
There's a natural action of $C_2$ on $\RGr_V$ whose non-trivial
natural transformation will be denoted $\eta$. Define $\eta$ as
follows: 

 Fix an object $X \in \Sm{S,qp}$. Define
\[
\eta_X(\xymatrix{f^*V \ar@{>>}[r]_p & \ar@/_1.0pc/@{-->}[l]_sW}) =
\xymatrix{f^*V \ar@{>>}[r]_q & \ar@/_1.0pc/@{-->}[l]_t(\ker p)^\perp}.
\]

We'll define the maps $q$ and $t$ now. Let $t'$ denote the canonical map
$\ker p \rightarrow f^*V$, let $q'$ be the map
\[
q' : \xymatrix{f^*V \ar[r]^{\mathrm{id} - (s \circ p)} & f^*V \ar[r]^{\im} & \im(t') \ar[r]^{(t')^{-1}} & \ker p}  
\]
where we've used the identification $\im t' = \im (\mathrm{id} - (s \circ p))$.


Recall that 
\[
W^\perp = \ker(f^*V \xrightarrow{f^*\phi} f^*(V^*) \xrightarrow{can}
(f^*V)^* \xrightarrow{s^*} W^*)
\]
and similarly for $(\ker p)^\perp$.

Leaving out the $can$ map for convenience, we get split exact sequences
\[
\xymatrix{0 \ar[r] & W^\perp \ar[r] & f^*V \ar[r]_{s^*}& \ar@/_1.0pc/[l]_{p^*} W^*
\ar[r] & 0}
\]
and
\[
\xymatrix{0 \ar[r] & (\ker p)^\perp \ar[r] & f^*V \ar[r]_{(t')^*}& \ar@/_1.0pc/[l]_{(q')^*} (\ker p)^*
\ar[r] & 0}.
\]

By the splitting lemma for abelian categories, $f^*V \cong W^\perp
\oplus W^*$, and there's a (canonical) split surjection $f^*V
\twoheadrightarrow W^\perp$ with $W^\perp$ locally free. Similarly we 
obtain a canonical surjection $q : f^*V \twoheadrightarrow (\ker p)^\perp$ 
split by a map $t$.

Given an isomorphism 

\[
\xymatrix{f^*V \ar@{>>}[r]_p \ar@{=}[d]& \ar@/_1.0pc/@{-->}[l]_sW \ar[d]^\psi\\
f^*V \ar@{>>}[r]_{p'} & \ar@/_1.0pc/@{-->}[l]_{s'}W'}
\]
we get an isomorphism of (split) diagrams
\[
\xymatrix{f^*V \ar[r]^{f^*\phi}\ar@{=}[d] &(f^*V)^*\ar[r]^{s^*} \ar@{=}[d] & W^* \ar[d]^{(\psi^{-1})^*}\\
f^*V \ar[r]^{f^*\phi} & (f^*V)^* \ar[r]_{(s')^* }& (W')^*}
\]
and hence an isomorphism of split surjections
\[
\xymatrix{f^*V \ar@{>>}[r]_q \ar@{=}[d]& \ar@/_1.0pc/@{-->}[l]_tW^\perp \ar[d]^\delta\\
f^*V \ar@{>>}[r]_{q'} & \ar@/_1.0pc/@{-->}[l]_{t'}(W')^\perp},
\]

so that $\eta_X$ is a well-defined map of sets. Given a map of schemes
$g : Y \rightarrow X$, such that $f\circ g = h$ and an element 
\[
\xymatrix{f^*V \ar@{>>}[r]_p & \ar@/_1.0pc/@{-->}[l]_sW}
\]

in $\RGr_V(X)$, 
\begin{align*}
\RGr(g) \circ \eta_X(\xymatrix{f^*V \ar@{>>}[r]_p &
  \ar@/_1.0pc/@{-->}[l]_sW}) = & \RGr(g)( \xymatrix{f^*V \ar@{>>}[r]_q
  & \ar@/_1.0pc/@{-->}[l]_t(\ker p)^\perp})\\
 = & \xymatrix{h^*V \ar[r]^{can} & g^*f^*V \ar@{>>}[r]_{g^*q} &
  \ar@/_1.0pc/@{-->}[l]_{g^*t} g^*((\ker(p))^\perp)}
\end{align*}

while
\begin{align*}
\eta_Y\circ\RGr(g)(\xymatrix{f^*V \ar@{>>}[r]_p &
  \ar@/_1.0pc/@{-->}[l]_sW})) = \xymatrix{h^*V \ar[r]^{can} & g^*f^*V \ar@{>>}[r]_{q'} &
  \ar@/_1.0pc/@{-->}[l]_{t'} (g^*(\ker(p)))^\perp}
\end{align*}

By Lemma \ref{lem:perp_pullback}, there's a canonical isomorphism
$g^*((\ker(p)^\perp)) \rightarrow (g^*(\ker(p)))^\perp$, and under
this isomorphism $q'$ and $t'$ correspond to $g^*q$, and $g^*t$,
respectively. This concludes the check of naturality.

Now by Lemma \ref{lem:action_presheaf}, there's a presheaf $\RGr :
\Sm{S,qp}^{C_2} \rightarrow \Set$. To determine its values on a
$C_2$-scheme $(X,\sigma)$, we note that a fixed point of the action of Lemma
\ref{lem:action_presheaf} is determined by an isomorphism of split
surjections

\[
\xymatrix{f^*V \ar@{>>}[r]_q \ar@{=}[d]& \ar@/_1.0pc/@{-->}[l]_t\sigma^*(\ker(p)^\perp) \ar[d]^\psi\\
f^*V \ar@{>>}[r]_{p} & \ar@/_1.0pc/@{-->}[l]_{s}\ker(p)}
\]

Note that because $\sigma$ is an involution, for any $\mc O_X$-module
$M$, there's a canonical isomorphism of $\mc O_X$-modules $\sigma_*M \cong
\sigma^*M$. Thus there's a natural isomorphism 
\[
\Hom_{mod-\mc
  O_X}(\sigma_*f^*V,-) \cong \Hom_{mod-\mc O_X}(\sigma^*f^*V,-) \cong
\Hom_{mod-\mc O_X}(f^*V,-).
\]

It follows that any Hermitian form
\[
\phi : f^*V \rightarrow \Hom_{mod-\mc O_X}(f^*V,\mc O_X)
\]
can be promoted to a Hermitian form
\[
\widetilde\phi : f^*V \rightarrow \Hom_{mod-\mc O_X}(\sigma_*f^*V,\mc O_X)
\]
compatible with an involution $\sigma$ on $X$.

Let $(M,\phi|_M)$ be a Hermitian sub-bundle of $f^*V$ over the scheme $X$ with
trivial involution. We claim that $\sigma^*(M^\perp)$ is the
orthogonal complement of $M$ viewed as a Hermitian sub-bundle of
$f^*V$ with the promoted form $\widetilde \phi$. Said differently, we
claim that
\[
\sigma^*(\ker(f^*V \xrightarrow{\phi|_M} \Hom(M,\mc O_X))) \cong \ker(f^*V
\xrightarrow{\widetilde \phi|_M} \Hom(\sigma_*M,\mc O_X)).
\]

But using the natural isomorphism between $\sigma^*$ and $\sigma_*$,
together with the natural isomorphisms 
\[
\sigma^*\Hom(M,\mc O_x) \cong
\Hom(M,\mc O_X)
\] and $\sigma^*f^*V \cong f^*V$,
this becomes a question
of whether $\sigma^*$ is left exact. In general it isn't, but because
$\sigma$ is an involution, $\sigma^*$ is naturally isomorphic to
$\sigma_*$ which is left exact. The claim follows.


\subsection{Representability of $\RGr$}\label{subsec:representability}

Fix a Hermitian vector bundle $(V,\phi)$ over $S$ where $\dim(V) = n$
and $S$ is a scheme with
trivial involution. Then the underlying scheme of $\RGr(V)$ is the
pullback 
\[
\xymatrix{\RGr(V) \ar[r]\ar[d] & \uline{\Hom}_{\mc O_S}(V,V) \times
  \uline{\Hom}_{\mc O_S}(V,V) \ar[d]^{\circ, id} \\ \uline{\Hom}_{\mc
    O_S}(V,V) \ar[r]^-\Delta & \uline{\Hom}_{\mc O_S}(V,V) \times \uline{\Hom}_{\mc O_S}(V,V)}
\]
where the right vertical map sends $p \mapsto (p \circ p, p)$. In
other words, the underlying scheme is the scheme of idempotent
endomorphisms of $V$. The action corresponds to the map $p \mapsto
p^\dagger$, where $p^\dagger$ is the adjoint of $p$ with respect to
the form $\phi$. 

Note that using this description, an equivariant map $(X,\sigma) \rightarrow
\RGr(V)$ corresponds to an idempotent $p : V_X \rightarrow V_X$ such
that $\phi^{-1}(\gamma^{-1}(\sigma^*p)\gamma)^*\phi = p$, where we're being cavalier
and using $\ast$ to denote both dual (on the outside) and pullback (by
$\sigma$). Here $\gamma$ is the canonical isomorphism $V_X
\xrightarrow{\gamma} \sigma^*V_X$; if the structure map of $X$ is $f :
X \rightarrow S$, then $\gamma$ arises from the equality $\sigma \circ f = f$.

Note that the form on $V_{(X,\sigma)}$ is by definition the composite

\[
\widetilde \phi : V_X \xrightarrow{\phi} V_X^* \xrightarrow{(\gamma^*)^{-1}} \sigma^*V_X^*
\xrightarrow{(\eta^*)^{-1}} \sigma_*V_X^*,
\]
and the adjoint of $p$ is given by $\widetilde \phi^{-1} (\sigma_*p)^*
\widetilde \phi$. Expanding, this is

\[
\phi^{-1}(\gamma^*)(\eta^*)(\eta^*)^{-1}(\sigma^*p)^*(\eta^*)(\eta^*)^{-1}(\gamma^*)^{-1}\phi
= \phi^{-1}(\gamma^{-1}(\sigma^*p)\gamma)^*\phi,
\]

and so we recover the condition that $p^\dagger  =  p$, which
corresponds to the fact that $V_X = \ker p \perp \im p$, and hence the
restriction of the form on $V_X$ to $\im p$ (and $\ker p$) is
non-degenerate. 

To summarize, the underlying scheme of $\RGr(V)$ represents
idempotents, and equivariant maps pick out those idempotents which
correspond to orthogonal projections. 

\begin{definition}

Now fix a dimension $d$ and a non-degenerate Hermitian vector bundle
$(V,\phi)$ over $S$.  Define $\RGr_d(V)$ to be the closed
subscheme of $\RGr(V)$ cut out by $\rk(p) = d$, where $\rk$ is the
rank map. In other words, $\RGr_d(V)$ is the pullback
\[
\xymatrix{\RGr_d(V) \ar[r]\ar[d] & \RGr(V) \ar[d]^{\rk} \\ \{d\}
  \ar[r] & \Z}
\]
The requirement that $V$ be non-degenerate is necessary so that the action
on $\RGr(V)$ sends rank $d$ subspaces to rank $d$ subspaces and hence
induces an action on $\RGr_d(V)$. 
\end{definition}

\begin{remark}
Denote by $g : \RGr_d(V) \rightarrow S$ the structure map of
$\RGr_d(V)$.  Because $\RGr_d(V)$ is representable by a $C_2$-scheme, there's an
idempotent $g^*(V) \rightarrow g^*(V)$ corresponding to the identity map $id: \RGr_d(V) \rightarrow
\RGr_d(V)$. This idempotent is simply the idempotent which, over a
point of $\RGr_d(V)$ represented by an idempotent $p : V \rightarrow
V$, restricts to $p$. There's an action $\sigma$ on $\RGr_d(V) \times_S V$
induced by the action on $\RGr_d(V)$, and using the fact that $\sigma
p \sigma = p^\dagger$ one can see that this idempotent is
non-degenerate with respect to the promoted Hermitian form on $g^*(V)$
compatible with the involution on $\RGr_d(V)$.
\end{remark}

\begin{remark}
Since we've shown that $\RGr(V)$ represents non-degenerate Hermitian
subbundles of $V$, at this point we'll move away from explicitly
referring to split surjections and just represent the sections of
$\RGr(V)$ by non-degenerate subbundles. 
\end{remark}

\begin{definition}
Let $\mbb H_S$ denote the hyperbolic space \ref{ex:hyp_space} over the base scheme $S$. 
Let $\mbb H^\infty = \colim_n \mbb H^n_S$. Similarly given a non-degenerate Hermitian 
vector bundle $V$, let $V \perp \mbb H^\infty = \colim_n V \perp \mbb H^n$. 
For $V \subset \mbb H^\infty$
a constant rank non-degenerate subbundle,
let $|V|$ denote the rank of $V$. Order such subbundles of $\mbb
H^\infty$ by inclusion, and denote the resulting poset by $P$. Given an inclusion $V \hookrightarrow V'$
of non-degenerate subbundles, denote by $V'-V$ the complement of $V$
in $V'$. Let $\mc H : P \rightarrow \mathrm{Fun}(\Sm{S,qp}^{C_2,
  op},\mathrm{Set})$ be the functor which on objects sends a subbundle $V$ to $
\RGr_{|V|} (V \perp \mbb H^\infty) = \colim_n \RGr_{|V|}(V \perp \mbb H^n)$. 
Given an inclusion $V \hookrightarrow V'$, the induced map
$\RGr_{|V|}(V \perp \mbb H^\infty) \rightarrow \RGr_{|V'|}(V' \perp
\mbb H^\infty)$ is given by $E \mapsto E \perp (V'-V)$. Note that
because $V$ is non-degenerate, $V \perp (V'-V) = V'$. Define
\begin{equation}\label{def:RGr_inf}
\RGr_\bullet = \colim \mc H.
\end{equation}

\end{definition}

\subsection{The \'Etale Classifying Space}

The content of this section is a straightforward generalization of the work of \cite{SchTri} to the $C_2$-equivariant setting. Fix a scheme $S$ with 2 invertible, and let $(V,\phi)$ be a
(possibly degenerate) Hermitian vector bundle over $S$. For a
$C_2$-scheme $f : X \rightarrow S$ over $S$, let
\[
\mc S(V,\phi)(X)
\]
be the category of non-degenerate Hermitian sub-bundles of $f^*V$. A
morphism in this category from $E_0$ to $E_1$ is an isometry not
necessarily compatible with the embeddings $E_0,E_1 \subseteq
f^*V$. Using pullbacks of quasi-coherent modules, we turn $\mc S$ into a presheaf of categories on
$\Sm{S,qp}^{C_2}$. For integer $d \geq 0$, define
\[
\mc S_d(V,\phi)  \subset \mc S(V,\phi)
\]
to be the presheaf which on a $C_2$-scheme $f : X \rightarrow S$
assigns the full subcategory of non-degenerate Hermitian sub-bundles
of $(f^*V,f^*\phi)$ which have constant rank $d$. The associated
presheaf of objects is $\RGr_d(V,\phi)$.

Note that the object $V = (V,0) \in \mc S_{|V|}(V \perp \mbb H^\infty)$ has
automorphism group $O(V)$. Thus we get an inclusion $O(V,\phi)
\rightarrow \mc S_{|V|}(V \perp \mbb H^\infty) $, where $O(V)$ is the
isometry group considered as a category on one object. After isovariant \'etale
sheafification, this inclusion becomes an equivalence; this follows
from Corollary \ref{cor:loc_det_rank} that on the points in the isovariant \'etale
topology, Hermitian vector bundles are determined by rank.

Upon applying the nerve, we get maps of simplicial presheaves $BO(V)
\rightarrow B\mc S_{|V|}(V\perp \mbb H^\infty)$ which is a weak
equivalence in the isovariant \'etale topology. Abusing notation, let $B_{isoEt}O(V)$
denote a fibrant replacement of $B\mc S_{|V|}(V\perp \mbb H^\infty)$ in the isovariant
\'etale topology so that we get a sequence of maps
\[
BO(V) \rightarrow B\mc S_{|V|}(V\perp \mbb H^\infty) \rightarrow B_{isoEt}O(V).
\]
which are weak equivalences in the isovariant \'etale topology.

\begin{lemma} \label{lem:weqaff}
Let $(V,\phi)$ be a non-degenerate Hermitian vector bundle over a
scheme $S$ with trivial involution and $\frac{1}{2} \in \Gamma(S,\mc
O_S)$. Then for any affine $C_2$-scheme $\Spec R$ over $S$, the map
\[
B\mc S_{|V|}(V\perp \mbb H^\infty)(R) \rightarrow B_{isoEt}O(V)(R)
\]
is a weak equivalence of simplicial sets. In particular, the map
\[
B\mc S_{|V|}(V\perp \mbb H^\infty) \rightarrow B_{isoEt}O(V)
\]
is a weak equivalence in the equivariant Nisnevich topology, and hence an
equivalence after $C_2$ motivic localization. 
\end{lemma}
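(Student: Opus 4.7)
The plan is to identify both sides as classifying groupoids on $\Spec R$ and show they agree, using Corollary \ref{cor:loc_det_rank} (local triviality of Hermitian forms in the isovariant \'etale topology) together with a stabilization argument that makes the ``embedding into $\mbb H^\infty$'' data essentially free.

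On the source side, $B\mc S_{|V|}(V \perp \mbb H^\infty)(R)$ is the nerve of the groupoid whose objects are non-degenerate Hermitian sub-bundles $E \subseteq (V \perp \mbb H^\infty)_R$ of rank $|V|$ and whose morphisms are arbitrary isometries. Because $2$ is invertible in $R$, for any non-degenerate Hermitian module $(M,\phi)$ the form $\phi \perp (-\phi)$ on $M \oplus M$ is hyperbolic of rank $2|M|$ (the diagonal is a Lagrangian), so any rank-$|V|$ non-degenerate Hermitian $R$-module $E$ embeds as an orthogonal summand of $V_R \perp \mbb H^{|V|} \subseteq (V \perp \mbb H^\infty)_R$. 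Hence up to homotopy $B\mc S_{|V|}(V \perp \mbb H^\infty)(R) \simeq \coprod_{[E]} BO(E)$, indexed by isometry classes of non-degenerate rank-$|V|$ Hermitian $R$-modules.

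On the target side, the fibrant replacement $B_{isoEt}O(V)(R)$ is, by standard site theory, the classifying space of isovariant \'etale $O(V)$-torsors on $\Spec R$. The natural comparison map sends $E$ to $\uline{\Iso}(V_R, E)$; this sheaf is an $O(V)$-torsor by Corollary \ref{cor:loc_det_rank}, and the assignment $P \mapsto P \times^{O(V)} V_R$ is inverse to it. Automorphisms match since $\mathrm{Aut}(E) = O(E)$ is canonically the automorphism group of the associated torsor, yielding inverse equivalences of groupoids.

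The principal technical content is Corollary \ref{cor:loc_det_rank}; the rest is bookkeeping to translate between Hermitian modules and isovariant \'etale torsors on affines. The ``In particular'' clause then follows from Remark 2.20 of \cite{GrpSchHell}: every separated $C_2$-scheme of finite type over $S$ is Nisnevich-locally affine, so a weak equivalence of simplicial presheaves on affines is automatically a Nisnevich-local equivalence. The motivic statement is immediate since $L_{\mathrm{mot}}$ factors through $L_{\mathrm{Nis}}$.
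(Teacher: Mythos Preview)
Your argument is correct and follows essentially the same route as the paper: identify $\mc S_{|V|}(V\perp\mbb H^\infty)(R)$ with the groupoid of all rank-$|V|$ Hermitian $R$-modules (using that every such module embeds in $\mbb H^\infty$ over an affine), then compare with the groupoid of isovariant \'etale $O(V)$-torsors via $E\mapsto\uline{\Iso}(V_R,E)$, invoking Corollary~\ref{cor:loc_det_rank}; the paper makes the torsor identification explicit by citing Jardine's theorem for $B\mc F\simeq B_{isoEt}O(V)$. One small slip: $(E,\phi)\perp(E,-\phi)$ is hyperbolic on $E$, i.e.\ $\mbb H(E)$, not $\mbb H^{|V|}$, so for non-free projective $E$ you only get an embedding into some $\mbb H^n\subset\mbb H^\infty$ with $n$ possibly larger than $|V|$---this does not affect the conclusion.
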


\begin{proof}

Each Hermitian vector bundle $W  \in S_{|V|}(V\perp \mbb H^\infty)(R)$
gives rise to an $ O(V)$-torsor via $W \mapsto Isom(V,W)$. Note
that this is an $O(V)$-torsor because locally in the isovariant
\'etale topology, $W \cong V$,
so that locally $Isom(V,W) \cong Isom(V,V) \cong 
O(V)$. Because Hermitian vector bundles are isovariant \'etale locally
determined by rank, the same proof as the ordinary vector bundle case shows
that the category of $\mc O(V)$ torsors is equivalent to the category
of Hermitian vector bundles. Because over an affine scheme, every
Hermitian vector bundle is a summand of a hyperbolic module, it
follows that $S_{|V|}(V\perp \mbb H^\infty)(R)$ is equivalent to the
category of isovariant \'etale $\mc O(V)$ torsors. 

Let $\mc F : \Sm{S,qp}^{C_2} \rightarrow Gpd$ be the sheaf which assigns
to $f : X \rightarrow S$ the groupoid of $ O(f^*V)$-torsors. The
construction $W \mapsto Isom(f^*V,W)$ described above defines a
functor $S_{|V|}(V\perp \mbb H^\infty) \rightarrow \mc F$ which is an
equivalence when evaluated at affine $C_2$-schemes. It follows that
there's a sequence
\[
B S_{|V|}(V\perp \mbb H^\infty) \rightarrow B\mc F \rightarrow B_{isoEt}O(V)
\]
where the first map is a weak equivalence of simplicial sets when
evaluated at affine $C_2$-schemes, and by \cite{Jar01} Theorem 6, the second map is a
weak equivalence of simplicial sets when evaluated at any
$C_2$-scheme. 
\end{proof}

\begin{definition}
Following \cite{SchTri}, let
\[
\mc S_\bullet = \colim_{V \subset \mbb H^\infty_S} \mc S_{|V|}(V \perp
\mbb H^\infty)
\]
where similarly to the definition of $\RGr$, for $V \subset V'$ the
functor 
\[
\mc S_{|V|}(V \perp
\mbb H^\infty) \rightarrow \mc S_{|V'|}(V' \perp
\mbb H^\infty)
\]
is defined on objects by $E \mapsto E \perp V'-V$ and on morphisms by
$f \mapsto f \perp 1_{V'-V}$. 
\end{definition}

\begin{definition}
Define the infinite orthogonal group
\[
O(\mbb H^\infty_S) = \colim_{W \subseteq \mbb H^\infty_S} O(W).
\]
where the colimit is over non-degenerate subbundles of $\mbb H^\infty$. 
If $V$ is a Hermitian vector bundle, define  
\[
  O(V\perp \mbb H^\infty_S) = \colim_{W \subseteq V \perp \mbb H^\infty_S} O(W)
\]
where $W$ is a non-degenerate subbundle of $V \perp \mbb H^\infty$. 
\end{definition}

\begin{definition}
Let $R$ be a commutative ring. Define $\Delta R$ to be the simplicial ring with involution $[n] \mapsto
R[x_0,\dots,x_n]/(\sum x_i -1)$, where the involution is inherited
from the involution on $R$.
\end{definition}

\begin{lemma}(\cite{SchTri})\label{lem:incHE}
Let $V$ be a non-degenerate Hermitian vector bundle over a commutative
ring with involution $(R,\sigma)$ such that $\frac{1}{2} \in R$. Then
the inclusion $\mbb H^\infty \subset V \perp \mbb H^\infty$ induces a
homotopy equivalence of simplicial groups
\[
O(\mbb H^\infty)(\Delta R) \rightarrow O(V \perp \mbb
H^\infty)(\Delta R) \qquad A \mapsto 1_V \perp A.
\]
\end{lemma}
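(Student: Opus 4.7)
The plan is to argue via an Eilenberg swindle, making crucial use of two facts: (1) over an affine scheme with $\tfrac{1}{2}$ invertible, every non-degenerate Hermitian vector bundle is a direct summand of a hyperbolic module (the fact invoked already in the proof of Lemma~\ref{lem:weqaff}), and (2) the simplicial ring $\Delta R$ has enough "affine parameters" to realize straight-line homotopies between orthogonal matrices that differ by an Eichler (elementary isometry) transformation.

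First, I would choose a non-degenerate Hermitian vector bundle $W$ over $R$ with $V \perp W \cong \mbb H^m$ for some $m$. This yields a stabilization map
\[
g \colon O(V \perp \mbb H^\infty) \longrightarrow O(V \perp W \perp \mbb H^\infty) \cong O(\mbb H^m \perp \mbb H^\infty) \cong O(\mbb H^\infty), \qquad B \mapsto B \perp 1_W,
\]
so that the composite $g \circ f \colon O(\mbb H^\infty) \to O(\mbb H^\infty)$ sends $A$ to $1_V \perp A \perp 1_W$, which under the isomorphism $V \perp \mbb H^\infty \perp W \cong \mbb H^\infty$ amounts to conjugating $1_{\mbb H^m} \perp A$ by a fixed permutation-type isometry. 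The analogous statement for $f \circ g$ is similar. It therefore suffices to show that on $\Delta R$-points the maps $A \mapsto 1_V \perp A$ and $A \mapsto 1_W \perp A$ in $O(V \perp W \perp \mbb H^\infty)(\Delta R) = O(\mbb H^\infty)(\Delta R)$ are simplicially homotopic to each other, and that block-swapping is simplicially homotopic to the identity.

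The key step, and the main obstacle, is constructing these simplicial homotopies of orthogonal matrices. The standard trick is the following Hermitian analogue of the Whitehead lemma for $GL$: given two copies of the same rank-$n$ non-degenerate Hermitian bundle $P$ sitting inside $\mbb H^\infty$, the isometry that swaps them can be written as a product of Eichler transformations $E_{u,v}$, where $E_{u,v}(x) = x + u\,\phi(v,x) - v\,\phi(u,x) - u\,\phi(v,v)\,\phi(u,x)$ for a hyperbolic pair $(u,v)$. Each such $E_{u,v}$ is linked to the identity by the straight-line path $t \mapsto E_{tu,v}$, which is a $1$-simplex in $O(-)(\Delta R)$ because its coefficients lie in $R[t] \subset \Delta R$ in the $1$-simplex $R[x_0,x_1]/(x_0+x_1 - 1)$. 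Composing these $1$-simplices yields the desired homotopies; extending to higher simplices is automatic because the expression for $E_{tu,v}$ is polynomial in $t$.

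Finally, assembling the three ingredients, one verifies that $g \circ f \simeq \mathrm{id}$ and $f \circ g \simeq \mathrm{id}$ as maps of simplicial groups (in fact, as pointed simplicial maps is enough to conclude the homotopy equivalence asserted). The hard part is bookkeeping the Eichler-transformation decomposition of the swap isometry so that it remains in the colimit $O(\mbb H^\infty)$, i.e.\ uses only finitely many hyperbolic coordinates at each stage; this is precisely where passing to the infinite stabilization $\mbb H^\infty$ (as opposed to any finite $\mbb H^n$) is essential, since the homotopy requires "room" given by at least one extra hyperbolic plane beyond those moved by the swap.
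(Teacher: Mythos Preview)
Your approach is correct in spirit and reaches the same conclusion, but the route differs from the paper's. The paper does not construct an explicit homotopy inverse via a complement $W$. Instead, it first treats the special case $V = \mbb H$: for each $n$, the map $j\colon O(\mbb H^n) \to O(\mbb H^{2n+2})$, $A \mapsto 1_{\mbb H} \perp A \perp 1_{\mbb H^{n+1}}$, is shown to be simplicially homotopic (through basepoint-preserving homotopies) to the standard inclusion $i\colon A \mapsto A \perp 1_{\mbb H^{n+2}}$ by conjugating with an even permutation matrix $g$, which---being a product of elementary isometries---is na\"ively $\mbb A^1$-homotopic to the identity. Since $i$ and $j$ then induce the same map on $\pi_* O(\mbb H^\infty)(\Delta R)$, and $i_*$ is the identity by cofinality of the defining colimit, the map $j_*$ is an isomorphism; this settles $V = \mbb H$ and, by induction, $V = \mbb H^n$. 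For general $V$ the paper embeds $V \hookrightarrow \mbb H^n$ and applies the $2$-out-of-$6$ property to the chain
\[
O(\mbb H^\infty) \to O(V \perp \mbb H^\infty) \to O(\mbb H^n \perp \mbb H^\infty) \to O(\mbb H^n \perp V \perp \mbb H^\infty),
\]
in which the two overlapping composites are already known to be equivalences.

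Your Eichler-transformation argument and the paper's elementary-matrix argument play the same technical role: both furnish an $\mbb A^1$-path from a block-swap isometry to the identity. The paper trades your explicit homotopy inverse for a cleaner categorical reduction (cofinality plus $2$-out-of-$6$), which sidesteps the need to pin down precisely which isometry identifies $V \perp \mbb H^\infty \perp W$ with $\mbb H^\infty$ and to verify that conjugation by it is homotopic to the identity. Your approach is more hands-on and yields slightly more (an actual simplicial homotopy inverse), at the cost of exactly the bookkeeping you flag at the end.
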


\begin{proof}
First, assume that $V = \mbb H$. Consider the map $j : O(\mbb H^n)
\rightarrow O(\mbb H^{2n+2})$ sending $A$ to $1_{\mbb H} \perp A \perp
1_{\mbb H^{n+1}}$. We claim that this is na\"ively $\mbb A^1$ homotopic
to the inclusion $i : O(\mbb H^n) \rightarrow O(\mbb H^{2n+2})$, $i(A)
= A \perp 1_{\mbb H^{n+2}}$ which defines the colimit $O(\mbb
H^\infty)$. Let $g = \begin{pmatrix}
0 & I_{2n} & 0 \\
I_2 & 0 & 0 \\
0 & 0 & I_{2n+2}
\end{pmatrix}$ where $I_n$ denotes an $n \times n$ identity
matrix. Then $i = gjg^{-1} = gjg^t$. Because $g$ corresponds to an
even permutation matrix, it can be written as a product of elementary
matrices, each of which is na\"ively $\mbb A^1$ homotopic to the
identity. It follow that $g$ is na\"ively $\mbb A^1$ homotopic to the
identity, and hence the induced maps $i,j : O(\mbb H^n)(\Delta R)
\rightarrow O(\mbb H^{2n+2})(\Delta R)$ are simplicially homotopic
via a base-point preserving homotopy. It follows that $i,j$ induce the
same map on homotopy groups, so that $j_* = i_*: \pi_kO(\mbb
H^\infty)(\Delta R) = \colim_n \pi_k O(\mbb H^n)(\Delta R)
\rightarrow \pi_k O(\mbb H^\infty)(\Delta R)$ is the colimit of a map
corresponding to a cofinal inclusion of diagrams, and hence is an
isomorphism on all simplicial homotopy groups. Because simplicial
groups are Kan complexes, it follows that $j$ is a homotopy
equivalence, and the claim is proved when $V = \mbb H$.

Now a trivial induction shows that the lemma holds when $V = \mbb
H^n$. In general, choose an embedding $V \subseteq \mbb H^n$, and
consider the sequence of maps
\[
O(\mbb H^\infty)(\Delta R) \rightarrow O(V \perp \mbb
H^\infty)(\Delta R) \rightarrow O(\mbb H^n \perp \mbb H^\infty)(\Delta R)
\]
\[
\rightarrow O(\mbb H^n \perp V \perp \mbb H^\infty)(\Delta R).
\]

The composites $O(\mbb H^\infty)(\Delta R) \rightarrow O(\mbb H^n
\perp \mbb H^\infty)$ and $O(V \perp \mbb
H^\infty)(\Delta R) \rightarrow O(\mbb H^n \perp V \perp \mbb
H^\infty)(\Delta R)$ are weak equivalences, so by 2 out of 6 the
first map is a weak equivalence. Because it is a map of simplicial
groups it is a homotopy equivalence. 
 
\end{proof}

For non-degenerate Hermitian vector bundles $(V,\phi_V), (W,\phi_W)$ and a commutative
$R$-algebra with involution $(A,\sigma)$, let
\[
\St(V,W)(A)
\]
be the set of $A$-linear isometric embeddings $f : V_A \rightarrow
W_A$. Given a map $A \rightarrow B$ of commutative $R$-algebras with involution, tensoring over $R$ with $B$ makes
$\St(V,W)(-)$ a presheaf on commutative $R$-algebras with involution. There's a
transitive left action of $O(V \perp \mbb H^\infty)$ on $\St(V,V \perp \mbb
H^\infty)$ given by $(f,g) \mapsto f \circ g$. Let $i_V$ denote the
isometric embedding $V \hookrightarrow V \perp \mbb H^\infty : v
\mapsto (v,0)$. The stabilizer of $i_V$ is the subgroup $O(\mbb
H^\infty) \subset O(V \perp \mbb H^\infty)$ where the inclusion map is
$A \mapsto 1_V \perp A$.

It follows that there's an isomorphism of presheaves of sets 
\[
O(\mbb H^\infty)\backslash O(V \perp \mbb  H^\infty) \cong
\St(V,V\perp \mbb H^\infty) \qquad f \mapsto f \circ i_V.
\]
Now Lemma \ref{lem:incHE} shows that the map $O(\mbb H^\infty)({\Delta
  R}) \rightarrow O(V \perp \mbb H^\infty)(\Delta R)$ is an
equivariant map which is a non-equivariant homotopy equivalence. The
simplicial group $O(\mbb H^\infty)(\Delta R)$ acts freely on both the
domain and codomain, so that the quotients $O(\mbb H^\infty)({\Delta
  R})\backslash O(V \perp \mbb H^\infty)(\Delta R)$ and $O(\mbb H^\infty)({\Delta
  R})\backslash O(\mbb H^\infty)(\Delta R)$ are homotopy equivalent.

Together with the isomorphism of simplicial sets
\[
O(\mbb H^\infty)({\Delta
  R})\backslash O(V \perp \mbb H^\infty)(\Delta R) \cong \St(V,V
\perp \mbb H^\infty)(\Delta R)
\]
it follows that $\St(V,V\perp \mbb H^\infty)(\Delta R)$ is
contractible for a commutative ring $(R,\sigma)$ with involution and
$\frac{1}{2} \in R$. Morever, this simplicial set is fibrant because
$G/H$ is fibrant for a simplicial group $G$ and subgroup $H$. We have
thus proved:

\begin{lemma}\label{lem:St_contrac}
Let $R$ be a commutative ring with $\frac{1}{2} \in R$. Then 
\[
\St(V,V\perp \mbb H^\infty)(\Delta R)
\]
is a contractible Kan set.
\end{lemma}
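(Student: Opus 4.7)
The plan is to assemble the pieces already developed in the paragraph preceding the lemma into a clean two-step argument, then verify the Kan condition separately.

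First, I would invoke the isomorphism of simplicial sets
\[
O(\mbb H^\infty)(\Delta R) \backslash O(V \perp \mbb H^\infty)(\Delta R) \xrightarrow{\cong} \St(V, V \perp \mbb H^\infty)(\Delta R), \qquad f \mapsto f \circ i_V,
\]
which comes from the transitive action of $O(V \perp \mbb H^\infty)$ on $\St(V, V \perp \mbb H^\infty)$ together with the identification of the stabilizer of $i_V$ with $O(\mbb H^\infty)$ via $A \mapsto 1_V \perp A$. Evaluating at the simplicial ring $\Delta R$ preserves the orbit/stabilizer description.

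Second, to show the quotient is weakly contractible, I would set $G = O(V \perp \mbb H^\infty)(\Delta R)$ and $H = O(\mbb H^\infty)(\Delta R)$. Left translation makes $H$ act freely on $G$, so the quotient map $G \to G/H$ is a principal $H$-bundle and in particular a Kan fibration with fiber $H$. This yields a fiber sequence
\[
H \longrightarrow G \longrightarrow G / H,
\]
and Lemma \ref{lem:incHE} tells us that the inclusion $H \hookrightarrow G$ is a homotopy equivalence of simplicial groups. The long exact sequence in homotopy groups then forces $\pi_n(G/H) = 0$ for all $n$, so $G/H$ is weakly contractible.

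For the Kan property, I would appeal to the classical fact that for any simplicial subgroup $H$ of a simplicial group $G$, the projection $G \to G/H$ is a principal $H$-bundle and hence a Kan fibration with Kan fiber, so $G/H$ is itself a Kan complex (see, e.g., May's \emph{Simplicial Objects in Algebraic Topology}). Combining the three steps gives that $\St(V, V \perp \mbb H^\infty)(\Delta R)$ is a contractible Kan set. The only mildly subtle point is checking that the orbit/stabilizer identification is already an isomorphism of simplicial sets before passing to homotopy (no sheafification needed), but this holds because the action on sections over $\Delta R$ is literally transitive with stabilizer $H$; this has in effect been recorded in the discussion preceding the lemma.
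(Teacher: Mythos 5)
Your proposal is correct and matches the paper's argument in all essentials: both use the orbit--stabilizer isomorphism $\St(V,V\perp\mbb H^\infty)(\Delta R)\cong O(V\perp\mbb H^\infty)(\Delta R)/O(\mbb H^\infty)(\Delta R)$, feed Lemma \ref{lem:incHE} into a free-action argument to get contractibility, and invoke the standard fibrancy of coset spaces of simplicial groups. The only cosmetic difference is that the paper compares the quotient $H\backslash G$ directly to $H\backslash H \simeq \ast$ via the observation that an $H$-equivariant weak equivalence of free $H$-spaces descends to the quotients, whereas you unpack this into the principal-bundle fiber sequence $H\to G\to G/H$ and run the long exact sequence; these are two phrasings of the same step.
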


Now we move to identifying $\RGr_V$ as a quotient of a contractible
space by a free group action. Let $V$ be a non-degenerate Hermitian
vector bundle over a ring $R$ with involution. Then the group $O(V)$
acts on the right on $\St(V,U)$ by precomposition. The map $\St(V,U)
\rightarrow \RGr_V(U) : f \mapsto \im(f)$ factors through the quotient
$\St(V,U)/O(V)$. The map is clearly surjective, and hence furnishes an
isomorphism of sets
\[
\St(V,U)/O(V) \cong \RGr_V(U) \qquad f \mapsto \im(f).
\]
In particular, there's an isomorphism of presheaves of sets 
\begin{equation}\label{eq:St_RGr_inf}
  \St(V,V \perp \mbb
  H^\infty)/O(V) \cong \RGr_V(V \perp \mbb H^\infty) 
\end{equation}

Now, let $V$ be a non-degenerate Hermitian vector bundle over a ring with
involution $R$ and let $U$ be a possibly degenerate Hermitian form
over $R$. Define $\mc E_V(U)$ to be the category whose objects are
$R$-linear maps $V \rightarrow U$ of Hermitian forms (aka isometric embeddings), and whose
morphisms from two objects $a : V \rightarrow U$ and $b : V
\rightarrow U$ are maps $c : \im(a) \rightarrow \im(b)$ making the
diagram
\[
\xymatrix{V \ar[r]^a\ar[dr]_b & \im(a) \ar[d]^c\\ & \im(b)}
\]
commute.

There's a natural right action of $O(V)$ on $\mc E_V(U)$ which on
objects sends
\[
\mc E_V(U) \times O(V) \rightarrow \mc E_V(U) : (a,g) \mapsto ag
\]
and which on morphisms is the trivial action.
Then clearly there's an isomorphism
\[
\mc E_V(U)/O(V) \cong \mc S_V(U) \qquad a \mapsto \im(a).
\] 

\begin{lemma}
The category $\mc E_V(V \perp \mbb H^\infty)$ is contractible.
\end{lemma}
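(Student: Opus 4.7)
The plan is to show that for every commutative $R$-algebra $A$ with involution, the category $\mc E_V(V \perp \mbb H^\infty)(A)$ is an indiscrete (non-empty) groupoid, i.e.\ one with a unique morphism between every pair of objects. Since the nerve of such a category is equivalent to that of the terminal category, this will give sectionwise contractibility of $B\mc E_V(V \perp \mbb H^\infty)$.

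First, I would note that $\mc E_V(V \perp \mbb H^\infty)(A)$ is non-empty: the canonical isometric embedding $V_A \hookrightarrow V_A \perp \mbb H^\infty_A$ is an object. Next, given two objects $a, b : V_A \to V_A \perp \mbb H^\infty_A$, I would use the fact that an isometric embedding of a non-degenerate Hermitian module is injective. Thus $a$ and $b$ each factor as an isomorphism of $A$-modules $\bar a : V_A \xrightarrow{\sim} \im(a)$, $\bar b : V_A \xrightarrow{\sim} \im(b)$ followed by the canonical inclusions into $V_A \perp \mbb H^\infty_A$.

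The condition $c \circ a = b$ for a morphism $c : \im(a) \to \im(b)$ unpacks, after composing with the inclusion $\im(b) \hookrightarrow V_A \perp \mbb H^\infty_A$, to $c \circ \bar a = \bar b$. Since $\bar a$ is an isomorphism, this forces $c = \bar b \circ \bar a^{-1}$, giving at most one morphism. Conversely, defining $c := \bar b \circ \bar a^{-1}$ produces a (manifestly invertible) morphism in $\mc E_V(V \perp \mbb H^\infty)(A)$, establishing existence. Hence every hom-set is a singleton.

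A non-empty category in which every hom-set is a singleton is canonically equivalent to the terminal category, so its nerve is contractible; naturality in $A$ then gives contractibility of the simplicial presheaf $B\mc E_V(V \perp \mbb H^\infty)$. There is no genuine obstacle: the only point that requires care is the identification $c \circ a = b \Leftrightarrow c \circ \bar a = \bar b$, which rests on the injectivity of $a$ and the corestriction being an isomorphism onto the image.
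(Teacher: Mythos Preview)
Your proof is correct and is essentially the same as the paper's, which reads in full: ``The category is nonempty and every object is initial.'' You have simply unpacked what ``every object is initial'' means here by exhibiting the unique morphism $c = \bar b \circ \bar a^{-1}$ between any two objects, whereas the paper leaves this verification to the reader.
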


\begin{proof}
The category is nonempty and every object is initial.
\end{proof}

The map of simplicial sets
\[
\St(V,V\perp \mbb H^\infty)(\Delta R) \rightarrow \mc E_V(V \perp \mbb
H^\infty)(\Delta R)
\]
is $O(V)(\Delta R)$ equivariant and a weak equivalence after
forgetting the action. Furthermore, $O(V)(\Delta R)$ acts freely on
both sides, so that the induced map on quotients 
\begin{equation}\label{eq:RGr_Sv}
\RGr_V(V \perp \mbb
H^\infty)({\Delta R}) \rightarrow \mc S_V(V \perp \mbb
H^\infty)(\Delta R)
\end{equation}
 is also a weak equivalence. As an aside, the inclusion $BO(V) \subset B\mc S_V(V \perp \mbb
H^\infty)$ is a weak equivalence since $\mc S_V(V \perp \mbb
H^\infty)$ is a connected groupoid. 

We now show that there's a motivic equivalence $\RGr_\bullet
\rightarrow \colim_n B_{isoEt}O(\mbb H^n)$ over possibly non-regular Noetherian base rings. 

Let  $X \rightarrow S$ be an affine
$C_2$-scheme over $S$, and let $W$ be a non-degenerate Hermitian
vector bundle over $X$. Given an isovariant \'etale $O(W)$
torsor $\pi : T \rightarrow X$, and an isovariant \'etale torsor $U$, let $U_\pi$ denote
the twisted sheaf $(U \times T)/O(W)$. 

Our goal is to appy Lemma 2.1 from \cite{cdhdesc}, which we restate below:

\begin{lemma}(Hoyois)
Let $\Gamma$ be an isovariant \'etale sheaf of groups on $\Sm{S,qp}^{C_2}$
acting on an isovariant \'etale sheaf $U$. Suppose that, for every $X
\in \Sm{S,qp}^{C_2}$ and every isovariant \'etale torsor $\pi: T
\rightarrow X$ under $\Gamma$, $U_\pi \rightarrow X$ is a motivic
equivalence on $\Sm{S,qp}^{C_2}$. Then the map
\[
L_{isoEt}(U/\Gamma) \rightarrow B_{isoEt}\Gamma
\]
induced by $U \rightarrow \ast$ is a motivic equivalence on $\Sm{S,qp}^{C_2}$.
\end{lemma}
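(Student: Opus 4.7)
The plan is to exhibit the map $L_{isoEt}(U/\Gamma) \to B_{isoEt}\Gamma$ as a ``universal bundle with fiber $U$'' and then invoke universality of colimits in the $\infty$-topos of isovariant \'etale sheaves together with the fact that $L_{\mathrm{mot}}$ is a left adjoint. The key structural observation is that, in the $\infty$-topos of isovariant \'etale sheaves on $\Sm{S,qp}^{C_2}$, for every $X \in \Sm{S,qp}^{C_2}$ and every map $[\pi]: X \to B_{isoEt}\Gamma$ classifying an isovariant \'etale torsor $\pi : T \to X$, there is a homotopy cartesian square
\[
\xymatrix{U_\pi \ar[r] \ar[d] & L_{isoEt}(U/\Gamma) \ar[d] \\ X \ar[r]^-{[\pi]} & B_{isoEt}\Gamma.}
\]
This expresses $L_{isoEt}(U/\Gamma)$ as the Borel construction $(U \times E\Gamma)/\Gamma$ in the sheafy sense, and follows from the standard bar presentations $B_{isoEt}\Gamma \simeq L_{isoEt}|N_\bullet \Gamma|$ and $L_{isoEt}(U/\Gamma) \simeq L_{isoEt}|B_\bullet(\ast,\Gamma,U)|$ together with the fact that $\Gamma$-torsors are precisely classifying-map data into $B_{isoEt}\Gamma$.

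With this cartesian square in hand, the hypothesis reads: every base change of the candidate map $f: L_{isoEt}(U/\Gamma) \to B_{isoEt}\Gamma$ along a map from a representable $X \in \Sm{S,qp}^{C_2}$ is a motivic equivalence. Next I would write $B_{isoEt}\Gamma$ as a colimit of representables in the $\infty$-topos of isovariant \'etale sheaves: specifically, the Yoneda presentation
\[
B_{isoEt}\Gamma \;\simeq\; \colim_{(X,[\pi]) \in (\Sm{S,qp}^{C_2})_{/B_{isoEt}\Gamma}} X.
\]
Universality of colimits in an $\infty$-topos then yields a corresponding presentation of the total space,
\[
L_{isoEt}(U/\Gamma) \;\simeq\; \colim_{(X,[\pi])} U_\pi,
\]
and the map $f$ is identified with the colimit of the $U_\pi \to X$. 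Since each $U_\pi \to X$ is a motivic equivalence by assumption and $L_{\mathrm{mot}}$ preserves colimits (being a left adjoint, namely left Bousfield localization), applying $L_{\mathrm{mot}}$ produces an equivalence.

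The main obstacle will be justifying the two structural claims used above. First, identifying the fiber of $f$ over $[\pi]$ with $U_\pi$ requires verifying that the sheafification $L_{isoEt}$ of the naive set-theoretic quotient $U/\Gamma$ really computes the homotopy quotient, which uses both isovariant \'etale local triviality of torsors (so that the presheaf quotient becomes the correct stack after sheafification) and a compatibility between sheafification and finite limits of simplicial presheaves. Second, the appeal to universality of colimits rests on descent for $L_{isoEt}$; this is standard for an $\infty$-topos but one must check that the descent applies to our site $\Sm{S,qp}^{C_2}$ equipped with the isovariant \'etale topology (which is handled by \cite{GrpSchHell}). Once these two facts are in place, the proof is essentially formal: the hypothesis is precisely the fiberwise criterion, and universality of colimits plus cocontinuity of $L_{\mathrm{mot}}$ closes the argument.
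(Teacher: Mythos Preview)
The paper does not supply its own proof of this lemma; it is quoted verbatim as Lemma~2.1 of \cite{cdhdesc} and applied as a black box. Your proposal is correct and is exactly the standard argument (going back to Morel--Voevodsky and reproduced by Hoyois): decompose $B_{isoEt}\Gamma$ as a colimit of representables, identify the pullback of $L_{isoEt}(U/\Gamma)\to B_{isoEt}\Gamma$ along each $X\to B_{isoEt}\Gamma$ with the twisted form $U_\pi$, and conclude by universality of colimits and cocontinuity of $L_{\mathrm{mot}}$.

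One small technical correction worth making explicit: the colimit decomposition you invoke should be taken in the $\infty$-category of \emph{presheaves}, not in the isovariant \'etale topos. The point is that $L_{\mathrm{mot}}$ is a left adjoint on presheaves and therefore preserves presheaf colimits, but there is no reason it should preserve isovariant-\'etale-sheaf colimits (the isovariant \'etale topology is finer than the Nisnevich topology, so isovariant \'etale sheafification is not absorbed by $L_{\mathrm{mot}}$). Fortunately this causes no trouble: the co-Yoneda presentation $F\simeq\colim_{X\to F}X$ holds for any presheaf $F$ as a \emph{presheaf} colimit, and the fiber identification $L_{isoEt}(U/\Gamma)\times_{B_{isoEt}\Gamma}X\simeq U_\pi$ is unaffected because the inclusion of sheaves into presheaves preserves finite limits. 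With that adjustment your argument is complete and matches what Hoyois does.
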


Given an $O(W)$-torsor $\pi : T \rightarrow X$, we want to check that $\St(W,\mbb H^\infty)_\pi  =(\St(W,\mbb H^\infty) \times T)/O(W) \rightarrow
X$ is a motivic equivalence on $\Sm{S,qp}^{C_2}$. Leting $V = W_\pi$,
this is equivalent to checking
that $\St(V,\mbb H^\infty_X)$ is motivically contractible
over $\Sm{X}^{C_2}$. To wit, because $X$ is affine there's an
embedding $V \hookrightarrow \mbb H^m$, and we have $V \perp \mbb
H^\infty_X \cong \mbb H^\infty_X$ since $\mbb H^\infty_X = \colim_{W
  \subset \mbb H^\infty_X} W$. It follows that $\St(V,\mbb H^\infty_X)
\cong \St(V,V \perp \mbb H^\infty_X)$, and Lemma
\ref{lem:St_contrac} (which didn't assume regularity of the base) shows
that $\St(V,V \perp \mbb H^\infty_X)$ is motivically contractible over
$\Sm{X}^{C_2}$.

It's a direct consequence of the above lemma that 
\[
L_{isoEt}(\St(W,W\perp \mbb H^\infty)/O(W))
\rightarrow B_{isoEt}O(W)
\]
is a motivic equivalence. However, we've already shown \eqref{eq:St_RGr_inf} that
\[
\St(W,W\perp \mbb H^\infty)/O(W) \cong \RGr_W(W \perp \mbb H^\infty),
\]
so that 
\[
L_{isoEt}\St(W,W\perp \mbb H^\infty)/O(W) \cong L_{isoEt}\RGr_W(W \perp
\mbb H^\infty) \cong \RGr_{|W|}(W \perp \mbb H^\infty)
\]
which after taking colimits gives the desired result.

\begin{theorem}\label{thm:RGr_GW_nonreg}
Let $S$ be a Noetherian scheme of finite Krull dimension with
$\frac{1}{2} \in S$. There are
equivalences of motivic spaces on $\Sm{S,qp}^{C_2}$
\[
\Z \times \RGr_\bullet \xrightarrow{\sim} \Z \times \colim_n
B_{isoEt}O(\mbb H^n)
\]
\end{theorem}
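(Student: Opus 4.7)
The plan is to assemble the ingredients already developed in this section and then apply the Hoyois Lemma stated just above the theorem. For each non-degenerate Hermitian sub-bundle $W \subset \mbb H^\infty_S$ appearing in the poset $P$, I would show that the canonical map
\[
L_{isoEt}\bigl(\St(W, W \perp \mbb H^\infty)/O(W)\bigr) \longrightarrow B_{isoEt}O(W)
\]
is a motivic equivalence on $\Sm{S,qp}^{C_2}$. By the presheaf isomorphism \eqref{eq:St_RGr_inf}, the left-hand side sheafifies to $\RGr_{|W|}(W \perp \mbb H^\infty)$, which is exactly the value of the functor $\mc H$ defining $\RGr_\bullet$.

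To invoke the Hoyois Lemma with $\Gamma = O(W)$ acting on $U = \St(W, W \perp \mbb H^\infty)$, I would verify the torsor hypothesis: for every $X \in \Sm{S,qp}^{C_2}$ and every isovariant \'etale $O(W)$-torsor $\pi \colon T \to X$, the twisted sheaf $U_\pi = (\St(W, W \perp \mbb H^\infty) \times T)/O(W) \to X$ is a motivic equivalence. Setting $V = W_\pi$, the twisted Hermitian bundle over $X$, and using that locally on affine opens of $X$ there is an embedding $V \hookrightarrow \mbb H^m$ so that $V \perp \mbb H^\infty_X \cong \mbb H^\infty_X$, this reduces to showing that $\St(V, V \perp \mbb H^\infty_X) \to X$ is motivically contractible over $\Sm{X}^{C_2}$. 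For this I would use that any smooth $C_2$-scheme over $X$ is Nisnevich-locally affine, so motivic contractibility can be checked on sections over $\Delta R$ for affine $\Spec R \to X$, where it is precisely the content of Lemma \ref{lem:St_contrac}.

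Given these pieces, the main equivalence follows by passing to the colimit of $\mc H$ over the poset $P$ of non-degenerate sub-bundles. By Lemma \ref{lem:incHE} the transition maps $B_{isoEt}O(W) \to B_{isoEt}O(W')$ for $W' - W$ hyperbolic induce an equivalence on $\Delta R$ sections and hence a motivic equivalence; consequently the colimit along $P$ on the right collapses to the cofinal subsequence $\colim_n B_{isoEt}O(\mbb H^n)$. Crossing both sides with the constant sheaf $\Z$ then yields the stated equivalence.

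The main obstacle is the non-regularity of $S$: in \cite{SchTri} the corresponding step uses regularity to compare $\Delta R$ sections with the Grothendieck-Witt space, but here I only need motivic contractibility of $\St(V, V\perp \mbb H^\infty)$. The care needed is to ensure the affine-level contractibility of Lemma \ref{lem:St_contrac} upgrades to motivic contractibility on all of $\Sm{X}^{C_2}$; this rests on the equivariant Nisnevich cd-structure being generated by affines together with the fact that the $\Delta R$-construction computes the $\mbb A^1$-local sections in the presence of the involution. Everything else is a formal combination of the Hoyois Lemma, the identification \eqref{eq:St_RGr_inf}, and the colimit stability of motivic equivalences.
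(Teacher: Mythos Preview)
Your proposal is correct and follows essentially the same approach as the paper: verify the torsor hypothesis of the Hoyois Lemma for $\Gamma = O(W)$ acting on $\St(W, W\perp \mbb H^\infty)$ by reducing to Lemma~\ref{lem:St_contrac}, identify the quotient with $\RGr_{|W|}(W\perp\mbb H^\infty)$ via \eqref{eq:St_RGr_inf}, and pass to the colimit over $P$. One small correction: the reduction of the $P$-indexed colimit on the right to $\colim_n B_{isoEt}O(\mbb H^n)$ is straight cofinality (every non-degenerate $W\subset\mbb H^\infty$ sits inside some $\mbb H^n$), not Lemma~\ref{lem:incHE}; that lemma is about the infinite orthogonal groups and is used earlier to obtain contractibility of the Stiefel space, not to compare the finite-level classifying spaces.
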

\section{Periodicity in the Hermitian $K$-Theory of Rings with Involution}\label{chap:Einf}

\subsection{A Projective Bundle Formula for $\mbb P^\sigma$}

Let $\mbb P^\sigma$ denote $\mbb P^1$ with involution $\sigma$ defined
by $[x:y] \mapsto
[y:x]$. When necessary we'll point it at the point $[1:1]$. Throughout this section, we'll fix the notation $\mc O = \mc
O_{\mbb P^1}$.

Consider the square of $\mc O$-modules
\begin{equation}\label{diag:prebott}
\xymatrix{\mc O(-1)\ar[r]^{\frac{T+S}{2}}\ar[d]^{\frac{T-S}{2}} & \mc O \ar[d]^{\frac{T-S}{2}} \\ \iHom(\sigma_*\mc
  O,\mc O) \ar[r]^{\frac{T+S}{2}} & \iHom(\sigma_* \mc O(-1),\mc O)}
\end{equation}

where the map $\frac{T-S}{2} : \mc O(-1) \rightarrow \mc
O$ is induced via the tensor-hom adjunction by the composition 
\begin{equation}\label{eq:TminSone}
  \begin{split}
\mc O(-1) \otimes \left\{\frac{T-S}{2}\right\} \otimes \sigma_* \mc O
&\xrightarrow{id \otimes i \otimes id}
\mc O(-1) \otimes \mc O(1) \otimes \sigma_*\mc O  
\\ &\xrightarrow{id \otimes id \otimes (\sigma^\#)^{-1}} \mc O(-1) \otimes
\mc O(1) \otimes \mc O  \xrightarrow{\mu \otimes id} \mc O \otimes \mc
O \xrightarrow{\mu} \mc O
  \end{split}
\end{equation}

and the map  $\frac{T-S}{2} : \mc O \rightarrow \sigma_*\mc
O(-1)$ is induced via the tensor-hom adjunction by the composition 
\begin{equation}\label{eq:TminS}
  \begin{split}
\mc O \otimes \left\{\frac{T-S}{2}\right\} \otimes \sigma_* \mc O(-1)
&\xrightarrow{\sigma^\# \otimes  \sigma^\# \circ i \otimes id}
\sigma_*\mc O \otimes \sigma_*\mc O(1) \otimes \sigma_*\mc O(-1)  
\\ &\xrightarrow{ id \otimes \sigma_*(\mu)} \sigma_*\mc O \otimes \sigma_*\mc
O \xrightarrow{\sigma_*\mu} \sigma_*\mc O
\xrightarrow{(\sigma^\#)^{-1}} \mc O
  \end{split}
\end{equation}
where $\mu$ denotes multiplication. We're abusing notation
in the map \eqref{eq:TminS} and using $\sigma^\#$ to denote both the maps $\mc O \rightarrow
\sigma_* \mc O$ and $\mc O(1) \rightarrow
\sigma_* \mc O(1)$ induced by the graded automorphism of $k[S,T]$
given by $f(S,T) \mapsto f(T,S)$. The image of $1 \in \mc O$ under the
adjoint of the map \eqref{eq:TminS}
 yields the element $\frac{S-T}{2}$ as a global section of
$\sigma_*\mc O(1)$. The map $\frac{T+S}{2} :
\iHom(\sigma_* \mc O, \mc O) \rightarrow \iHom(\sigma_* \mc O(-1),\mc
O)$ is induced by precomposition with $\sigma_*(\frac{T+S}{2})$, which is just
multiplication by the global section $\frac{T+S}{2}$. Equation
\eqref{eq:TminSone} can be understood similarly.

We
claim that diagram \ref{diag:prebott} commutes. Fix an open $U
\subseteq \mbb P^\sigma$ which need not be invariant, and open $V
\subseteq U$. Going down then right yields the
composite map
\[
u \mapsto (v \mapsto \frac{T-S}{2} \cdot u \cdot
(\sigma^\#)^{-1}\left(\frac{T+S}{2}\cdot v\right)).
\]

Going right first then down yields the composite
\[
u \mapsto (v \mapsto (\sigma^\#)^{-1}(\sigma^\#(\frac{T+S}{2} \cdot u
)\cdot \frac{S-T}{2}\cdot v))
\]
These are equal since $\frac{T+S}{2}$ is an invariant global
section. Note that the diagram \ref{diag:prebott} is a map in\\
$\Fun([1],\Vect(\mbb P^\sigma))$ from 
\[
\mc O(-1) \xrightarrow{\frac{T+S}{2}} \mc O
\]
to its dual,
\[
\iHom(\sigma_*\mc
  O,\mc O) \xrightarrow{\frac{T+S}{2}}  \iHom(\sigma_* \mc O(-1),\mc O).
\]
Thus this diagram defines a (not necessarily non-degenerate) form,
which we denote by $\phi$. 

In order to show that this $\phi$ is symplectic, we have to check that
$\phi^* \circ (-\can) = \phi$. To spell this out in detail, the dual
and double dual are functors. Applying these two functors, we get the
two objects
\[
\xymatrix{O^* \ar[r]^{\frac{T+S}{2}^\ast} & O(-1)^*}
\]
and
\[
\xymatrix{O(-1)^{\ast\ast} \ar[r]^{\frac{T+S}{2}^{\ast\ast}} & O^{\ast\ast}}
\]
in  $\Fun([1],\Ch^b\Vect(\mbb P^\sigma))$.

Because $\can$ is a natural transformation $id \rightarrow \ast\ast$,
there's a commutative diagram
\[
\xymatrix{O(-1) \ar[d]^{\can} \ar[r]^{\frac{T+S}{2}} & O \ar[d]^{\can}
  \\ O(-1)^{\ast\ast} \ar[r]^{\frac{T+S}{2}^{\ast\ast}}
  \ar[d]^{\frac{T-S}{2}^\ast} & \ar[d]^{\frac{T-S}{2}^\ast}  O^{\ast\ast}
\\O^{\ast} \ar[r]^{\frac{T+S}{2}^{\ast}} & O(-1)^{\ast}
}
\]

The goal is to show that the vertical maps in the large rectangle are
the negative of the vertical maps in diagram \ref{diag:prebott}.
Tracing
through the definitions, we see that $\can$ is the map which sends $u
\in \mc O(-1)(U)$ to the natural transformation
\[
\gamma \mapsto (\sigma^\#)^{-1}(\gamma(u|_V)),
\]
and $\phi^* \circ \can (u)$ is the natural transformation
\[
v \mapsto (\sigma^\#)^{-1}\left(\frac{T-S}{2} \cdot v \cdot (\sigma^\#)^{-1}(u)\right)
\]
which is the same thing as
\[
v \mapsto \left(-\frac{T-S}{2} \cdot (\sigma^\#)^{-1} (v) \cdot u\right).
\]

On the other hand, $\frac{T-S}{2} :
\mc O(-1) \rightarrow \mc O^*$ is the map
\[
u \mapsto (v \mapsto \frac{T-S}{2} \cdot u \cdot (\sigma^\#)^{-1}(v))
\]
which is by what we calculated above equal to $-(\phi^* \circ \can) =
\phi^* \circ (-\can)$.

Now just as in \cite{Schder}, taking the mapping cone of $\phi$ via
the functor

\[
\Cone : \Fun([1],\Ch^b\Vect(\mbb P^\sigma))^{[0]} \rightarrow
\left(\Ch^b\Vect(\mathbb P^\sigma)\right)^{[1]}
\]
yields a symplectic form $\beta^\sigma = \Cone(\phi)$. 

We claim that there's an exact sequence
\[
\mc O(-1) \xrightarrow{\begin{pmatrix}\frac{T+S}{2} \\ \frac{T-S}{2} \end{pmatrix}} \mc O \oplus \mc O^* \xrightarrow{\begin{pmatrix}\frac{T+S}{2} & -\frac{T-S}{2} \end{pmatrix}} \mc O(-1)^*
\]
where the maps are the maps in diagram \ref{diag:prebott}. The fact
that the composite is zero follows from commutativity of that
\ref{diag:prebott}. To show that the kernel equals the image, note
that any permutation of $(\frac{T+S}{2},\frac{S-T}{2})$ is a regular sequence on
$k[S,T]$. Thus if  $\frac{T+S}{2}x + \frac{S-T}{2}y = 0$, reducing mod
$\frac{T+S}{2}$ we see that $y \in (\frac{T+S}{2})$ and reducing mod
$\frac{S-T}{2}$ we see that $x \in (\frac{S-T}{2})$. It follows that the square defining $\phi$ is a pushout, and hence the induced map on
mapping cones is a quasi isomorphism. Hence $\beta^\sigma$
is a well-defined, non-degenerate symplectic form in
$\left(\Ch^b\Vect(\mathbb P^\sigma)\right)^{[1]}$.

\begin{theorem} \label{thm:proj_inv}
Let $X$ be a scheme with involution, an ample family of line
bundles, and $\frac{1}{2} \in X$, and denote by $p : \mbb P^\sigma
\rightarrow X$ the structure map of the equivariant projective line
over $X$, with action $[x:y] \mapsto [y:x]$. Then for all $n \in \mbb
Z$, the following are natural stable equivalences of (bi-) spectra
\begin{align*}
GW^{[n]}(X) \oplus & GW^{[n-1]}(X,-\can) \xrightarrow{\sim}
GW^{[n]}(\mbb P^\sigma_X)\\
 \mbb GW^{[n]}(X) \oplus & \mbb GW^{[n-1]}(X,-\can) \xrightarrow{\sim}
\mbb GW^{[n]}(\mbb P^\sigma_X)\\
&(x,y) \mapsto p^*(x) + \beta^\sigma \cup p^*(y).
\end{align*}
\end{theorem}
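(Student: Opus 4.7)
The plan is to adapt Schlichting's proof of the projective bundle formula for $\mbb P^1$ from \cite{Schder} to the $\sigma$-twisted setting. Since the underlying scheme of $\mbb P^\sigma_X$ is just $\mbb P^1_X$, the dg category $\sPerf(\mbb P^\sigma_X)$ has the same underlying category as $\sPerf(\mbb P^1_X)$; only the duality functor is modified to $E \mapsto \iHom(\sigma_*E,\mc O[n])$. The classical Beilinson exceptional pair $(\mc O(-1),\mc O)$ still generates, and the task is to reorganize the resulting filtration so that its pieces inherit compatible duality structures.

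First, I would construct a sequence of dg categories with weak equivalences and duality
\[
\mc A \longrightarrow \sPerf(\mbb P^\sigma_X) \longrightarrow \mc C
\]
where $\mc A$ is the thick subcategory generated by $p^*\sPerf(X)$, with duality inherited from $\ast^n$. Because the involution on $X$ is trivial and $p^*$ is strictly compatible with the duality functors, $\mc A$ is equivalent (as a dg category with duality) to $\sPerf(X)$ with shifted duality, so $GW(\mc A) \simeq GW^{[n]}(X)$. The inclusion realizes the pullback map $p^* : GW^{[n]}(X) \to GW^{[n]}(\mbb P^\sigma_X)$, and likewise for $\mbb GW$.

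Second, I would identify the Verdier quotient $\mc C$ with $\sPerf(X,-\can)$ equipped with a shift, so that $GW(\mc C) \simeq GW^{[n-1]}(X,-\can)$. The symplectic form $\beta^\sigma$ constructed in the excerpt as $\Cone(\phi)$ is the key input: the computation already performed verifying $\phi^* \circ (-\can) = \phi$, together with the standard fact that the mapping cone functor converts a form of type $\ast^{[i]}$ with duality datum $\can$ into a form of type $\ast^{[i+1]}$ with duality datum $-\can$, accounts for both the shift by $-1$ and the sign twist $-\can$ on the second summand. The exact sequence $\mc O(-1) \to \mc O \oplus \mc O^* \to \mc O(-1)^*$ displayed in the excerpt shows that $\beta^\sigma$ is non-degenerate, and hence represents a well-defined class in $GW^{[1]}(\mbb P^\sigma_X)$, so that cup product with it yields a morphism $GW^{[n-1]}(X,-\can) \to GW^{[n]}(\mbb P^\sigma_X)$ via $y \mapsto \beta^\sigma \cup p^*(y)$.

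Third, I would invoke Schlichting's additivity theorem for GW spectra of a sequence of dg categories with duality (see \cite[Section~6]{Schder}), which yields an equivalence $GW(\mc A) \oplus GW(\mc C) \xrightarrow{\sim} GW(\sPerf(\mbb P^\sigma_X))$ whenever the sequence is Karoubi-exact and the middle category admits the appropriate semiorthogonal decomposition. The splitting is realized precisely by $(x,y) \mapsto p^*(x) + \beta^\sigma \cup p^*(y)$. For the localizing variant $\mbb GW$, the same filtration applies and Schlichting's additivity extends to the bispectrum level; alternatively one can apply the spectrum-level localization procedure, since both summands and the pairing behave compatibly.

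The main obstacle will be the second step: pinning down the identification of the quotient $\mc C$ with $\sPerf(X,-\can)[n-1]$ at the level of dg categories with duality, and checking that under this identification the class of $\beta^\sigma$ corresponds to the rank-one unit form. This is not purely formal because one must track the duality and canonical identification carefully through the cone construction. The verification that $\phi$ is a skew-symmetric form (done explicitly in the excerpt) is the first step; the remaining work is to promote this to a functor-level equivalence of categories with duality, rather than just an equivalence of underlying triangulated categories.
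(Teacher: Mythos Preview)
Your approach is essentially the same as the paper's: filter $\sPerf(\mbb P^\sigma_X)$ by the image of $p^*$, identify the quotient with $\sPerf(X)$ carrying the $(-\can)$-shifted duality via the form functor $\beta^\sigma\otimes(-)$, and split the resulting fibration. The paper packages this as a Morita exact sequence
\[
(\sPerf(X),\mathrm{quis}) \xrightarrow{p^*} (\sPerf(\mbb P^1_X),\mathrm{quis}) \rightarrow (\sPerf(\mbb P^1_X),w)
\]
with $w$ the morphisms inverted in the Verdier quotient, and then invokes the localization fibration rather than additivity directly; but this is a matter of phrasing within Schlichting's framework.

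One point worth noting: the obstacle you flag in your second step is handled in the paper more cheaply than you anticipate. Rather than promoting the identification of the quotient to a dg-level duality-preserving equivalence, the paper observes that (i) $\beta^\sigma\otimes(-)$ is already an exact dg \emph{form} functor into $(\sPerf(\mbb P^1_X),w)$, since $\beta^\sigma$ was constructed as a symmetric space, and (ii) the composite induces an equivalence of underlying triangulated categories, which follows from the classical $\mbb P^1$ case because $\beta^\sigma$ differs from the standard Bott element by a linear change of coordinates and the involution only affects the duality, not the underlying category. One then invokes invariance of $GW$ and $\mbb GW$ under derived equivalences \cite[Theorems~6.5, 8.9]{Schder}. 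So you never need to exhibit $\mc C\simeq\sPerf(X,-\can)$ as dg categories with duality; a form functor which is a triangulated equivalence suffices.
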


\begin{proof}
The proof of Theorem 9.10 in \cite{Schder} can be easily adapted. Note
that our Bott element $\beta^\sigma$ is  a linear change of
coordinates from the standard Bott element on $\mbb P^1$. Keeping in
mind that the involution only affects the duality and not the
underlying derived category with weak equivalences, it's still true
that $\beta^\sigma \otimes : \mc T\mathrm{sPerf}(X) \rightarrow \mc
T\mathrm{sPerf}(\mbb P^1_X)/p^*\mc T\mathrm{sPerf}(X)$ is an
equivalence of triangulated categories. As in
\textit{loc. cit.}, if we denote by $w$ the set of morphisms in
$\sPerf(\mbb P^1_X)$ which are isomorphisms in $\mc T\sPerf(\mbb
P^1_X)/p^*\mc T\sPerf(X)$, we get a sequence
\[
\xymatrix{(\sPerf(X),\mathrm{quis}) \ar[r]^{p^*} & (\sPerf(\mbb
  P^1_X),\mathrm{quis}) \ar[r] & (\sPerf(\mbb P^1_X),w)}
\]
which is a Morita exact sequence of categories with duality. That is,
the maps are maps of categories with duality, and the underlying
sequence of categories is Morita exact. It follows that this sequence
induces a homotopy fibration of $GW^{[n]}$ and $\mbb GW^{[n]}$
spectra. As remarked above, these fibration sequences split via the
exact dg form functors
\[
\xymatrix{(\sPerf(X),\mathrm{quis}) \ar[r]^{\beta^\sigma \otimes} & (\sPerf(\mbb
  P^1_X),\mathrm{quis}) \ar[r] & (\sPerf(\mbb P^1_X),w)}
\]
so that the composite induces an equivalence of triangulated categories.
Finally, using that $GW$ and $\mbb GW$ are invariant under derived
equivalences \cite[Theorem 6.5]{Schder} \cite[Theorem 8.9]{Schder}, we conclude the theorem.
\end{proof}

Considering $GW$ as a presheaf of spectra on $\Sch{S,qp}^{C_2}$ it follows from
Theorem \ref{thm:proj_inv} that $GW^{[n]}(\mbb P^\sigma,[1:1]) \cong
GW^{[n-1]}(X,-\can) \cong GW^{[n+1]}(X)$, recovering one of the results of
  \cite{Xie2018ATM}. Hence
\[
\iHom(\Sigma^\infty (\mbb P^\sigma,[1:1]),GW^{[n]}) \cong
GW^{[n+1]}
\]
as presheaves of spectra on $\Sch{S,qp}^{C_2}$. In particular, by the
projective bundle formula from \cite{Schder} and the usual cofiber
sequence 
\[
([1:1] \times \mbb P^\sigma) \vee (\mbb P^1 \times [1:1]) \rightarrow
\mbb P^\sigma \times \mbb P^1 \rightarrow \mbb P^\sigma \wedge \mbb P^1
\]
we obtain the periodicity isomorphism  
\[
\iHom((\mbb P^1,[1:1]) \wedge (\mbb P^\sigma,[1:1]),GW^{[n]}) \cong
GW^{[n]}
\]
induced by the map 
\begin{align*}
GW^{[n]}(X) &\rightarrow GW^{[n+1]}(\mbb P^1_X) \rightarrow
              GW^{[n]}(\mbb P^\sigma_{\mbb P^1_X})\\
& x \mapsto \beta \cup p^*(x) \mapsto \beta^\sigma \cup q^*(\mc O_X[-1]
  \otimes \beta \cup p^*(x))
\end{align*}
where $p$ is the projection $\mbb P^1_X \rightarrow X$, and $q$ is the
projection $\mbb P^\sigma_{\mbb P^1_X} \rightarrow \mbb P^1_X$. The
analogous statements hold for the presheaf of spectra $\mbb GW$.

As notation for later, let $\beta^{1+\sigma}$ denote the induced map
\begin{equation}\label{eq:bott_element}
\beta^{1+\sigma} : (\mbb P^1,[1:1]) \wedge (\mbb P^\sigma,[1:1])
\rightarrow GW.
\end{equation}

\begin{lemma}\label{lem:bott_zero}
The Bott element $\beta^\sigma$ restricts to zero in $C_2 \times \mbb
A^\sigma = \mbb P^{\sigma} - [1:0] \coprod \mbb P^\sigma - [0:1]$. 
\end{lemma}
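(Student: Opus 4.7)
The plan is to exploit the free $C_2$-action on $C_2 \times \mbb A^\sigma \cong (\mbb P^\sigma - [1:0]) \sqcup (\mbb P^\sigma - [0:1])$ (with $C_2$ swapping the two components) in order to reduce the statement to an elementary fact about $K$-theory of an affine scheme. The first step is to observe that, locally on this $C_2$-scheme, the structure sheaf with involution has the form $(R \times R,\,\mathrm{swap})$ for $R$ a localization of $\mc O_{\mbb A^1}$, so Corollary \ref{cor:HermFormsOverSwitch} applies: every non-degenerate Hermitian module is hyperbolic of a projective $R$-module.

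Extending this local fact to strictly perfect complexes and invoking invariance of $\mbb GW$ under derived equivalences (\cite[Theorem 8.9]{Schder}), the hyperbolic functor induces an equivalence of Grothendieck-Witt spectra
\[
H \colon K(\mbb A^\sigma) \xrightarrow{\sim} \mbb GW(C_2 \times \mbb A^\sigma).
\]
Under this equivalence, the class of a Hermitian complex pulled back from $\mbb P^\sigma$ corresponds to the $K$-theory class of its restriction (as a bare complex, forgetting the form) to one of the two connected components.

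Apply this to $\beta^\sigma$: its underlying complex is the three-term sequence $\mc O(-1) \to \mc O \oplus \mc O^* \to \mc O(-1)^*$, which was shown during the construction of $\beta^\sigma$ to be exact. Exactness is local, so the restriction to either $\mbb P^\sigma - [0:1]$ or $\mbb P^\sigma - [1:0]$ remains exact. A bounded exact complex of vector bundles on an affine scheme is contractible and has trivial class in $K$-theory. Therefore $\beta^\sigma|_{C_2 \times \mbb A^\sigma} = H(0) = 0$.

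The main subtlety is to verify that the equivalence $H$ is compatible with pullback from $\mbb P^\sigma$, i.e., that the inverse of $H$ sends a Hermitian complex pulled back from $\mbb P^\sigma$ to its underlying bare complex on a single component. Locally this is the content (implicit in the proof of Corollary \ref{cor:HermFormsOverSwitch}) that a Hermitian module on $(R \times R,\mathrm{swap})$ of the hyperbolic form $H(N)$ is recovered from the projection $e_1M = N$ to its first idempotent summand.
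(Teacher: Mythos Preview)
Your strategy via the hyperbolic identification $\mbb GW(C_2 \times \mbb A^\sigma) \simeq K(\mbb A^1)$ is reasonable in outline, but the argument breaks at the identification of the underlying complex of $\beta^\sigma$. The element $\beta^\sigma$ is obtained by applying the cone functor
\[
\Cone : \Fun([1],\Ch^b\Vect(\mbb P^\sigma))^{[0]} \rightarrow (\Ch^b\Vect(\mbb P^\sigma))^{[1]}
\]
to the arrow $\mc O(-1) \xrightarrow{(T+S)/2} \mc O$, equipped with the degenerate form $\phi$ given by the vertical maps $(T-S)/2$. The output is the two-term complex $[\mc O(-1) \xrightarrow{(T+S)/2} \mc O]$ together with the induced map to its shifted dual; the three-term sequence $\mc O(-1) \to \mc O \oplus \mc O^* \to \mc O(-1)^*$ appears in the construction only as the tool for checking that this induced form is a quasi-isomorphism (i.e., that $\beta^\sigma$ is non-degenerate), not as the underlying object of $\beta^\sigma$.

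This matters because the two-term complex $[\mc O(-1) \xrightarrow{(T+S)/2} \mc O]$ has cohomology exactly the residue field at $[1:-1]$, and $[1:-1]$ lies in \emph{both} affine opens $\mbb P^\sigma - [1:0]$ and $\mbb P^\sigma - [0:1]$. Hence the restriction of the correct underlying complex to either component of $C_2 \times \mbb A^\sigma$ is not exact, and your exactness argument does not apply. (The three-term complex, by contrast, is indeed globally acyclic, which would make $\beta^\sigma$ vanish in $K$-theory of $\mbb P^\sigma$ itself---consistent with it being a genuinely Hermitian phenomenon---but that is not what you need.)

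Your approach can be salvaged: after the hyperbolic identification, the class in $K_0(\mbb A^1)$ is represented by the two-term complex $[\mc O \to \mc O]$, whose Euler characteristic is zero; since $K_0(\mbb A^1) \cong \mbb Z$ via rank, this class vanishes. But that is a different argument than the one you gave. The paper's own proof avoids the hyperbolic identification entirely: it observes directly that $\beta^\sigma$ is supported on $[1:-1]$, then uses the localization sequence together with $\mbb A^1$-homotopy invariance of $GW$ over the regular base $\mbb Z[\tfrac12]$ to show that the ``extension of support'' map $GW^{[n]}(C_2 \times \mbb A^\sigma \text{ on } [1:-1]) \to GW^{[n]}(C_2 \times \mbb A^\sigma)$ is zero.
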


\begin{proof}
As in \cite{Schder}, because the Bott element is natural it suffices
to prove that the bott element $\beta^\sigma$ in $\mbb
P^\sigma_{\Z[\frac{1}{2}]}$ restricts to zero. From the definition of
the Bott element, it's clear that it's supported on $[1:-1]$. There's a commutative diagram
\[
\xymatrix{GW^{[n]}(C_2 \times \mbb A^\sigma \text{ on } [1:-1] \coprod
[1:-1]) \ar[r]^-k & GW^{[n]}(C_2 \times \mbb A^\sigma) \ar[d]^f \ar[dl]_g \\
GW^{[n]}(C_2 \times \mbb A^\sigma - [1:-1]) \ar[r]^h &
GW^{[n]}(C_2 \times \Spec(\Z[\frac{1}{2}])) 
} 
\]
where $f$ and $h$ are induced by inclusion of the point
$[1:1]$. Because $\Z[\frac{1}{2}]$ is regular and $C_2 \times
\mbb A^\sigma$ is equivariantly isomorphic to $C_2 \times \mbb A^1$, 
\cite[Theorem 7.5]{Xie2018ATM} shows that $f$ is an isomorphism,
hence $g$ is an injection. By localization
\cite[Theorem 6.6]{Schder}, the maps $k$ and $g$ compose to form an exact sequence,
 and it follows that $k$ is the zero
map. 
\end{proof}

\subsection{The Periodization of $GW$}

The idea behind the Bass construction in algebraic $K$-theory is that
as a consequence of satisfying localization, there is a
Bass exact sequence ending in
\[
\cdots \rightarrow K_n(\mbb G_m) \xrightarrow{\partial} K_{n-1}(X) \rightarrow 0
\]
for all $n$. This comes from applying $K$-theory to the pushout square manifesting
the usual cover of $\mbb P^1$ together with the projective bundle formula. The map $\partial$ is split by $x
\mapsto [T] \cup p^*(x)$ where $p$ is the projection to the base scheme $p : \mbb G_m
\rightarrow X$. It follows that if $K$ exhibits an exact Bass sequence
in all degrees $n$, then $K_{n-1}(X)$ can be identified with
the image of $\partial([T]) \cup -$ (i.e. this map is an automorphism
of $K_{n-1}(X)$. In fact, $\partial([T]) \cup -$ is the idempotent endomorphism
$(0,1)$ of $K_{0}(\mbb P^1) \cong K_0(X) \oplus K_0(X)$).
 The Bass construction can be thought of as defining
$K^B_n(X)$ so that there's an exact sequence $K^B_n(\mbb A^1) \oplus
K^B_n(\mbb A^1) \rightarrow K^B_n(\mbb G_m) \rightarrow K^B_{n-1}(X)$,
then identifying $K^B_{n-1}(X)$ with $(0,1) \cdot
K^B_{n-1}(\mbb P^1)$. In other words, it can be constructed as the colimit
\[
K^B = \colim(K \rightarrow \iHom(\mbb A^1 \coprod_{\mbb G_m} \mbb
A^1,K) \rightarrow \dots)
\] 
where the pushouts are taken in presheaves and the maps are induced by applying $\iHom(-,K)$ in the category of
$K$-modules to the composite
\[
\mbb A^1 \coprod_{\mbb G_m} \mbb A^1 \rightarrow \Sigma \mbb G_m
\xrightarrow{T} K.
\]

Here, loosely speaking, the first map in the composite represents the
boundary in the long Bass exact sequence $\partial$
while the second represents $[T]$, so that in the category of $K$
modules this map represents cup product with $\partial([T])$. 

We'll spell out an example a bit more explicitly to give a flavor for the
constructions to come. Let $W = \mbb A^1 \coprod_{\mbb G_m} \mbb A^1$,
where we emphasize again that the pushout is in the category of
presheaves. Because this is a (homotopy) pushout in the category of
presheaves, applying $\iHom(-,K)$ gives us a homotopy pullback square,
and hence a Mayer-Vietoris long exact sequence. In particular, it gives us a map
of presheaves of spectra (which can be promoted to a map of $K$-modules) $\Omega  K(\mbb G_m) \rightarrow K(W)$, where
we abuse notation and write $K(W)$ for the internal hom of $W$ into
$K$. Because $K^B$ satisfies Nisnevich descent, and $K_i(-) =
K^B_i(-)$ for $i \geq 0$, it follows by the 5-lemma that $K_0(W) \cong
K_0(\mbb P^1) \cong K_0(X) \oplus K_0(X)$, and that the element
$\partial([T]) \cup -$ represents projection onto the second factor as an
endomorphism of $K_0(W)$. 

Now,
we want to explain why $\partial([T]) \cup K_{-1}(W) \cong
K^B_{-1}(X)$. We'll use the fact that $K^B(W) \cong K^B(\mbb P^1)$ and that
$K^B_{-1}(X) = \partial([T]) \cup K^B_{-1}(\mbb P^1)
= \partial(K^B_0(\mbb G_m))$. 

To begin, because $\partial([T])$ is zero in
$K_0(\mbb A^1)$, the image of $\partial([T]) \cup K_{-1}(W)$ in
$K_{-1}(\mbb A^1) \oplus K_{-1}(\mbb A^1)$ is zero. By exactness, it follows that
$\partial([T]) \cup K_{-1}(W)  \subseteq  \partial K_0(\mbb G_m)$. 

There's a map $\phi : K_{-1}(W) \rightarrow K_{-1}^B(W) \cong
K_{-1}^B(\mbb P^1)$ and a commutative diagram
\[
\xymatrix{K_0(\mbb A^1) \oplus K_0(\mbb A^1) \ar[r] \ar[d] & K_0(\mbb
  G_m) \ar[r]^\partial \ar[d] & K_{-1}(W) \ar[d]^\phi\\
K_0(\mbb A^1) \oplus K_0(\mbb A^1) \ar[r] & K_0(\mbb
  G_m) \ar[r]^\partial & K_{-1}^B(W)
}
\]
which shows that $\phi$ restricts to an isomorphism $\partial(K_0(\mbb
  G_m)) \cong \partial(K_0^B(\mbb
  G_m))$, and in particular that $\phi(\partial(K_0(\mbb
  G_m)) ) = \partial(K_0^B(\mbb
  G_m))$. Now 
\[
\phi(\partial([T]) \cup \partial(K_{0}(\mbb G_m)) = \partial([T]) \cup
  \phi(\partial K_{0}(\mbb G_m)) = \partial([T]) \cup \partial
  K_{0}^B(\mbb G_m) = \partial K_0^B(\mbb G_m)
\]
where we've crucially used that for Bass $K$-theory, $\partial([T])
\cup \partial K_0^B(\mbb G_m) = \partial([T])
\cup \partial K_0^B(\mbb P^1) = \partial K_0^B(\mbb G_m)$. 

But as remarked above, the fact that $\partial([T])$ is trivial in
$K_0(\mbb A^1)$ implies that $\partial([T]) \cup \partial(K_{0}(\mbb G_m))
\subseteq \partial(K_{0}(\mbb G_m))$, and we know that
$\phi|_{\partial K_0(\mbb G_m)}$ is an isomorphism. Since
$\phi|_{\partial([T]) \cup \partial(K_{0}(\mbb G_m))}$ is surjective
by the chain of equalities above,
it follows that $\partial
K_0(\mbb G_m) =\partial([T]) \cup K_{-1}(W)$. We've shown that
\[
\partial([T]) \cup K_{-1}(W) = \partial K_0(\mbb G_m) \cong \partial
K_0^B(\mbb G_m) \cong K_{-1}^B(X).
\]

If we take pointed versions of the above sequences by pointing all the
schemes in question at $[1:1]$ everything goes through as above with
the extra benefit that $\partial([T]) \cup K^B_{-1}(W,1) =
K^B_{-1}(W,1)$, and the map $K^B(X) \rightarrow K^B(W,1)$, $x \mapsto
p^*(x) \cup \partial([T])$ is an isomorphism by the projective bundle
formula. Now the map $p : (W,1) \rightarrow 1$ is split by inclusion
of the base point, and thus $p^* : K(W,1) \rightarrow K((W,1) \otimes
(W,1))$ is injective. Furthermore, $p^*(x \cup \partial([T]))
= \partial([T]) \cup p^*(x)$, so that the image of $K_{-1}(W,1)$ in
$K_{-1}((W,1) \otimes (W,1))$ under the map $x \mapsto \partial([T])
\cup p^*(x)$ is, by what we showed above, isomorphic
to $K^B_{-1}(X)$. This shows that
\[
\pi_{-1}K^B = \pi_{-1}\colim(K \rightarrow \iHom(\mbb A^1 \coprod_{\mbb G_m} \mbb
A^1,K) \rightarrow \cdots).
\] 

This argument is mostly formal given a few pieces of structural
information:
\begin{itemize}
\item A map $K \rightarrow K^B$ which respects cup
  products,
\item Nisnevich descent for $K^B$, and
\item A Bass exact sequence split by cup product with an element in
  $K_1(\mbb G^m)$.
\end{itemize}

The remainder of this section will show that these three pieces of
structure are present
for Grothendieck-Witt groups, which will allow us to repeat
essentially the same argument to give a construction of the localizing
$\mbb GW$ as a periodization of $GW$. When the base scheme is a
perfect field, a similar construction
of $GW$ as a periodic spectrum was given in \cite{HuKriz}.

First, equivariant Nisnevich
descent for $\mbb GW$ is a consequence of results from \cite{Schder}.

\begin{lemma}
$\mbb GW$ is Nisnevich excisive on the category of schemes with
an ample family of line bundles over $S$. 
\end{lemma}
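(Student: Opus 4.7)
The plan is to deduce Nisnevich excision for $\mbb{GW}$ from Schlichting's localization theorem for dg categories with weak equivalences and duality (\cite[Theorem 8.10]{Schder}) together with a derived version of étale excision for complexes supported on a closed subset. The condition that $\mbb{GW}(\emptyset)$ is contractible is immediate from the definition, so the real content is to show that $\mbb{GW}$ sends a distinguished equivariant Nisnevich square
\[
\xymatrix{B \ar[r]\ar[d] & Y \ar[d]^p \\ A \ar@{^{(}->}[r]^i & X}
\]
to a homotopy cartesian square of spectra.

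First I would invoke Schlichting's localization sequence: for any equivariant closed immersion $Z \hookrightarrow X$ with open complement $U$, the Morita sequence
\[
(\sPerf_Z(X),\text{q.\,iso}) \longrightarrow (\sPerf(X),\text{q.\,iso}) \longrightarrow (\sPerf(U),\text{q.\,iso})
\]
is a sequence of dg categories with weak equivalences and duality that is Morita exact (this uses the ample family of line bundles assumption so that $\sPerf$ computes the right $K$-theory and $\mbb{GW}$, and the fact that the duality on $\sPerf(X)$ restricts to $\sPerf_Z(X)$). Applying $\mbb{GW}$ then yields a homotopy fibration
\[
\mbb{GW}(X\text{ on }Z) \longrightarrow \mbb{GW}(X) \longrightarrow \mbb{GW}(U).
\]
Specializing to $Z = X - A$ with open complement $A$ upstairs, and to $Z' = Y - B$ with open complement $B$ downstairs (the pullbacks), produces two fibration sequences connected by the evident maps induced by $p$.

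The heart of the argument is then étale excision: the pullback functor $p^*$ induces an equivalence of dg categories with duality
\[
(\sPerf_{X-A}(X),\text{q.\,iso}) \xrightarrow{\sim} (\sPerf_{Y-B}(Y),\text{q.\,iso}).
\]
Classically this is Thomason--Trobaugh/Schlichting excision for perfect complexes supported on a closed subscheme along an étale morphism that is an isomorphism over the support (here the reducedness requirement $(Y-B)_\mathrm{red} \xrightarrow{\sim} (X-A)_\mathrm{red}$ is exactly what guarantees the excision hypothesis, and nilpotent thickenings do not change $\sPerf$ with supports up to Morita equivalence). The duality passes through because $p$ is étale and $p^{-1}(X-A) \to X-A$ is an isomorphism, so the relevant sheaf-Homs are computed locally and $p^*$ is compatible with the $\sigma_*(-)^{\vee}[i]$ dualities. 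Consequently $p^*$ induces an equivalence of $\mbb{GW}$-spectra on the fibers.

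Putting this together, the commutative diagram of fibration sequences
\[
\xymatrix{
\mbb{GW}(X\text{ on }X-A) \ar[r]\ar[d]^{p^*}_{\sim} & \mbb{GW}(X) \ar[r]\ar[d] & \mbb{GW}(A) \ar[d] \\
\mbb{GW}(Y\text{ on }Y-B) \ar[r] & \mbb{GW}(Y) \ar[r] & \mbb{GW}(B)
}
\]
has an equivalence on the fibers, which forces the right-hand square to be homotopy cartesian. The main obstacle, and the step I would want to cite carefully rather than reprove, is the duality-compatible form of étale excision for $\sPerf$ with supports in the equivariant setting; all of the other ingredients are either Schlichting's localization theorem applied verbatim or direct consequences of the definitions.
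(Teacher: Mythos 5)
Your proposal is correct and follows essentially the same path as the paper: both apply Schlichting's localization sequence to produce fibrations $\mbb GW(X \text{ on } Z) \to \mbb GW(X) \to \mbb GW(A)$ and $\mbb GW(Y \text{ on } Z) \to \mbb GW(Y) \to \mbb GW(B)$, then use the Thomason--Trobaugh excision result that $p^*$ is a (Morita/quasi-) equivalence on $\sPerf$ with supports together with derived invariance of $\mbb GW$ to identify the fibers. The paper states this more tersely, citing Thomason's Theorem 2.6.3 and Schlichting's Theorems 8.9 and 9.5 directly; your added remarks on the reducedness hypothesis and on checking compatibility of the duality along $p^*$ are accurate, albeit left implicit in the paper's version.
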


\begin{proof}
Recall that the distinguished squares defining the equivariant
Nisnevich cd-structure are cartesian squares in $\Sch{S,qp}^G$ 

\[
\xymatrix{B \ar[r]\ar[d] & Y \ar[d]^p \\ A \ar[r]^j & X}
\]

where $j$ is an open immersion, $p$ is \'etale, and
$(Y-B)_{\mathrm{red}} \rightarrow (X-A)_{\mathrm{red}}$ is an
isomorphism. 

As in \cite[Theorem 9.6]{Schder}, a result of Thomason \cite[Theorem 2.6.3]{Thomason2007} tells us that the map $p$
induces a quasi-equivalence of dg categories
\[
p^* : \mathrm{sPerf}_Z(X) \rightarrow \mathrm{sPerf}_Z(Y).
\]
Because $\mbb GW$ is invariant under derived
equivalences \cite[Theorem 8.9]{Schder}, it follows that $p^*$ induces
an isomorphism on Grothendieck-Witt groups. Noting that $U$ and the closed
subset $Z = X-A$ are $G$-invariant, the localization
sequence \cite[Theorem
9.5]{Schder} generalizes to our setting and identifies $\mbb GW(\mathrm{sPerf}_Z(X))$ and $\mbb
GW(\mathrm{sPerf}_Z(Y))$ as the horizontal homotopy fibers. This allows us to conclude the result. 
\end{proof}

 Next, we
identify the analogues of the Bass sequence and the splittings
therein. From \cite[Theorem 9.13]{Schder}, we know that there's a Bass sequence
\[
\xymatrix{0 \ar[r] & \mbb GW^{[n]}_i(X) \ar[r] & \mbb GW^{[n]}_i(\mbb
  A^1_X) \oplus \mbb GW^{[n]}_i(\mbb A^1_X) \\ \ar[r] & \mbb
  GW^{[n]}_i(X[T,T^{-1}]) \ar[r] & \mbb GW^{[n-1]}_{i-1}(X) \ar[r] & 0}
\]
where the last non-trivial map is split by cup product with (the
pullback of) $[T]$ in $\mbb
GW_1^{[1]}(\Z[\frac{1}{2}][T,T^{-1}])$. This gives us a candidate map
$\mbb A^1 \coprod_{\mbb G_m} \mbb A^1 \rightarrow \Sigma \mbb G_m \xrightarrow{[T]} GW^{[1]}$. 

Now, we want to find a candidate map $\Sigma^\sigma \mbb G_m^\sigma
\rightarrow GW^{[-1]}$ so that we can eventually invert 
\[
\Sigma^\sigma
\mbb G_m^\sigma \otimes \Sigma \mbb G_m \rightarrow GW^{[-1]} \otimes
GW^{[1]} \rightarrow GW^{[0]}.
\]
Define $W_\sigma$ by the pushout square in
the category of presheaves
\[
\xymatrix{(C_2 \times \mbb G_m^\sigma)_+ \ar[r]\ar[d] & (C_2 \times \mbb
  A^\sigma)_+ \ar[d] \\ (\mbb G_m^\sigma)_+ \ar[r] & W_\sigma}
\]
There's an associated homotopy pushout square
\[
\xymatrix{(C_2 \times \mbb G_m^\sigma)_+/(C_2)_+ \ar[r]\ar[d] & (C_2 \times \mbb
  A^\sigma)_+/(C_2)_+ \ar[d] \\ (\mbb G_m^\sigma)_+/S^0 \ar[r] & W_\sigma/S^0}
\]
and taking the homotopy cofiber of the left vertical map yields
$S^\sigma \wedge \mbb G_m^\sigma$. It follows that the homotopy cofiber
of the right vertical map is equivalent to  $S^\sigma \wedge \mbb
G_m^\sigma$, and that there's a long exact sequence
\begin{equation}\label{eq:bass_inv}
\xymatrix{\cdots \ar[r] & GW_i^{[n]}(S^\sigma \wedge \mbb G_m^\sigma) \ar[r] &
  GW_i^{[n]}(W_\sigma/S^0) \\ \ar[r] & GW_i^{[n]}((C_2 \times \mbb
  A^\sigma)_+/(C_2)_+) \ar[r] & \cdots }
\end{equation}
Here if $\mbb A^\sigma_S \cong S$, then $(C_2 \times \mbb
  A^\sigma)_+/(C_2)_+ \cong (C_2)_+ \wedge \mbb A^\sigma$ is
  contractible and $W/S^0 \cong S^\sigma \wedge \mbb G_m^\sigma$. Working over the regular ring $\Z[\frac{1}{2}]$, $GW(W_\sigma/S^0)
\cong GW(\mbb P^\sigma/S^0)$, and
\[  
GW^{[n]}_i(W_\sigma/S^0) \cong GW^{[n]}_i(\mbb P^\sigma/S^0) \cong GW^{[n+1]}_i(S)
\]
by the projective bundle formula \ref{thm:proj_inv}. 

The maps in the sequence
  \eqref{eq:bass_inv} are maps of $GW^{[0]}_*$-modules, and the
  sequence is natural in the base scheme. The induced map
\[
GW^{[-1]}_0(S^\sigma \wedge \mbb G_m^\sigma) \rightarrow GW^{[0]}_0(\Z[\frac{1}{2}])
\]
is an isomorphism of $GW^{[0]}_0(\Z[\frac{1}{2}])$-modules, and hence
the inverse is uniquely determined by a lift of the element $\langle 1 \rangle
\in GW^{[0]}_0(\Z[\frac{1}{2}])$ to $GW^{[-1]}_0(S^\sigma \wedge \mbb
G_m^\sigma)$. We stress that this element $\langle 1 \rangle$ maps to
$\beta^\sigma \cup \mc O_{\Z[\frac{1}{2}]}[-1] \cup \langle 1 \rangle$ in $GW(\mbb P^\sigma)$,
and in particular it isn't the unit of multiplication in $GW(\mbb P^\sigma)$. We'll denote this element by $[T^\sigma]$ in analogy with
the non-equivariant case. 

Over an arbitrary base scheme $X$, we denote by $[T^\sigma]$ the pullback of
$[T^\sigma]$ to $GW^{[-1]}_0(S^\sigma \wedge \mbb G_m^\sigma
\times_{\Z[\frac{1}{2}]} X)$ using functoriality of $GW$. We summarize
in the definition below.

\begin{definition}\label{def:T_Tsigma}
Let  $[T]$ denote the class of the element $T$ in $\mbb
GW_1^{[1]}(\Z[\frac{1}{2}][T,T^{-1}])$. Let $\partial([T])$ denote the
image of $[T]$ under the connecting map in the Bass sequence
\[
\partial : GW_1^{[1]}(\Z[\frac{1}{2}][T,T^{-1}]) \rightarrow
GW_0^{[1]}(\mbb P^1_{\Z[\frac{1}{2}]}).
\]

Let $[T^\sigma]$ denote the lift of the element $\langle 1 \rangle
\in GW^{[0]}_0(\Z[\frac{1}{2}])$ to $GW^{[-1]}_0(S^\sigma \wedge \mbb
G_m^\sigma)$. Let $\partial([T^\sigma])$ denote the image of
$[T^\sigma]$ under the connecting map in the long exact sequence
\ref{eq:bass_inv}
\[
\partial : \xymatrix{GW_1^{[-1]}(S^\sigma \wedge \mbb G_m^\sigma) \ar[r] &
  GW_0^{[-1]}(W_\sigma/S^0)}.
\]

Over an arbitrary scheme $S$ with $\frac{1}{2} \in S$, let $[T]$ and
$[T^\sigma]$ denote the pullbacks $f^*([T])$, $f^*([T^\sigma])$ under
the unique map $f : S \rightarrow \mbb Z[\frac{1}{2}]$, and similarly
for $\partial([T])$ and $\partial([T^\sigma])$.
\end{definition}

Let $W = (\mbb A^1 \coprod_{\mbb G_m} \mbb A^1)_+$. Now (by taking the pointed version of everything) we have a candidate map
\begin{equation}\label{eq:gamma}
\gamma : W_\sigma/S^0 \otimes W/S^0 \rightarrow
S^\sigma \wedge \mbb G_m^\sigma \otimes S^1 \wedge \mbb G_m
\xrightarrow{[T^\sigma] \otimes [T]}
GW^{[-1]} \otimes GW^{[1]} \rightarrow GW
\end{equation}
to invert. 

Given a presentably symmetric monoidal $\infty$-category and a
morphism $\alpha : x \rightarrow \mathbf{1}$ to the monoidal unit,
define 
\[
Q_\alpha E = \colim(E \xrightarrow{\alpha} \iHom(x,E)
\xrightarrow{\alpha} \iHom(x^{\otimes 2}, E) \xrightarrow{\alpha}
\dots ).
\]
In general $Q_\alpha E$ is not the periodization of $E$ with respect
to $\alpha$, one obstruction being that the cyclic permutation of
$\alpha^3$ can fail to be homotopic to the identity. This matters
because checking periodicity requires permuting $\alpha \otimes id$ to $id \otimes \alpha$, and these
can fail to be homotopic. 

\begin{lemma}\label{lem:GWb_eq}
The canonical map $\mbb GW \rightarrow Q_\gamma \mbb GW$ is an
equivalence of (pre)sheaves of spectra on $\Sch{S,qp}^{C_2}$. 
\end{lemma}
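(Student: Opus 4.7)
The plan is to show that each transition map in the sequential colimit defining $Q_\gamma \mbb GW$ is already an equivalence, so the colimit stabilizes at the first stage and the canonical map $\mbb GW \to Q_\gamma \mbb GW$ is an equivalence of presheaves of spectra. The approach has three ingredients: Nisnevich descent to replace $W$ and $W_\sigma$ by their associated projective lines, the projective bundle formula of Theorem~\ref{thm:proj_inv} to identify the resulting internal hom with $\mbb GW$ itself, and a diagram chase to see that cup product with $\gamma$ realizes this projective bundle equivalence.

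First I would rewrite $\iHom(W_\sigma/S^0 \otimes W/S^0, \mbb GW)$ as $\iHom((\mbb P^\sigma, [1:1]) \wedge (\mbb P^1, [1:1]), \mbb GW)$. Recall that $W$ is built from the standard Zariski cover of $\mbb P^1$ by two copies of $\mbb A^1$ glued along $\mbb G_m$, while $W_\sigma$ is built from the equivariant Nisnevich square for $\mbb P^\sigma$ used in the sphere computations of Section~\ref{chap:Eq_Top}. Applying $\iHom(-, \mbb GW)$ to these pushout squares and using Nisnevich excision for $\mbb GW$ (just proved) gives the desired identification. Combining Theorem~\ref{thm:proj_inv} with Schlichting's projective bundle formula for $\mbb P^1$ and the identification $\mbb GW^{[n-1]}(-,-\can) \cong \mbb GW^{[n+1]}$, cup product with the Bott class $\beta^{1+\sigma}$ of~\eqref{eq:bott_element} defines an equivalence
$$\mbb GW \xrightarrow{\sim} \iHom((\mbb P^1, [1:1]) \wedge (\mbb P^\sigma, [1:1]), \mbb GW).$$

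The main obstacle is to verify that, under the above Nisnevich identification, the first transition map of the tower defining $Q_\gamma \mbb GW$ — which sends $u$ to cup product with the external tensor $[T^\sigma] \otimes [T]$ — coincides with cup product by $\beta^{1+\sigma}$. By Definition~\ref{def:T_Tsigma}, the Bass connecting homomorphisms transport $[T^\sigma]$ and $[T]$ to (units times) $\beta^\sigma$ and $\beta$ respectively, and these Bass boundaries agree with the Mayer--Vietoris boundaries coming from the Nisnevich pushout squares defining $W_\sigma$ and $W$. Tracking the $\mc O[-1]$ twist appearing in the construction of $\beta^{1+\sigma}$ and the signs coming from the dualities $\ast^n$ is where the bookkeeping is most delicate, but it is dictated by the naturality of the Bass connecting map in the category of $\mbb GW$-modules. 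Once this compatibility is in place, every map in the tower $\mbb GW \to \iHom((W_\sigma/S^0 \otimes W/S^0)^{\otimes n}, \mbb GW)$ is an equivalence, and the lemma follows by passing to the colimit.
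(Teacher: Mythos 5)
Your proposal is correct and follows essentially the same route as the paper's proof: use Nisnevich excision to replace $W/S^0 \otimes W_\sigma/S^0$ by $(\mbb P^1,[1:1]) \wedge (\mbb P^\sigma,[1:1])$, invoke the two projective bundle formulas (Theorem~\ref{thm:proj_inv} for $\mbb P^\sigma$ and Schlichting's for $\mbb P^1$) to identify $\iHom((\mbb P^1,[1:1]) \wedge (\mbb P^\sigma,[1:1]), \mbb GW) \simeq \mbb GW$, and observe that cup product with $\partial([T])$ and $\partial([T^\sigma])$ realizes this equivalence as projection onto the appropriate summands, so that every transition map in the $Q_\gamma$ tower is already an equivalence. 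The delicate bookkeeping you flag around the $\mc O[-1]$ twist is discharged in the paper by the way $[T^\sigma]$ is defined (as a lift under $\partial$ of $\langle 1\rangle$ in the long exact sequence \eqref{eq:bass_inv}), which is precisely the observation your plan relies on.
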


\begin{proof}
We know by the projective bundle formulas
that 
\begin{align*}
\mbb GW(\mbb P^\sigma \times \mbb P^1 = \mbb
P^\sigma_{\mbb P^1}) &\cong \mbb GW(\mbb P^1) \oplus \mbb GW^{[1]}(\mbb
P^1)\\ &\cong \mbb GW(X) \oplus \mbb GW^{[-1]}(X) \oplus \mbb
GW^{[1]}(X) \oplus \mbb GW(X).
\end{align*}
We claim that under this isomorphism, cup product with
$\partial [T^\sigma]$ is projection onto $\mbb
GW^{[1]}(\mbb P^1)$ and cup product with $\partial [T]$ on
$GW^{[1]}(\mbb P^1)$ is projection
onto $\mbb GW(X)$. The latter statement is already known from
\cite[Theorem 9.10]{Schder}, so we show the former. It suffices to show that cup
product with $\partial [T^\sigma_{X}] \cup - : \mbb GW^{[n]}(X) \oplus
\mbb GW^{[n+1]}(X) \rightarrow \mbb GW(X) \oplus \mbb GW^{[n+1]}(X)$ is projection
onto the second factor. But this is precisely how $[T^\sigma]$ is
defined: it's a lift under $\partial$ of a generator of $\mbb
GW^{[1]}(X)$, so cup product with it is cup product with $\langle 1
\rangle$ on $GW^{[n+1]}(X)$
and it's necessarily zero on the other factor because it gives a
well-defined element on
the pointed $\mbb GW^{[-1]}(\mbb P^\sigma,[1:1])$.

Because $\mbb GW$ satisfies equivariant Nisnevich descent, $\mbb
GW(W/S^0) \cong \mbb GW(\mbb P^1,[1:1])$, and $\mbb GW(W_\sigma/S^0)
\cong \mbb GW(\mbb P^\sigma,[1:1])$. Now we're essentially done. The
maps in the colimit defining $Q_\gamma \mbb GW$ first identify $\mbb GW_i^{[n]}(X)$
with $\partial([T]) \cup \mbb GW_i^{[n]}(\mbb P^1_X,[1:1])$, then identify
$\mbb GW_i^{[n]}(\mbb P^1_X)$ with  $\partial([T^\sigma]) \cup
\mbb GW_i^{[n]}(\mbb P^\sigma_{\mbb P^1_X},[1:1])$. As we noted above,
the projective bundle formulas imply that the image of $\mbb GW_i^{[n]}(X)$ under these identifications is isomorphic
to $\mbb GW_i^{[n]}(X)$, and hence $Q_\gamma\mbb GW_i^{[n]}(X) \simeq \mbb
GW_i^{[n]}(X)$ as desired.

\end{proof}

\begin{lemma}\label{lem:pos_htpy}
The canonical map $Q_\gamma GW^{[m]} \rightarrow Q_\gamma \mbb GW^{[m]} \cong \mbb
GW^{[m]}$ induces isomorphisms $\pi_nQ_\gamma GW^{[m]} \cong \pi_n \mbb
GW^{[m]}$ for $n \geq 0$ and for all $m$.
\end{lemma}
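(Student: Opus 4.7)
The plan is to reduce the statement to the fact that the canonical map $GW \to \mbb GW$ is already a $\pi_n$-isomorphism for $n \geq 0$ on every scheme, and then argue that the operations used to build $Q_\gamma$ preserve this range. Specifically, I would proceed by first noting (as a standard consequence of Schlichting's construction of $\mbb GW$ as a Bass-style completion of $GW$) that for every $X \in \Sch{S,qp}^{C_2}$ the map $GW^{[m]}(X) \to \mbb GW^{[m]}(X)$ is a $\pi_n$-iso for $n \geq 0$; equivalently, its cofiber presheaf $C^{[m]}$ has $\pi_n C^{[m]}(X) = 0$ for all $n \geq 1$ and all $X$.

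Next I would verify that applying $\iHom(W/S^0, -)$ and $\iHom(W_\sigma/S^0, -)$ preserves the class of presheaves of spectra $C$ with $\pi_n C(Y) = 0$ for $n \geq 1$ and all $Y$. The point is that, using Nisnevich descent (which holds for both $GW$ and $\mbb GW$, and hence for $C$), we have $\iHom(W/S^0, C)(X) \simeq \mathrm{fib}(C(\mbb P^1_X) \to C(X))$ and $\iHom(W_\sigma/S^0, C)(X) \simeq \mathrm{fib}(C(\mbb P^\sigma_X) \to C(X))$. The associated long exact sequences
\[
\pi_{n+1} C(X) \to \pi_n \iHom(W/S^0, C)(X) \to \pi_n C(\mbb P^1_X) \to \pi_n C(X)
\]
and similarly for $W_\sigma/S^0$ then force $\pi_n \iHom(-, C) = 0$ for $n \geq 1$, since the outer terms vanish in that range. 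Iterating, the same holds for $\iHom((W/S^0 \otimes W_\sigma/S^0)^{\otimes k}, C)$ for every $k \geq 0$.

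Consequently, at every stage of the colimit defining $Q_\gamma$, the map
\[
\iHom\bigl((W/S^0 \otimes W_\sigma/S^0)^{\otimes k}, GW^{[m]}\bigr) \longrightarrow \iHom\bigl((W/S^0 \otimes W_\sigma/S^0)^{\otimes k}, \mbb GW^{[m]}\bigr)
\]
is a $\pi_n$-isomorphism for $n \geq 0$. Filtered (sequential) colimits of spectra commute with $\pi_n$, so the induced map $Q_\gamma GW^{[m]} \to Q_\gamma \mbb GW^{[m]}$ is still a $\pi_n$-iso for $n \geq 0$. Composing with the equivalence $Q_\gamma \mbb GW^{[m]} \simeq \mbb GW^{[m]}$ of Lemma \ref{lem:GWb_eq} gives the desired conclusion.

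The main obstacle I anticipate is bookkeeping around the internal hom: one must be careful that $\iHom$ here really is computed in presheaves of spectra (not already in the motivic localization) and that Nisnevich descent for $GW$ and $\mbb GW$ genuinely identifies $\iHom(W/S^0, E)$ with the reduced part of $E$ on $\mbb P^1$, and likewise for $\mbb P^\sigma$; once this is in place, the long exact sequence argument that preserves $\pi_n = 0$ for $n \geq 1$ is essentially formal.
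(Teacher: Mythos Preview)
Your overall strategy matches the paper's (very terse) proof: use that $GW^{[m]} \to \mbb GW^{[m]}$ is a $\pi_n$-isomorphism for $n \geq 0$ on every scheme, check that this survives each application of $\iHom(W/S^0 \otimes W_\sigma/S^0,-)$, and pass to the filtered colimit. The paper phrases this as ``two out of three and the proof of Lemma~\ref{lem:GWb_eq}'': for $n\ge 0$ every group appearing in that proof for $\mbb GW$ agrees with the corresponding $GW$-group, so the argument carries over verbatim.

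There is, however, a genuine error in your execution. You assert that Nisnevich descent ``holds for both $GW$ and $\mbb GW$'', and use it to identify $\iHom(W/S^0,C)(X)$ with $\fib(C(\mbb P^1_X)\to C(X))$. This is false: the non-localizing $GW$ does \emph{not} satisfy Nisnevich descent in general, and indeed the whole reason the paper works with the presheaf pushouts $W$ and $W_\sigma$ rather than with $\mbb P^1$ and $\mbb P^\sigma$ is precisely to avoid needing descent for $GW$. So your identification of $\iHom(W/S^0,C)$ with the reduced value on $\mbb P^1$ is unjustified, and the same for $\mbb P^\sigma$.

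The repair is straightforward and keeps your argument intact. Work with the fiber $F$ of $GW^{[m]}\to\mbb GW^{[m]}$ rather than the cofiber; then $\pi_n F(Y)=0$ for all $n\ge 0$ and all $Y$. Since $W/S^0$ and $W_\sigma/S^0$ are finite colimits of pointed representables in presheaves (no suspensions appear), the functors $\iHom(W/S^0,-)$ and $\iHom(W_\sigma/S^0,-)$ are finite limits of evaluations $F\mapsto F(Y)$. A pullback $P=A\times_C B$ of spectra has $\pi_n P$ controlled by $\pi_n A,\pi_n B,\pi_{n+1}C$, so the class of presheaves with $\pi_n=0$ for $n\ge 0$ is closed under these finite limits. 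Thus $\iHom((W/S^0\otimes W_\sigma/S^0)^{\otimes k},F)$ still has $\pi_n=0$ for $n\ge 0$, and your colimit argument goes through. Note also that your ``equivalently'' for the cofiber is not quite right: $\pi_n C=0$ for $n\ge 1$ alone only gives an isomorphism for $n\ge 1$ and surjectivity at $n=0$; using the fiber avoids this bookkeeping.
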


\begin{proof}
This follows from two out of three and the proof of lemma \ref{lem:GWb_eq} since $\pi_nGW^{[m]} \cong \pi_n \mbb GW^{[m]}$
for $n \geq 0$ and for all $m$. 
\end{proof}

\begin{lemma}\label{lem:neg_htpy}
The canonical map $Q_\gamma GW^{[m]} \rightarrow Q_\gamma \mbb GW^{[m]} \cong \mbb
GW$ induces an isomorphism $\pi_n Q_\gamma GW^{[m]} \cong \pi_n \mbb
GW^{[m]}$ for $n <= 0$ and for all $m$.
\end{lemma}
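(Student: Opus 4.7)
The plan is to reduce to the nonnegative-degree case (Lemma \ref{lem:pos_htpy}) by an iterated Bass-delooping argument. Cupping with $\gamma$, which packages the Bass boundary classes $\partial([T])$ and $\partial([T^\sigma])$ of Definition \ref{def:T_Tsigma}, acts as a ``double Bass delooping'' of $GW^{[m]}$; after enough iterations the negative-degree information of $Q_\gamma GW^{[m]}$ is controlled by nonnegative-degree information of $GW^{[m]}$ on auxiliary presheaves, where $GW \simeq \mbb GW$ already.

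Concretely, I would pass to the fiber $F^{[m]} := \fib(GW^{[m]} \to \mbb GW^{[m]})$, which satisfies $\pi_i F^{[m]} = 0$ for $i \ge 0$ since $GW^{[m]} \to \mbb GW^{[m]}$ is an isomorphism on nonnegative homotopy groups. Because $\iHom(-,E)$ preserves fiber sequences in $E$ and filtered colimits commute with finite limits in presheaves of spectra, $Q_\gamma$ preserves fiber sequences. Combining with Lemma \ref{lem:GWb_eq} gives a fiber sequence
\[
Q_\gamma F^{[m]} \to Q_\gamma GW^{[m]} \to \mbb GW^{[m]},
\]
so it suffices to show $\pi_n Q_\gamma F^{[m]} = 0$ for $n \le 0$. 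Unfolding,
\[
\pi_n Q_\gamma F^{[m]}(X) = \colim_k \pi_n \iHom\bigl((W_\sigma/S^0 \otimes W/S^0)^{\otimes k}, F^{[m]}\bigr)(X),
\]
and because $W$ and $W_\sigma$ are \emph{presheaf} pushouts along $\mbb G_m$ and $\mbb G_m^\sigma$ respectively, each $\iHom$ against $W/S^0$ or $W_\sigma/S^0$ produces a Mayer--Vietoris long exact sequence whose boundary map shifts degrees. The plan is to show that, combined with homotopy invariance of $GW^{[m]}$ and $\mbb GW^{[m]}$ on affines, each application of $\iHom(W_\sigma/S^0 \otimes W/S^0, -)$ raises the lower bound on the vanishing range of homotopy groups by two. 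For any fixed $n \le 0$, choosing $k$ so large that $2k + n \ge 1$ then forces the $k$-th term of the colimit to vanish, and the conclusion follows.

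The main obstacle is verifying the claimed ``shift by two per iteration''. Explicitly, one must check: if $\pi_i E(X) = 0$ for all smooth affine $C_2$-schemes $X$ and all $i \ge a$, then $\pi_i \iHom(W/S^0, E)(X) = 0$ for all smooth affine $X$ and $i \ge a-1$, and similarly for $W_\sigma/S^0$. The relevant Mayer--Vietoris sequence involves $\pi_i E(\mbb G_{m,X})$ and $\pi_i E(\mbb A^1_X)$; vanishing of the latter is immediate from homotopy invariance, while vanishing of the former requires that the splittings by $\partial([T])$ and $\partial([T^\sigma])$ persist on the fiber $F^{[m]}$ in a controlled way. This ultimately rests on the Bass exact sequence \cite[Theorem 9.13]{Schder} for $\mbb GW$ together with the range in which $GW$ itself satisfies Bass exactness, so that the failure of Bass exactness for $GW$ is confined to precisely the degrees killed by the iterated shift.
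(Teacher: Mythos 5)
Your reduction to the fiber $F^{[m]} = \fib(GW^{[m]} \to \mbb GW^{[m]})$ is formally clean, and the observation that $Q_\gamma$ preserves fiber sequences and that $\pi_{\ge 0}F^{[m]} = 0$ is correct. But the heart of your argument---the claim that $\pi_n \iHom\bigl((W_\sigma/S^0 \otimes W/S^0)^{\otimes k}, F^{[m]}\bigr)$ vanishes once $2k+n \ge 1$---does not hold in the generality of the lemma. You invoke ``homotopy invariance of $GW^{[m]}$ and $\mbb GW^{[m]}$ on affines'' to kill the terms $\pi_i(\cdot)(\mbb A^1_X)$ in the Mayer--Vietoris sequences and thereby obtain the degree shift, but the lemma is stated over a general Noetherian base $S$ with $\frac{1}{2}\in S$, with no regularity hypothesis, and over such $S$ neither $GW^{[m]}$ nor $\mbb GW^{[m]}$ (nor, a fortiori, $F^{[m]}$) is homotopy invariant. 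Indeed over regular $S$ one has $GW \simeq \mbb GW$ outright, so $F^{[m]}=0$ and the lemma is vacuous; the nontrivial content of the lemma lives entirely in the non-regular regime where homotopy invariance is unavailable.

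The paper's proof deliberately avoids any term-wise vanishing statement. It introduces $F^i_n$ as the \emph{image} of the cup-product map $\gamma^*$ between consecutive stages of the colimit and, by a double induction on $i$ bootstrapped from the nonnegative-degree comparison (Lemma~\ref{lem:pos_htpy}), shows that $F^i_n \cong \mbb GW^{[m]}_n$ for $i \ge -n$. The mechanism that makes this work is the $\mbb GW$-module structure of everything in sight, the Bass exact sequence for $\mbb GW$ from \cite[Theorem 9.13]{Schder}, and the splittings by $\partial([T])$ and $\partial([T^\sigma])$; none of this requires homotopy invariance. Put differently, the colimit computing $\pi_n Q_\gamma F^{[m]}$ vanishes not because the terms eventually vanish, but because the transition maps eventually have trivial image. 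Your last paragraph gestures at this (``the failure of Bass exactness for $GW$ is confined to precisely the degrees killed by the iterated shift'') but leaves it as the ``main obstacle'' unresolved; that obstacle is in fact the whole content of the lemma, and your proposed route to it is blocked by the missing homotopy invariance.
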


\begin{proof}
Because homotopy groups
commute with filtered (homotopy) colimits of spectra
\[
\pi_nQ_\gamma GW^{[m]} = \colim(\pi_n GW \xrightarrow{\alpha} \pi_n\iHom(W/S^0
\otimes W_\sigma/S^0,GW) \xrightarrow{\alpha^{\otimes  2}} \cdots).
\]

Fix $[m]$ for now and denote by $F^i_n$ the image of the map of groups
\[
\gamma^* : GW_n^{[m]}((W/S^0 \otimes W_\sigma/S^0)^{\otimes i})
\rightarrow GW_n^{[m]}((W/S^0 \otimes W_\sigma/S^0)^{\otimes i+1})
\]
and note that $F^0_n \cong GW^{[m]}_n$. Denote by $FB^i_n$ the same
construction as above with $GW$ replaced by $\mbb GW$.

For $i \geq -n$, we claim that there are exact sequences
\begin{align*}
F^i_n(\mbb A^1/1 \otimes W_\sigma/S^0)  \oplus F^i_n(\mbb
  A^1/1 \otimes W_\sigma/S^0)  &\rightarrow  F^i_n(\mbb G_m/1 \otimes W_\sigma/S^0) \\  
  &\xrightarrow{\partial} 
  F^{i}_{n-1}(W/S^0 \otimes W_\sigma/S^0) 
\end{align*}
such that $\partial(F^i_n(\mbb
G_m/1 \otimes W_\sigma/S^0)) = \partial([T]) \cup \partial([T^\sigma])
\cup  F^{i}_{n-1}(W/S^0 \otimes W_\sigma/S^0)$. We prove this in conjunction with the
statement that, for each $n$,  $F^i_n \cong \mbb
GW^{[m]}_n$ for $i \geq -n$. The proof is induction in $i$, and we
must show that $\partial(F^i_n(\mbb
G_m/1 \otimes W_\sigma/S^0)) = \partial([T]) \cup  F^{i}_{n-1}(W/S^0
\otimes W_\sigma/S^0)$. For $n \geq 0$, the same argument that we
gave for $K$-theory together with lemma \ref{lem:pos_htpy} works. In
more detail, there's an exact sequence
\begin{align*}
GW^{[m]}_n(\mbb A^1/1 \otimes W_\sigma/S^0)  \oplus GW^{[m]}_n(\mbb
  A^1/1 \otimes W_\sigma/S^0) &\rightarrow  GW^{[m]}_n(\mbb G_m/1 \otimes W_\sigma/S^0)\\ 
  &\xrightarrow{\partial} 
  GW^{[m]}_{n-1}(W/S^0 \otimes W_\sigma/S^0)
\end{align*}
and because $n \geq 0$, the same argument we gave for $K$-theory above
identifies $\partial( GW^{[m]}_n(\mbb G_m/1 \otimes W_\sigma/S^0))$
with $\partial([T]) \cup \partial([T^\sigma]) \cup
GW^{[m]}_{n-1}(W/S^0\otimes W_\sigma/S^0)$ and in turn
with $\mbb GW^{[m]}(X)$. Then we just use the fact that $p^*$ is
injective and a module map to conclude that $\partial([T]) \cup \partial([T^\sigma]) \cup
p^*(GW^{[m]}_{n-1}(W/S^0\otimes W_\sigma/S^0))$ is isomorphic to $\mbb GW^{[m]}(X)$.

Now fix an $i$, and assume by induction that our claim holds for all
$-n \leq i$. Then there's an exact sequence 
\begin{align*}
\mbb GW^{[m]}_n(\mbb A^1/1 \otimes W_\sigma/S^0)  \oplus
  \mbb GW^{[m]}_n(\mbb
  A^1/1 \otimes W_\sigma/S^0) &\rightarrow  \mbb GW^{[m]}_n(\mbb G_m/1
  \otimes W_\sigma/S^0) \\ &\xrightarrow{\partial}  F^{i}_{n-1}(W/S^0 \otimes W_\sigma/S^0)
\end{align*}
which identifies $\partial(\mbb GW^{[m]}_n(\mbb G_m/1
  \otimes W_\sigma/S^0))$ with $\partial([T]) \cup \partial([T^\sigma])
\cup  F^{i}_{n-1}(W/S^0 \otimes W_\sigma/S^0)$, but we know that
  $\partial(\mbb GW^{[m]}_n(\mbb G_m/1
  \otimes W_\sigma/S^0))$ is equal to $\mbb
GW^{[m]}_{n-1}(W/S^0 \otimes W_\sigma/S^0) \cong \mbb
GW^{[m]}_{n-1}(X)$. Thus, letting $p$ denote the projection $W/S^0
\otimes W_\sigma/S^0 \rightarrow X$ to the basepoint, 
\begin{align*}
\mbb GW^{[m]}_{n-1}(X) &\cong p^*(\partial([T]) \cup \partial([T^\sigma])
\cup  F^{i}_{n-1}(W/S^0 \otimes W_\sigma/S^0)) \\ &= \partial([T]) \cup \partial([T^\sigma])
\cup  p^*(F^{i}_{n-1}(W/S^0 \otimes W_\sigma/S^0)) \\  &= F^{i+1}_{n-1}
\end{align*}
since $p^*$ is split injective.

The meatier part of the argument is producing the exact sequence for
$F^{i+1}_{n-1}$, though the proof is essentially the same as the
proof of the base case. 

First note that for all $i$ and $n$, there's a chain complex
\begin{align*}
F^i_n(\mbb A^1/1 \otimes W_\sigma/S^0)  \oplus F^i_n(\mbb
  A^1/1 \otimes W_\sigma/S^0)  &\rightarrow  F^i_n(\mbb G_m/1 \otimes W_\sigma/S^0) \\ 
  &\xrightarrow{\partial} 
  F^{i}_{n-1}(W/S^0 \otimes W_\sigma/S^0) 
\end{align*}
which is just the image of the usual long exact sequence for $GW$
under the map $\gamma^*$. Depending on $n$, this sequence
may or may not be exact, as the image of an exact sequence is in
general not exact. 

Consider the commutative diagram

\[
\resizebox{\textwidth}{!}{
\xymatrix{F^{i+1}_{n-1}(\mbb A^1/1 \otimes W_\sigma/S^0)  \oplus F^{i+1}_{n-1}(\mbb
  A^1/1 \otimes W_\sigma/S^0)  \ar[r] \ar[d]&  \mbb GW(\mbb A^1/1 \otimes W_\sigma/S^0 \otimes (W/S^0 \otimes
  W_\sigma/S^0)^{\otimes i+2})^{\oplus 2} \ar[d] \\
  F^{i+1}_{n-1}(\mbb G_m/1
  \otimes W_\sigma/S^0)  \ar[r]\ar[d]^-{\partial} & \mbb
GW(\mbb G_m/1 \otimes W_\sigma/S^0 \otimes (W/S^0 \otimes W_\sigma/S^0)^{\otimes i+2}) \ar[d]^-{\partial^B}  \\
\ar[r]^{\phi}
  F^{i+1}_{n-2}((W/S^0 \otimes W_\sigma/S^0)^{\otimes i+2})
   &
 \mbb GW^{[m]}_{n-2}((W/S^0 \otimes W_\sigma/S^0)^{\otimes i+3}) }
}
\]
where the upper two horizontal maps are isomorphisms by what we've
already shown. We claim that the left column is exact. The composite is
zero since it's a chain complex, and if $x \in \ker(\partial)$, then
using the fact that the middle and top maps are isomorphisms we
produce a lift of $x$. 

Now it remains only to
check that the image of $\partial$ coincides with
$\partial([T]) \cup \partial([T^\sigma]) \cup
F^{i+1}_{n-2}$. This is the part of the proof we adapt from the
$K$-theory case. First, it's clear that $\partial([T]) \cup \partial([T^\sigma]) \cup
F^{i+1}_{n-2} \subseteq \im(\partial)$, since $\partial([T])$
restricts to zero in $\mbb A^1$. For the other containment, by
exactness and the fact that the left two vertical arrows are
isomorphisms, we know that $\im(\partial) \cong
\im(\partial^B)$. Now since $\partial([T]) \cup \partial([T^\sigma]) \cup
p^*(F^{i+1}_{n-2}) \subseteq \im(\partial)$, it is isomorphic to its
image in $\mbb GW^{[m]}_{n-2}((W/S^0 \otimes W_\sigma/S^0)^{\otimes
  i+2})$. But $\phi$ is a map of modules, so that 
\[
\phi(\partial([T]) \cup \partial([T^\sigma]) \cup
F^{i+1}_{n-2}((W/S^0 \otimes W_\sigma/S^0)^{\otimes i+2})) \cong \partial([T]) \cup \partial([T^\sigma]) \cup \im(\phi)
\] 

But $\phi$ is necessarily surjective, and cup product with
$ \partial([T]) \cup \partial([T^\sigma])$ is an automorphism of $\mbb
GW$. It follows that 
\[
\partial([T]) \cup \partial([T^\sigma]) \cup
F^{i+1}_{n-2}((W/S^0 \otimes W_\sigma/S^0)^{\otimes i+2}) \cong \im
(\phi) = \im(\partial^B) \cong \im(\partial)
\]
so that $\partial([T]) \cup \partial([T^\sigma]) \cup
F^{i+1}_{n-2}((W/S^0 \otimes W_\sigma/S^0)^{\otimes i+2}) =
\im(\partial)$. 

We've shown that if the inductive statement holds for $i,n$,
then it holds for $i+1,n-1$. The fact that it holds for $i+1,m$ for
any $m < n+1$ is
clear by appealing to results for $\mbb GW$. Now, the lemma follows from the explicit description for filtered
colimits of groups.

\end{proof}

\begin{corollary}\label{cor:GW_per}
Let $\gamma$ be the map \eqref{eq:gamma}. Then there are weak
equivalences of presheaves of spectra 
\[
 Q_\gamma GW \xrightarrow{\sim} Q_\gamma \mbb GW \simeq \mbb GW.
\]
\end{corollary}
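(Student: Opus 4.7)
The proof is essentially an assembly of the three preceding lemmas. The canonical natural transformation $GW \to \mbb GW$ induces a map $Q_\gamma GW \to Q_\gamma \mbb GW$ of periodizations, and composing with the equivalence $Q_\gamma \mbb GW \xrightarrow{\sim} \mbb GW$ produced in Lemma \ref{lem:GWb_eq} yields the two maps in the statement. My plan is therefore to verify the first arrow is a weak equivalence of presheaves of spectra, and then invoke Lemma \ref{lem:GWb_eq} for the second.

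To check that $Q_\gamma GW \to Q_\gamma \mbb GW$ is a sectionwise weak equivalence, I would evaluate at a fixed $X \in \Sch{S,qp}^{C_2}$ and verify that it induces an isomorphism $\pi_n Q_\gamma GW(X) \to \pi_n Q_\gamma \mbb GW(X)$ for every integer $n$. This splits cleanly by the sign of $n$: for $n \geq 0$, the isomorphism is exactly the content of Lemma \ref{lem:pos_htpy}, and for $n \leq 0$ it is the content of Lemma \ref{lem:neg_htpy}. Since these two ranges together cover all integers, the map is a stable equivalence on sections, and hence a weak equivalence of presheaves of spectra.

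Composing with the equivalence $Q_\gamma \mbb GW \simeq \mbb GW$ of Lemma \ref{lem:GWb_eq} completes the proof. Since this corollary is a purely formal assembly of results already established in this section, I do not anticipate any genuine obstacle; the only minor point to be careful about is the observation that the canonical map $GW \to \mbb GW$ respects the module structure over $GW^{[0]}_*$ used to define the Bott-type element $\gamma$, so that the induced comparison of periodizations is well defined. This was already implicit in Lemmas \ref{lem:pos_htpy} and \ref{lem:neg_htpy}, so no extra verification is required.
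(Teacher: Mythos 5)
Your proposal is correct and follows essentially the same path as the paper: assemble Lemma \ref{lem:pos_htpy} (for $n \geq 0$) and Lemma \ref{lem:neg_htpy} (for $n \leq 0$) to get the stable equivalence $Q_\gamma GW \to Q_\gamma \mbb GW$, then invoke Lemma \ref{lem:GWb_eq} for $Q_\gamma \mbb GW \simeq \mbb GW$. The additional observation about $GW^{[0]}_*$-module compatibility is a fair remark but, as you note, is already baked into the two homotopy-group lemmas.
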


\begin{proof}
Combining Lemma \ref{lem:pos_htpy} and Lemma \ref{lem:neg_htpy} we see
that $Q_\gamma GW \rightarrow Q_\gamma \mbb GW$ induces an
isomorphism on stable homotopy groups. Lemma \ref{lem:GWb_eq} shows that $Q_\gamma \mbb GW \simeq \mbb GW$.
\end{proof}

Recall the definition of $\beta^{1+\sigma}$ from equation
\eqref{eq:bott_element}.

\begin{definition}
A $GW$-module $E$ is called \emph{Bott periodic} if the map
\[
\iHom(\beta^{1+\sigma},E): E
\rightarrow \iHom((\mbb P^1,[1:1])\wedge (\mbb P^\sigma,[1:1]),E)
\]
is an equivalence.
\end{definition}

Let $\mbb A^-$ and $\mbb G_m^-$ denote $\mbb A^1$ and $\mbb G_m$ with the sign
action $x \mapsto -x$. There are zigzags
\[
\mbb A^1/\mbb G_m \hookrightarrow \mbb P^1/(\mbb P^1 - [-1:1])
\twoheadleftarrow \mbb P^1/[1:1]
\]
and 
\[
\mbb A^-/\mbb G_m^- \hookrightarrow \mbb P^\sigma/(\mbb P^\sigma - [-1:1])
\twoheadleftarrow \mbb P^\sigma/[1:1].
\]

The maps $\beta :  \mbb P^1/[1:1] \rightarrow GW^{[1]}$ and
$\beta^\sigma : \mbb P^\sigma/[1:1] \rightarrow GW^{[-1]}$ lift to
$\mbb P^1/(\mbb P^1 - [-1:1])$ and $\mbb P^\sigma/(\mbb P^\sigma -
[-1:1])$ respectively by results analogous to Lemma
\ref{lem:bott_zero}, and hence there are induced maps 
\[
\beta' : \mbb A^1/\mbb G_m \rightarrow GW^{[1]}
\]
\[
(\beta^\sigma)' : \mbb A^-/\mbb G_m^- \rightarrow GW^{[-1]}.
\]

Taking smash products and using that $\mbb A^1 \oplus \mbb A^- \cong
\mbb A^\rho$, we get a map
\begin{equation}\label{eq:beta_beta_sigma_prime}
\beta' \otimes (\beta^\sigma)' : \mbb
A^\rho/(\mbb A^\rho - 0) \rightarrow GW^{[1]} \otimes GW^{[-1]}
\rightarrow GW.
\end{equation}

When working over a scheme other than the base scheme $S$, we'll let 
$\beta'_X$ and $(\beta^\sigma)'_X$ denote the analogous constructions with 
$\mbb A^1$ and $\mbb G_m$ replaced by $\mbb A^1_X$ and $(\mbb G_m)_X$.
For a vector bundle $E$, let $\mbb V_0(E)$ denote $E/(E-0)$, the
quotient by the complement of the zero section.

\begin{theorem}\label{thm:GW_Einfty}
Let $S$ be a Noetherian scheme of finite Krull dimension with an ample
family of line bundles and $\frac{1}{2} \in S$. Then $L_{\mbb A^1}\mbb GW$ lifts to an $E_\infty$ motivic spectrum, denoted
$\KR_S$, over $\Sm{S,qp}^{C_2}$.
\end{theorem}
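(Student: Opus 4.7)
The plan is to upgrade $\mathbb{GW}$ to an $E_\infty$ presheaf of spectra using the periodization description of Corollary \ref{cor:GW_per}, and then verify that the resulting motivic localization is $T^\rho$-periodic via the Bott map $\beta' \otimes (\beta^\sigma)'$ from \eqref{eq:beta_beta_sigma_prime}, so that the $E_\infty$ structure descends to $\SH^{C_2}(S)$.

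First I would recall from \cite[Section 4]{Schder} that the $GW$ construction on dg categories with duality assembles into a lax symmetric monoidal functor, so the presheaf $X \mapsto GW(\sPerf(X))$ takes values in $E_\infty$ ring spectra. The map $\gamma : W_\sigma/S^0 \otimes W/S^0 \to GW$ is a morphism into $GW$ viewed as the unit of its own module category, and the periodization construction $Q_\gamma E = \colim_n \iHom((W_\sigma/S^0 \otimes W/S^0)^{\otimes n}, E)$ is the universal $\gamma$-inverting localization in the presentably symmetric monoidal $\infty$-category of presheaves of $GW$-module spectra. As such it is smashing and symmetric monoidal, so $Q_\gamma GW$ inherits an $E_\infty$ ring structure. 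By Corollary \ref{cor:GW_per}, $Q_\gamma GW \simeq \mathbb{GW}$, so $\mathbb{GW}$ is an $E_\infty$ presheaf of ring spectra.

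Next I would apply the motivic localization $L_{\mathrm{mot}} = L_{\mathrm{htp}} \circ L_{\mathrm{Nis}}$ (iterated) from Section \ref{chap:Eq_Top}. Since equivariant Nisnevich localization and $\mathbb A^1$-localization are both monoidal localizations on the presheaf category, $L_{\mbb A^1}\mbb GW$ acquires an $E_\infty$ ring structure in the symmetric monoidal $\infty$-category $\HH_\bullet^{C_2}(S)$. To promote this to $\SH^{C_2}(S) = \HH_\bullet^{C_2}(S)[(T^\rho_S)^{-1}]$, it suffices to observe that $L_{\mbb A^1}\mbb GW$ is already $T^\rho$-periodic, i.e. that smashing with $T^\rho$ acts invertibly on it. The Bott map $\beta' \otimes (\beta^\sigma)' : \mbb V_0(\mbb A^\rho) \to GW$ of \eqref{eq:beta_beta_sigma_prime} provides a candidate inverse after motivic localization, and $T^\rho \simeq \mbb V_0(\mbb A^\rho)$ in $\HH_\bullet^{C_2}(S)$.

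The hard part will be matching the two periodization data, since the periodization in Corollary \ref{cor:GW_per} is phrased in terms of $\gamma : W_\sigma/S^0 \otimes W/S^0 \to GW$ whereas $T^\rho$-stability in $\SH^{C_2}(S)$ needs invertibility of $\beta^{1+\sigma}$ (or equivalently $\beta' \otimes (\beta^\sigma)'$). After motivic localization, the zigzags
\[
\mbb A^1/\mbb G_m \hookrightarrow \mbb P^1/(\mbb P^1-[-1{:}1]) \twoheadleftarrow \mbb P^1/[1{:}1], \qquad \mbb A^-/\mbb G_m^- \hookrightarrow \mbb P^\sigma/(\mbb P^\sigma-[-1{:}1]) \twoheadleftarrow \mbb P^\sigma/[1{:}1]
\]
displayed before \eqref{eq:beta_beta_sigma_prime} become equivalences (the right-hand maps are motivic equivalences by Nisnevich excision applied to the cover by $\mbb P^1 -[-1{:}1]$ and a neighborhood of $[-1{:}1]$, and the left-hand maps are motivic equivalences by homotopy purity), and they identify the Bott element $\beta^{1+\sigma}$ with $\gamma$ up to the identifications $W/S^0 \simeq \mbb P^1/[1{:}1]$ and $W_\sigma/S^0 \simeq \mbb P^\sigma/[1{:}1]$ coming from equivariant Nisnevich descent. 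Once this identification is in place, Corollary \ref{cor:GW_per} combined with Lemma \ref{lem:bott_zero} (to see that $\beta^\sigma$, and hence $\beta^{1+\sigma}$, vanishes on the complement and therefore factors through the Thom quotient) shows that $\beta' \otimes (\beta^\sigma)'$ inverts smashing with $T^\rho$ after motivic localization. Hence $L_{\mbb A^1}\mbb GW$ is $T^\rho$-periodic, and its $E_\infty$ structure lifts canonically to the symmetric monoidal $\infty$-category $\SH^{C_2}(S)$; we denote the resulting object by $\KR_S$.
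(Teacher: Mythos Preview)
Your approach is essentially the paper's: start from Schlichting's $E_\infty$ structure on $GW$, pass to $\mbb GW$ via periodization, apply the monoidal localization $L_{\mbb A^1}$, and then lift to $\SH^{C_2}(S)$ using Bott/$T^\rho$-periodicity (the paper packages the last step as \cite[Proposition~3.2]{cdhdesc}, but your direct argument that a $T^\rho$-periodic $E_\infty$ object in $\Sp(\HH^{C_2}(S))$ lifts canonically amounts to the same thing).

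There is one genuine soft spot. You assert that $Q_\gamma$ ``is the universal $\gamma$-inverting localization \ldots\ as such it is smashing and symmetric monoidal.'' The paper explicitly warns just before Lemma~\ref{lem:GWb_eq} that $Q_\alpha$ is \emph{not} in general the periodization: the sequential colimit only models the symmetric monoidal inversion when the cyclic permutation of $\alpha^{\otimes 3}$ is homotopic to the identity (or an equivalent symmetry condition holds). The paper handles this by invoking \cite[Lemma~3.3]{cdhdesc} together with Corollary~\ref{cor:GW_per} to conclude that $\mbb GW$ really is the periodization of $GW$ with respect to $\gamma$, and separately proves Lemma~\ref{lem:bott_3symm} (3-symmetry of the $\mbb A^1$-local Bott element) for the $T^\rho$ step. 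Your argument needs one of these ingredients: either cite the 3-symmetry, or note that after Nisnevich localization $W/S^0 \otimes W_\sigma/S^0$ becomes $\mbb P^1 \wedge \mbb P^\sigma \simeq T^\rho$, which is invertible in $\SH^{C_2}(S)$ and hence symmetric there. Without this, the passage from ``$Q_\gamma GW \simeq \mbb GW$'' to ``$\mbb GW$ inherits an $E_\infty$ structure'' is not justified.
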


\begin{proof}
$GW$ is an $E_\infty$ object in presheaves of spectra (it's a
commutative monoid in the category of presheaves of symmetric spectra) on
$\Sch{S,qp}^{C_2}$ via the cup product defined in \cite[Remark 5.1]{Schder}. By
\cite{cdhdesc} Lemma 3.3, together with corollary \ref{cor:GW_per}
above, $\mbb GW$ is the periodization of $GW$ with respect to
$\gamma$. Let $T^\rho$ denote the Thom space of the regular
representation $\mbb A^\rho$. Now $\mbb GW$ is Nisnevich excisive, so that $\mbb GW(W/S^0
\otimes W_\sigma/S^0) \cong \mbb GW(\mbb P^1 \wedge \mbb P^\sigma)$, and $\mbb GW$ is
$\gamma$ periodic if and only if it is Bott periodic. Because $L_{\mbb A^1}$ preserves 
Nisnevich sheaves of spectra and $E_\infty$-objects, $L_{\mbb A^1}\mbb GW$ is an $E_\infty$ object in the
category of homotopy invariant Nisnevich 
sheaves of spectra. In the notation of Hoyois, let $L_{\mbb A^1}\mbb GW_X$ denote the restriction of 
$L_{\mbb A^1}\mbb GW$ to $X \in \Sch{S,qp}^{C_2}$. Then $L_{\mbb A^1}\mbb GW_X \in \Sp(\HH^{C_2}(X))$, where 
$\Sp(\HH^{C_2}(X))$ is the $\infty$-category of homotopy invariant Nisnevich sheaves of spectra
on $\Sm{X,qp}^{C_2}$. Let $(L_{\mbb A^1}\mbb GW_X)_{mod}$ denote the category of modules over 
$L_{\mbb A^1}\mbb GW_X$ in $\Sp(\HH^{C_2}(X))$. By \cite{cdhdesc}, proposition 3.2, $L_{\mbb A^1}\mbb GW$ lifts to an
$E_\infty$ object $\KR_X$ in $(L_{\mbb A^1}\mbb GW_X)_{mod}[(T^\rho)^{-1}]$ and by 
forgetting the module structure an $E_\infty$-algebra in $\SH^{C_2}(X)$. 
\end{proof}

\begin{lemma}\label{lem:bott_3symm}
The $\mbb A^1$-localization of the Bott element $L_{\mbb A^1}(\beta'_X
\otimes (\beta^\sigma)'_X) : L_{\mbb A^1} \mbb V_0(\mbb A^\rho)
\rightarrow L_{\mbb A^1} GW_X$, viewed as an element of $\Sp(\mc P_{\mbb
  A^1}(\Sch{S,qp}^{C_2}))_{/L_{\mbb A^1}GW_X}$ is 3-symmetric.
\end{lemma}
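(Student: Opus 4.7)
The plan is to reduce the 3-symmetry condition to a purely geometric statement: it suffices to exhibit an equivariant $\mbb A^1$-homotopy from the cyclic permutation on $\mbb V_0(\mbb A^\rho)^{\otimes 3}$ to the identity in $\HH_\bullet^{C_2}(X)$. Any such homotopy, pushed forward along the triple Bott element $(\beta'_X \otimes (\beta^\sigma)'_X)^{\otimes 3}$, produces the required homotopy in the slice category $\Sp(\mc P_{\mbb A^1}(\Sch{S,qp}^{C_2}))_{/L_{\mbb A^1}GW_X}$. This is the same reduction used by Hoyois in \cite{cdhdesc} for the $\KGL$ case.

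To construct the homotopy, first identify $\mbb V_0(\mbb A^\rho)^{\otimes 3} \simeq \mbb V_0(\mbb A^{3\rho})$ where $\mbb A^{3\rho} = (\mbb A^\rho)^{\oplus 3}$. The cyclic permutation on the triple smash corresponds under this identification to the equivariant linear automorphism $c$ of $\mbb A^{3\rho}$ permuting the three summands, which commutes with the diagonal $C_2$-action because the three summands are identical as $C_2$-representations. Viewed as a $3 \times 3$ matrix with entries in $\mathrm{End}_{C_2}(\mbb A^\rho)$, the automorphism $c$ has determinant $1$, hence lies in $SL_3(\mathrm{End}_{C_2}(\mbb A^\rho))$. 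By Whitehead's lemma, $c$ factors as a product of elementary transvections $I + r E_{ij}$ with $r \in \mathrm{End}_{C_2}(\mbb A^\rho)$. Each such transvection is joined to the identity by the evident equivariant $\mbb A^1$-homotopy $s \mapsto I + s r E_{ij}$, where $\mbb A^1$ is given the trivial $C_2$-action. Since every transvection is invertible it preserves the zero section, so concatenating these homotopies yields a single equivariant $\mbb A^1$-homotopy from $\mathrm{id}$ to $c$ on $\mbb V_0(\mbb A^{3\rho})$.

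The main obstacle is confirming that the Whitehead decomposition can be carried out with coefficients inside the equivariant endomorphism ring $\mathrm{End}_{C_2}(\mbb A^\rho)$ rather than in a larger non-equivariant one. In our situation this is automatic because $c$ is built from block permutations of identical copies of $\mbb A^\rho$, so every step of the factorization stays inside the equivariant endomorphism ring. The remaining verifications, namely that $L_{\mbb A^1}$ and the passage to the slice preserve the homotopy, are formal and mirror the non-equivariant argument in \cite{cdhdesc}; the only genuinely new ingredient is the observation that the block transvections above respect the diagonal $C_2$-action.
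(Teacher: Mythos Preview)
Your construction of the $\mbb A^1$-homotopy between the cyclic permutation and the identity on $\mbb V_0(\mbb A^{3\rho})$ via elementary transvections in $SL_3(\mathrm{End}_{C_2}(\mbb A^\rho))$ is fine, and is arguably tidier about equivariance than the paper's appeal to $SL_6(\Z)$. The gap is in your first paragraph: it is \emph{not} true that an arbitrary equivariant $\mbb A^1$-homotopy $c \simeq \mathrm{id}$ on $\mbb V_0(\mbb A^{3\rho})$ automatically lifts to a homotopy in the slice over $L_{\mbb A^1}GW_X$. Post-composing such a homotopy with $(\beta'_X \otimes (\beta^\sigma)'_X)^{\otimes 3}$ yields only a path in $\mathrm{Map}(\mbb V_0(\mbb A^{3\rho}), GW_X)$, which is a loop at the triple Bott element rather than a witness that $c$ and $\mathrm{id}$ agree as endomorphisms of the object $(\mbb V_0(\mbb A^{3\rho}), \beta^{\otimes 3})$ in the slice. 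You still owe the filling $2$-cell.

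The paper (following \cite{cdhdesc}) supplies that $2$-cell explicitly, and this is the non-formal step you have swept into ``the remaining verifications are formal.'' The point is that your homotopy is not arbitrary: it is an $\mbb A^1$-family of \emph{linear} automorphisms of the bundle $p^*(\mbb A^{3\rho})$ over $\mbb A^1 \times X$, and the Bott element $\beta'\otimes(\beta^\sigma)'$ is natural with respect to bundle automorphisms because it is built from the tautological line bundle on a projectivization. Concretely, for any automorphism $\phi$ of $p^*(\mbb A^{3\rho})$ one obtains a commuting triangle
\[
\xymatrix{\mbb V_0(p^*(\mbb A^{3\rho})) \ar[rr]^{\mbb V_0(\phi)} \ar[dr]_{\beta'_{p^*(\mbb A^{3\rho})}} && \mbb V_0(p^*(\mbb A^{3\rho})) \ar[dl]^{\beta'_{p^*(\mbb A^{3\rho})}} \\ & L_{\mbb A^1}GW_{\mbb A^1 \times X} &}
\]
and adjointing back to $X$ turns this into the desired homotopy over $L_{\mbb A^1}GW_X$. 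Once you insert this naturality argument, your proof and the paper's coincide.
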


\begin{proof}
The proof is identical to Lemma 4.8 in \cite{cdhdesc}. The main idea
is that the identity and the cyclic permutation $\sigma_3$ are both
induced by matrices in $SL_{3\cdot 2}(\Z)$ acting on $\mbb A^{3\rho}$,
and any two such matrices are (na\"ively) $\mbb A^1$-homotopic so
that there's a map $h: \mbb A^1 \times \mbb A^{3\rho} \rightarrow \mbb
A^{3\rho}$ witnessing the homotopy. We can extend this to a map
\[
\phi: \mbb A^1 \times \mbb A^{3\rho} \xrightarrow{\pi_1 \times h} \mbb A^1 \times \mbb A^{3\rho}.
\]
Letting $p : \mbb A^1 \times S \rightarrow S$ denote the projection, $\phi$
 is an automorphism of the vector bundle $p^*(\mbb
A^{3\rho})$.

Now we claim that the automorphisms $\phi_0,\phi_1$ of $\mbb
V_0(\mbb A^{3\rho})$ induced by the restrictions of $\phi$ to 1 and 0 are $\mbb
A^1$-homotopic over $L_{\mbb A^1}GW$. Let $\beta'_{\mbb A^{3\rho}}$ denote 
$(\beta'_X \otimes (\beta^\sigma)'_X)^{\otimes 3}$. Let $\beta'_{\mbb A^{3\rho}}$ denote 
$(\beta'_{\mbb A^1 \times X} \otimes (\beta^\sigma)'_{\mbb A^1 \times X})^{\otimes 3}$.

To prove the claim, any automorphism
$\phi$ as above induces a commutative triangle 
\[
\xymatrix{\mbb V_0(p^*(\mbb A^{3\rho})) \ar[rr]^\phi \ar[dr]_{\beta'_{p^*(\mbb A^{3\rho})}} & & \mbb V_0(p^*(\mbb A^{3\rho}))
  \ar[dl]^{\beta'_{p^*(\mbb A^{3\rho})}} \\ & L_{\mbb A^1}GW_{\mbb A^1 \times X}}
\]
or presheaves of spectra on $\Sch{\mbb A^1 \times X}^{C_2}$. As in \cite{cdhdesc}, the diagram 
ultimately comes from our construction of the Bott elements via the projective bundle formula and the functoriality 
of the Proj(Sym - ) construction with respect to automorpshims of the underlying vector bundle 
(in particular the fact that there  
are induced isomorphisms on each twisting sheaf $\mc O(d)$). By
adjunction, this is equivalent to a triangle
\[
\xymatrix{\mbb A^1_+ \otimes \mbb V_0(\mbb A^{3\rho}) \ar[rr] \ar[dr]_{\beta'_{\mbb A^{3\rho}}} && \mbb V_0(\mbb A^{3\rho})
  \ar[dl]^{\beta'_{\mbb A^{3\rho}}} \\ & L_{\mbb A^1}GW_X}
\]
which is an $\mbb A^1$-homotopy between $\phi_0$ and $\phi_1$ over
$L_{\mbb A^1}GW$ as desired.
\end{proof}

We've shown that $\mbb GW$ is Bott periodic and
Nisnevich excisive. Since it's the $\gamma$ periodization of $GW$ and $\gamma$
periodicity is equivalent to Bott periodicity for Nisnevich excisive sheaves, $\mbb GW$ is
in fact the reflection of $GW$
in the subcategory of Nisnevich excisive and Bott periodic
$GW$-modules. Thus by definition, $L_{\mbb A^1}\mbb
GW$ is the reflection of $GW$ in the subcategory of homotopy
invariant, Nisnevich excisive, and Bott periodic $GW$-modules. 

\begin{corollary}\label{cor:MotLoc_GW}
The canonical map $GW \rightarrow Q_{\beta}L_{\mathrm{mot}}GW$ is the universal
map to a homotopy invariant, Nisnevich excisive, and Bott periodic
$GW$-module. In particular
\[
L_{\mbb A^1}\mbb GW = Q_{\beta}L_{\mathrm{mot}}GW
\]
\end{corollary}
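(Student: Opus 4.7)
The plan is to combine the identification in the paragraph preceding the corollary---that $L_{\mbb A^1}\mbb GW$ is the reflection of $GW$ in the full subcategory $\mc C$ of $GW$-modules that are simultaneously homotopy invariant, Nisnevich excisive, and Bott periodic---with a verification that $Q_{\beta}L_{\mathrm{mot}}GW$ corepresents the same reflection. If both claims hold, the corollary follows formally from uniqueness of reflections. Here $\beta$ refers to the map $\beta^{1+\sigma}$ of \eqref{eq:bott_element} (equivalently, via \eqref{eq:beta_beta_sigma_prime}, to the $\mbb A^\rho$-Thom version used in Lemma \ref{lem:bott_3symm}).

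First I would check that $Q_{\beta}L_{\mathrm{mot}}GW$ lies in $\mc C$. By construction $L_{\mathrm{mot}}GW$ is homotopy invariant and Nisnevich excisive, and the functor $Q_\beta$ is a filtered colimit of internal-hom functors $\iHom(((\mbb P^1,[1:1])\wedge(\mbb P^\sigma,[1:1]))^{\otimes n},-)$; both operations preserve homotopy invariance and Nisnevich excision in the category of $GW$-modules, so $Q_{\beta}L_{\mathrm{mot}}GW$ inherits those two properties. For Bott periodicity, Lemma \ref{lem:bott_3symm} supplies the $3$-symmetry of $\beta$ needed to apply the formalism of \cite{cdhdesc} (Proposition 3.2 and Lemma 3.3 there), which identifies $Q_\beta$ with the reflection into Bott-periodic $GW$-modules; hence $Q_{\beta}L_{\mathrm{mot}}GW$ is Bott periodic.

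Next I would verify the universal property. Given any $E\in\mc C$ and a $GW$-module map $GW\to E$, homotopy invariance and Nisnevich excision of $E$ produce a unique factorization $GW\to L_{\mathrm{mot}}GW\to E$; Bott periodicity of $E$ gives $E\simeq Q_\beta E$, and functoriality of $Q_\beta$ then yields a unique further factorization $Q_{\beta}L_{\mathrm{mot}}GW\to Q_\beta E\simeq E$. This, combined with the first step, identifies $Q_{\beta}L_{\mathrm{mot}}GW$ with the reflection of $GW$ in $\mc C$, which by the paragraph preceding the corollary is $L_{\mbb A^1}\mbb GW$. The main obstacle is the bookkeeping in the previous step: we must know that $Q_\beta$, as defined by a sequential colimit, genuinely implements Bott-periodization (and not just a cruder invert-and-hope construction), and that it commutes with $L_{\mathrm{mot}}$ up to the preservation of homotopy invariance and Nisnevich excision. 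Both issues are resolved by the $3$-symmetry of $\beta$ together with the fact that internal homs and filtered colimits preserve the relevant localizing subcategories, as spelled out in \cite{cdhdesc}.
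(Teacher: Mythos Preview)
Your proposal is correct and follows essentially the same approach as the paper: the paper's proof simply invokes Lemma~\ref{lem:bott_3symm} and then says the argument is identical to \cite[Proposition~4.9]{cdhdesc}, and what you have written is precisely an unpacking of that proposition (showing $Q_\beta L_{\mathrm{mot}}GW$ lies in the target subcategory via preservation of motivic local objects under $\iHom$ and filtered colimits, and verifying the universal property by factoring through $L_{\mathrm{mot}}$ and then through $Q_\beta$, the latter requiring the $3$-symmetry to identify $Q_\beta$ with genuine Bott-periodization). Your parenthetical identifying $\beta^{1+\sigma}$ with the Thom-space element after $L_{\mathrm{mot}}$ is exactly the bookkeeping needed to transport Lemma~\ref{lem:bott_3symm} into this setting.
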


\begin{proof}
Given Lemma \ref{lem:bott_3symm}, the proof is identical to
Proposition 4.9 in \cite{cdhdesc}.
\end{proof}

Replacing $GW$ with its connective cover $GW_{\geq 0}$, the same
reasoning yields:
\begin{porism}\label{por:GW_per}
The canonical map $GW_{\geq 0} \rightarrow
Q_{\beta}L_{\mathrm{mot}}GW_{\geq 0}$ is the universal
map to a homotopy invariant, Nisnevich excisive, and Bott periodic
$GW_{\geq 0}$-module. In particular
\[
L_{\mbb A^1}\mbb GW = Q_{\beta}L_{\mathrm{mot}}GW_{\geq 0}
\]
\end{porism}
\begin{proof}
  In short, the reason the result extends to the connective cover $GW_{\geq 0}$ is that we 
  have at no point used the negative homotopy groups of $GW$ in our arguments. We'll spell this out 
  more explicitly now. 

  The connective cover construction is monoidal, and the canonical map $GW^{[m]}_{\geq 0}
  \rightarrow GW^{[m]}$ is a ring map. The Bott elements $\beta$ and $\beta^\sigma$
  live in the zeroth homotopy groups of $GW^{[1]}(\mbb P^1)$ and $GW^{[1]}(\mbb P^\sigma, -\can)$.
  It follows that these Bott elements restrict to well defined elements in the zeroth homotopy groups of 
  $GW^{[1]}_{\geq 0}(\mbb P^1)$ and $GW^{[1]}_{\geq 0}(\mbb P^\sigma, -\can)$. The definition 
  of the map $\gamma$ in \eqref{eq:gamma} extends without modification to $GW_{\geq 0}$, as 
  all the elements involved in the discussion prior to \eqref{eq:gamma} were in the non-negative 
  homotopy groups of $GW$.

  In particular, there's a canonical map 
  $GW^{[m]}_{\geq 0} \rightarrow \mbb GW^{[m]}$ which exhibits $\mbb GW$ as a $GW_{\geq 0}$ module 
  and is an isomorphism on non-negative homotopy 
  groups, and Lemma \ref{lem:pos_htpy} remains true replacing $GW$ with $GW_{\geq 0}$.
  Lemma \ref{lem:neg_htpy}, which is just an analogue of the Bass construction,
   is an inductive argument which at no point uses any facts about the negative homotopy groups 
   of $GW$. The exact sequence involving $GW$ in the proof of Lemma \ref{lem:neg_htpy} is just 
   a formal consequence of the definition of $W$ and $W_\sigma$ (more precisely, that they're 
   pushouts of presheaves of spectra), and remains exact replacing 
   $GW$ with $GW_{\geq 0}$. Finally, the proof of Lemma \ref{lem:bott_3symm} holds without 
   modification when we replace each instance of $GW$ by $GW_{\geq 0}$.
\end{proof}

\section{cdh Descent for Homotopy Hermitian $K$-theory}\label{chap:cdh}
Recall from Definition \ref{def:cdh_top} that the cdh topology is the
topology generated by the Nisnevich and abstract blow-up squares. Fix
a Noetherian scheme of finite Krull dimension $S$, and a scheme $X$
 over $S$.

Let $\HH^{C_2}(S)$ denote the motivic $\infty$-category on
$\Sm{S,qp}^{C_2}$. Just as in \cite{cdhdesc} section 5, we let $\BigHH$ and
$\BigSH$ denote the ``big'' versions of $\HH^{C_2}$ and $\SH^{C_2}$: they can be identified with 
the $\infty$-categories of sections of $\Sp(\HH^{C_2}(-))$ and $\SH^{C_2}(-)$ over $\Sch{S,qp}^{C_2}$
that are cocartesian over smooth morphisms. By the results of the previous section, 
homotopy Hermitian $K$-theory,
$L_{\mbb A^1}\mbb GW$, is a Bott-periodic $E_\infty$-algebra in $\Sp(\BigHH)$, and thus
by \cite[Proposition 3.2]{cdhdesc}, there is a unique Bott periodic $E_\infty$-algebra  
$\BigKR$ in $\BigSH$ such that $\Omega^\infty \BigKR \simeq L_{\mbb A^1}\mbb GW$.
More explicitly, by Porism \ref{por:GW_per}, we can write $\BigKR$ as the image under the localization functor
\[
QL_{\mathrm{mot}} : \mathrm{Stab}^{\mathrm{lax}}_{T^\rho} \Sp(\mc
P(\Sch{S,qp}^{C_2})) \rightarrow \mathrm{Stab}_{T^\rho} \Sp(\BigHH) \simeq \BigSH
\]
of the ``constant'' $T^\rho$-spectrum $c_{\beta' \otimes (\beta^\sigma)'}GW_{\geq 0}$, where the maps
$T^\rho \wedge GW_{\geq 0} \rightarrow GW_{\geq 0}$ are induced by adjunction after
applying $\iHom_{GW_{\geq 0}-mod}(-,GW_{\geq 0})$ to the map 
\[
\beta' \otimes (\beta^\sigma)' : T^\rho \rightarrow GW_{\geq 0}
\]
with $\beta' \otimes (\beta^\sigma)'$ the map \eqref{eq:beta_beta_sigma_prime} restricted 
to the connective cover.
\begin{definition}
For $X \in \Sch{S,qp}^{C_2}$, let $\KR_X$ denote the restriction of $\BigKR$ to $\Sm{X,qp}^{C_2}$. Note that 
this agrees with the notation of Theorem \ref{thm:GW_Einfty}.
\end{definition}

We want to show that $L_{\mbb A^1}\mbb GW$ is a cdh sheaf on $\Sch{S,qp}^{C_2}$. By first
checking that the formalism of six operations holds in equivariant
motivic homotopy theory and following the same recipe as the
$K$-theory case, \cite[Corollary 6.25]{HoyoisSixOp} proves that it suffices to show that for
each $f : D \rightarrow X \in \Sch{S,qp}^{C_2}$, the restriction map
\[
f^*(\KR_X) \rightarrow \KR_D
\]
in $\SH^{C_2}(D)$ is an equivalence. We show this now.

By \cite[Appendix A]{Schder}, there's a map 
\[
\mathrm{Herm}(X)^+ \rightarrow \Omega^\infty GW(X)
\]
where $\mathrm{Herm}(X)$ is the $E_\infty$ space of non-degenerate Hermitian vector
bundles over $X$ and $(-)^+$ denotes group completion. If $X$ is an
affine $C_2$-scheme, the category of vector bundles is a
split exact category with duality, and the above map is an
equivalence. It follows that
\[
\mathrm{Herm}^+ \rightarrow \Omega^\infty GW|\Sm{X,qp}^{C_2}
\]
is a motivic equivalence in $\mc P(\Sm{X,qp}^{C_2})$. 

Just as in \cite{cdhdesc} we note that
\[
\coprod_{n \geq 0} B_{isoEt}O(\langle 1 \rangle^{\perp n}) \rightarrow \mathrm{Herm}
\]
exhibits $\mathrm{Herm}$ as the equivariant Zariski sheafification of
the subgroupoid of non-degenerate Hermitian vector bundles of constant
rank (in other words, it ``corrects" the sections over non-connected or
hyperbolic rings). Since $L_{\mathrm{Zar}}$ preserves finite products,
by \cite[Lemma 5.5]{cdhdesc}, the map remains a Zariski
equivalence after group completion yielding a motivic equivalence
\[
\left(\coprod_{n \geq 0} B_{isoEt}O(\langle 1 \rangle^{\perp
    n})\right)^+ \rightarrow \Omega^\infty GW|\Sm{X,qp}^{C_2}.
\]

Fix a map $f : D \rightarrow X$ in $\Sch{S,qp}^{C_2}$. Again by 
\cite[Lemma 5.5]{cdhdesc}, since the pullback $f^* : \mc P(\Sch{X,qp}^{C_2})
\rightarrow \mc P(\Sch{X,qp}^{C_2})$ preserves finite products, it
commutes with group completion of $E_\infty$-monoids. The same is true
for $L_{\mathrm{mot}}$. It follows that there are motivic equivalences

\[
f^*(\Omega^\infty GW|\Sm{X}^{C_2})\rightarrow  f^*\left(\coprod_{n \geq 0} B_{isoEt}O(\langle 1 \rangle^{\perp
    n})\right)^+ \rightarrow \left(\coprod_{n \geq 0} f^*B_{isoEt}O(\langle 1 \rangle^{\perp
    n})\right)^+
\]

Because $B_{isoEt}O(\langle 1 \rangle^{\perp n})$ is representable by
the results of Section \ref{chap:herm_grass},
\cite [Proposition 2.9]{cdhdesc} yields a motivic equivalence
\[
\left(\coprod_{n \geq 0} f^*B_{isoEt}O(\langle 1 \rangle^{\perp
    n})\right)^+ \rightarrow \left(\coprod_{n \geq 0} B_{isoEt}f^*O(\langle 1 \rangle^{\perp
    n})\right)^+.
\]

But $f^*O(\langle 1 \rangle^{\perp n})|\Sm{X,qp}^{C_2} = O(\langle 1
\rangle^{\perp n})|\Sm{D,qp}^{C_2}$ since $f^*\langle 1 \rangle_X =
\langle 1 \rangle_D$. It follows that there's a motivic equivalence 
\[
 \left(\coprod_{n \geq 0} B_{isoEt}f^*O(\langle 1 \rangle^{\perp
    n})\right)^+ \rightarrow \Omega^\infty GW|\Sm{D,qp}^{C_2},
\]
and combining everything we get that the restriction map
\[
f^*(\Omega^\infty GW|\Sm{X,qp}^{C_2}) \rightarrow \Omega^\infty GW|\Sm{D,qp}^{C_2}
\]
is a motivic equivalence in the $\infty$-category of grouplike
$E_\infty$-monoids in $\mc P(\Sm{D,qp}^{C_2})$. Moving to the category of
connective spectra, It follows that pullback agrees with restriction for $GW_{\geq
  0}$. Because the localization
functor $QL_{\mathrm{mot}}$ is also compatible with the base change
$f^*$, it follows that each arrow 
\[
f^*(QL_{\mathrm{mot}}c_{\beta' \otimes (\beta^\sigma)'}GW_{\geq 0}|\Sm{X,qp}^{C_2}) \rightarrow
QL_{\mathrm{mot}}(f^*c_{\beta' \otimes (\beta^\sigma)'}GW_{\geq 0}|\Sm{X,qp}^{C_2})
\]
\[ 
  \rightarrow QL_{\mathrm{mot}}(c_{\beta' \otimes (\beta^\sigma)'}GW_{\geq 0}|\Sm{D,qp}^{C_2})
\]
is a motivic equivalence. Finally, Porism \ref{por:GW_per} tells us
that $c_{\beta' \otimes (\beta^\sigma)'}GW_{\geq 0}|\Sm{X,qp}^{C_2} \simeq \KR_X$,  so we've proved

\begin{theorem}\label{thm:cdh_desc}
Let $S$ be a Noetherian scheme of finite Krull dimension with an ample
family of line bundles and $\frac{1}{2} \in S$. Then the
homotopy Hermitian $K$-theory spectrum of rings with involution $L_{\mbb A^1}\mbb
GW$ satisfies descent for the equivariant cdh topology on $\Sch{S,qp}^{C_2}$.
\end{theorem}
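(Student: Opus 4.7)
My plan is to follow the recipe of Cisinski and Hoyois: given that $\KR$ is an $E_\infty$ motivic ring spectrum by Theorem \ref{thm:GW_Einfty}, and given that the six-functor formalism is available for $\SH^{C_2}(-)$ via \cite{HoyoisSixOp}, the equivariant analogue of \cite[Corollary 6.25]{HoyoisSixOp} reduces cdh descent for $\KR$ to the assertion that $\KR$ is a cocartesian section of $\SH^{C_2}(-)$ over $\Sch{S,qp}^{C_2}$. Concretely this means showing that for every morphism $f : D \to X$ in $\Sch{S,qp}^{C_2}$, the canonical comparison map $f^*(\KR_X) \to \KR_D$ is an equivalence in $\SH^{C_2}(D)$.

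To prove this base-change stability I would unwind $\KR$ back to $GW_{\geq 0}$. By Theorem \ref{thm:GW_Einfty} together with Porism \ref{por:GW_per}, $\KR$ is obtained from the presheaf $GW_{\geq 0}$ by applying the motivic localization $L_{\mathrm{mot}}$ and the periodization $Q$ with respect to $\beta' \otimes (\beta^\sigma)'$. Both $L_{\mathrm{mot}}$ and $Q$ (which is a filtered colimit) commute with the left adjoint $f^*$, so the problem reduces to showing that the restriction map $f^*(GW_{\geq 0}|\Sm{X,qp}^{C_2}) \to GW_{\geq 0}|\Sm{D,qp}^{C_2}$ is a motivic equivalence of presheaves of connective spectra. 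Under the equivalence between connective spectra and grouplike $E_\infty$-monoids, this in turn reduces to the analogous statement for $\Omega^\infty GW$.

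To establish that last equivalence I would use Theorem \ref{thm:RGr_GW_nonreg} together with the fact from \cite[Appendix A]{Schder} that on affine $C_2$-schemes $\mathrm{Herm}^+ \to \Omega^\infty GW$ is an equivalence; Zariski-sheafifying and group-completing then identifies $\Omega^\infty GW$ with the motivic space $(\coprod_{n \geq 0} B_{isoEt}O(\langle 1 \rangle^{\perp n}))^+$. Since $f^*$ and $L_{\mathrm{mot}}$ both preserve finite products, they commute with group completion of $E_\infty$-monoids by \cite[Lemma 5.5]{cdhdesc}, and since each classifying space $B_{isoEt}O(\langle 1 \rangle^{\perp n})$ is representable (after isovariant \'etale sheafification) one can apply \cite[Proposition 2.9]{cdhdesc} to commute $f^*$ past $B_{isoEt}$. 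Because the hyperbolic form $\langle 1 \rangle_X$ pulls back to $\langle 1 \rangle_D$, the resulting chain of motivic equivalences identifies $f^*(\Omega^\infty GW|\Sm{X,qp}^{C_2})$ with $\Omega^\infty GW|\Sm{D,qp}^{C_2}$; delooping and then applying $QL_{\mathrm{mot}}$ finishes the argument.

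The principal obstacle I anticipate is bookkeeping the interaction between $f^*$ and the representable isovariant \'etale classifying spaces, in particular ensuring that the pullback of the equivariant Zariski sheafification used to build $\mathrm{Herm}$ agrees with the corresponding construction over $D$. A secondary subtlety is verifying the equivariant version of Hoyois's reduction criterion in \cite[Corollary 6.25]{HoyoisSixOp}, which requires that $\SH^{C_2}(-)$ satisfies the appropriate smooth, proper, and localization base-change properties; once these are imported from \cite{HoyoisSixOp}, the passage from pullback stability of $\KR$ to cdh descent is formal.
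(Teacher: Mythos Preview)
Your proposal is correct and follows essentially the same approach as the paper: reduce cdh descent to base-change stability of $\KR$ via \cite[Corollary 6.25]{HoyoisSixOp}, then unwind $\KR$ to $GW_{\geq 0}$ using Porism \ref{por:GW_per}, identify $\Omega^\infty GW$ with $(\coprod_{n\geq 0} B_{isoEt}O(\langle 1\rangle^{\perp n}))^+$ via \cite[Appendix A]{Schder} and Zariski sheafification, and commute $f^*$ past each ingredient using \cite[Lemma 5.5, Proposition 2.9]{cdhdesc} and representability from Section \ref{chap:herm_grass}. The only quibble is terminological: $\langle 1\rangle$ is the rank-one diagonal form, not the hyperbolic form, but this does not affect the argument.
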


\bibliographystyle{alpha}

\end{document}